\begin{document}

\renewcommand{\theenumi}{\rm (\roman{enumi})}
\renewcommand{\labelenumi}{\rm \theenumi}

\newtheorem{thm}{Theorem}[section]
\newtheorem{defi}[thm]{Definition}
\newtheorem{lem}[thm]{Lemma}
\newtheorem{prop}[thm]{Proposition}
\newtheorem{cor}[thm]{Corollary}
\newtheorem{exam}[thm]{Example}
\newtheorem{conj}[thm]{Conjecture}
\newtheorem{rem}[thm]{Remark}
\allowdisplaybreaks

\title{H\"older continuity and bounds for fundamental solutions to non-divergence form parabolic equations}
\author{Seiichiro Kusuoka\footnote{e-mail: kusuoka@math.tohoku.ac.jp}
\vspace{0.1in}\\
Graduate School of Science, Tohoku University\\
6-3 Aramaki Aza-Aoba, Aoba-ku Sendai 980-8578 Japan}
\maketitle

\begin{abstract}
We consider the non-degenerate second-order parabolic partial differential equations of non-divergence form with bounded measurable coefficients (not necessary continuous). Under some assumptions it is known that the fundamental solution to the equations exists uniquely, has the Gaussian bounds and is locally H\"older continuous. In the present paper, we concern the Gaussian bounds and the lower bound of the index of the H\"older continuity with respect to the initial point. We use the pinned diffusion processes for the probabilistic representation of the fundamental solutions and apply the coupling method to obtain the regularity of them. Under some assumptions weaker than the H\"older continuity of the coefficients, we obtain the Gaussian bounds and the $(1-\varepsilon)$-H\"older continuity of the fundamental solution in the initial point.
\end{abstract}

{\bf 2010 AMS Classification Numbers:} 35B65, 35K10, 60H10, 60H30, 60J60.

 \vskip0.2cm

{\bf Key words:} parabolic partial differential equation, diffusion, fundamental solution, H\"older continuous, Gaussian estimate, stochastic differential equation, coupling method.

\section{Introduction and main result}\label{section:intro}

Let $a(t,x)=(a_{ij}(t,x))$ be a symmetric $d\times d$-matrix-valued bounded measurable function on $[0,\infty ) \times {\mathbb R}^d$ which is uniformly positive definite i.e.
\begin{equation}\label{ass:a1}
\Lambda ^{-1} I \leq a(t,x) \leq \Lambda I
\end{equation}
where $\Lambda$ is a positive constant and $I$ is the unit matrix.
Let $b(t,x)=(b_i(t,x))$ be an ${\mathbb R}^d$-valued bounded measurable function on $[0,\infty ) \times {\mathbb R}^d$, and $c(t,x)$ be a bounded measurable function on $[0,\infty ) \times {\mathbb R}^d$.
Consider the following parabolic partial differential equation:
\begin{equation}\label{PDE}
\left\{ \begin{array}{rl}
\displaystyle \frac{\partial}{\partial t} u(t,x) & \displaystyle = \frac 12 \sum _{i,j=1}^d a_{ij}(t,x)\frac{\partial ^2}{\partial x_i \partial x_j} u(t,x) +  \sum _{i=1}^d b_i(t,x) \frac{\partial }{\partial x_i} u(t,x) + c(t,x) u(t,x) \!\!\!\!\!\!\!\!\!\! \\[3mm]
\displaystyle u(0,x)& =f(x)\displaystyle 
\end{array}\right.
\end{equation}
Generally, the equation (\ref{PDE}) does not have the uniqueness of the solution.
We will assume the continuity of $a$ in spatial component uniformly in $t$, and it implies the uniqueness of the week solution (see \cite{StVa}.)
In the present paper,  we always consider the cases that the uniqueness of the weak solution holds. 
Denote
\[
L_t f(x) := \frac 12 \sum _{i,j=1}^d a_{ij}(t,x)\frac{\partial ^2}{\partial x_i \partial x_j} f(x)+  \sum _{i=1}^d b_i(t,x) \frac{\partial }{\partial x_i}f(x) + c(t,x) f(x) ,\quad f\in C^2_b ({\mathbb R}^d),
\]
and the fundamental solution to (\ref{PDE}) by $p(s,x;t,y)$, i.e. $p(s,x;t,y)$ is a measurable function defined for $s,t\in [0,\infty)$ such that $s<t$, and $x,y \in {\mathbb R}^d$, which satisfies
\begin{align*}
& \frac{\partial}{\partial t} \int _{{\mathbb R}^d} f(y)p(s,\cdot  ;t,y) dy = L_t \left( \int _{{\mathbb R}^d} f(y)p(s,\cdot ;t,y) dy \right) \\
& \lim _{r\downarrow s}  \int _{{\mathbb R}^d} f(y)p(s,\cdot  ;r,y) dy = f
\end{align*}
for $s,t\in [0,\infty)$ such that $s<t$, and a continuous function $f$ with a compact support.
In the present paper, we consider the existence and the regularity of $p(0,x;t,y)$.

The problem on the regularity of the fundamental solutions to the parabolic partial differential equations with bounded measurable coefficients has the long history.
The parabolic equations of the divergence form is more investigated than that of the non-divergence form, because the variational method is applicable to them.
The H\"older continuity of the fundamental solution to ${\partial _t} u = \nabla \cdot a\nabla u$ for a matrix-valued bounded measurable function $a$ with the ellipticity condition $\Lambda ^{-1} I \leq a \leq \Lambda I$ was originally obtained by De Giorgi \cite{DG} and Nash \cite{Na} independently.
Precisely speaking, in the results the $\alpha$-H\"older continuity of the fundamental solution with some positive number $\alpha \in (0,1]$ is obtained.
The index $\alpha$ depends on many constants appeared in the Harnack inequality and so on.
These results have been extended to the case of more general equations: ${\partial _t} u = \nabla \cdot a\nabla u + b\cdot \nabla u -cu$ where $b,c$ are bounded measurable (see \cite{Ar} or \cite{St}.)
The equations with unbounded coefficients are also studied (see e.g. \cite{MePaRh}, \cite{PrEi} and \cite{PrEi2}.)
An analogy to the case of a type of nonlocal generators ( the associated stochastic processes are called stable-like processes) is given by Chen and Kumagai \cite{ChKu}.
In the results above, the index of the H\"older continuity of the fundamental solution depends on many constants appeared in the estimates, and it is difficult to calculate the exact value of the index.
Moreover, even the lower bound of the index is difficult to be obtained.

The fundamental solutions to the parabolic equations of the non-divergence form with low-regular coefficients have been studied mainly in the case of H\"older continuous coefficients.
One of the most powerful tools for the problem is L\'evi's method, and it yields the existence, the uniqueness and the H\"older continuity of the fundamental solution (see \cite{Fr}, \cite{LSU} and Chapter I of \cite{PrEi}.)
Furthermore, an a priori estimate (so-called Schauder's estimate) is known for the solutions, and the twice continuous differentiability in $x$ of the fundamental solution $p(s,x;t,y)$ to (\ref{PDE}) is obtained (see e.g. \cite{LSU}, \cite{Kr}, \cite{BoKrRo} and \cite{BoRoSh}.)
We remark that all coefficients $a,b,c$ need to be H\"older continuous to apply Schauder's estimate.
However, even in the case that $a$ is the unit matrix, when $b$ is not continuous, we cannot expect the continuous differentiability of the fundamental solution (see Remark 5.2 of Chapter 6 in \cite{KaSh}.) 

In the present paper, we consider the Gaussian estimate and the lower bound of the index for the H\"older continuity in $x$ of the fundamental solution $p(s,x;t,y)$ to (\ref{PDE}) by probabilistic approach.

Now we give the assumptions.
Let $B(x,R)$ be the open ball in ${\mathbb R}^d$ centered at $x$ with radius $R$ for $x\in {\mathbb R}^d$ and $R>0$.
We assume
\begin{equation}\label{ass:a3}
\sum _{i,j=1}^d \mathop{\rm sup}_{t\in [0,\infty )} \int _{{\mathbb R}^d} \left| \frac{\partial}{\partial x_j} a_{ij}(s,x) \right| ^\theta e^{-m|x|}dx \leq M
\end{equation}
where the derivatives are in the weak sense, $\theta$ is a constant in $[d,\infty) \cap (2,\infty)$, $m$ and $M$ are nonnegative constants.
We also assume that the continuity of $a$ in spatial component uniformly in $t$, i.e. for any $R>0$ there exists a continuous and nondecreasing function $\rho _R$ on $[0,\infty )$ such that $\rho _R (0) =0$ and
\begin{equation} \label{ass:a2}
\sup _{t\in [0,\infty )} \sup _{i,j} \left| a_{ij}(t,x) - a_{ij}(t,y)\right| \leq \rho _R (|x-y|) ,\quad x,y\in B(0;R).
\end{equation}
We remark that under the assumptions (\ref{ass:a1}) and (\ref{ass:a2}), the concerned equation (\ref{PDE}) is well-posed (see Chapter 7 in \cite{StVa}.), and for fixed $s\in [0,\infty )$ the fundamental solution $p(s,\cdot ;t,\cdot )$ exists for almost all $t\in (s,\infty)$ (see Theorem 9.1.9 in \cite{StVa}.)
However, the fundamental solution does not always exist for all $t \in (s,\infty)$ under the assumptions (\ref{ass:a1}) and (\ref{ass:a2}) without (\ref{ass:a3}) (see \cite{FaKe}.)
We remark that under the assumptions (\ref{ass:a1}), (\ref{ass:a3}) and (\ref{ass:a2}), neither existence of the fundamental solutions nor examples that the fundamental solution does not exist are known.
In the case that $a$ does not depend on time $t$, the fundamental solution exists for all $t$ (see Theorem 9.2.6 in \cite{StVa}.)
We also remark that (\ref{ass:a3}) and (\ref{ass:a2}) do not imply the local H\"older continuity of $a$ in the spatial component.

Let $p^X(s,x;t,y)$ be the fundamental solution to the parabolic equation
\[
\frac{\partial}{\partial t} u(t,x) = \frac 12 \sum _{i,j=1}^d a_{ij}(t,x)\frac{\partial ^2}{\partial x_i \partial x_j} u(t,x)
\]
and let
\begin{equation}\label{eq:defLX}
L_t^X = \frac 12 \sum _{i,j=1}^d a_{ij}(t,x)\frac{\partial ^2}{\partial x_i \partial x_j}.
\end{equation}
Let $\{ a^{(n)}(t,x)\}$ be a sequence of symmetric $d\times d$-matrix-valued bounded measurable functions in $C_b^{0,\infty}([0,\infty)\times {\mathbb R}^d)$ such that $a^{(n)}(t,x)$ converges to $a(t,x)$ for each $(t,x)\in [0,\infty ) \times {\mathbb R}^d$.
We also assume that (\ref{ass:a1}), (\ref{ass:a3}) and (\ref{ass:a2}) hold for $a^{(n)}$ instead of $a$, with the same constants $m$, $M$, $\theta$, $R$, $\rho _R$ and $\Lambda$.
Denote the fundamental solution to the parabolic equation associated with the generator
\[
\frac 12 \sum _{i,j=1}^d a_{ij}^{(n)}(t,x)\frac{\partial ^2}{\partial x_i \partial x_j}
\]
by $p^{X,{(n)}}$.
We assume the uniform Gaussian estimate for the fundamental solutions to $p^{X,(n)}$, i.e. there exist positive constants $\gamma_{\rm G}^-$, $\gamma_{\rm G}^+$, $C_{\rm G}^-$ and $C_{\rm G}^+$ such that
\begin{equation}\label{eq:gaussest}
\frac{C_{\rm G}^-}{(t-s)^{\frac d2}} \exp \left( -\frac{\gamma_{\rm G}^- |x-y|^2}{t-s}\right) \leq p^{X,(n)} (s,x;t,y) \leq \frac{C_{\rm G}^+}{(t-s)^{\frac d2}} \exp \left( -\frac{\gamma_{\rm G}^+ |x-y|^2}{t-s}\right)
\end{equation}
for $s,t \in [0,\infty)$ such that $s<t$, $x,y\in {\mathbb R}^d$, and $n\in {\mathbb N}$.
Gaussian estimates for the fundamental solutions to parabolic equations of divergence forms have been well investigated (see \cite{Ar}, \cite{Kar}, \cite{PrEi} and \cite{PrEi2}.)
However, not many results are known in the case of non-divergence forms.
A sufficient condition for the Gaussian estimate by means of Dini's continuity condition is obtained by Porper and {\`E}{\u\i}del'man (see Theorem 19 in \cite{PrEi2}).
The result includes the case of H\"older continuous coefficients.
We remark that two-sided estimates similar to Gaussian estimate for the equations with general coefficients are obtained in \cite{Es}.

Now we state the main theorem of this paper.

\begin{thm}\label{thm:main}
Assume (\ref{ass:a1}), (\ref{ass:a3}), (\ref{ass:a2}) and (\ref{eq:gaussest}).
Then, there exist constants $C_1$, $C_2$, $\gamma _1$ and $\gamma _2$ depending on $d$, $\gamma_{\rm G}^-$, $\gamma_{\rm G}^+$, $C_{\rm G}^-$, $C_{\rm G}^+$, $m$, $M$, $\theta$, $\Lambda$, $\| b\| _\infty$ and $\| c\| _\infty$ such that
\[
\frac{C_1e^{-C_1 (t-s)}}{(t-s)^{\frac d2}} \exp \left( -\frac{\gamma_1 |x-y|^2}{t-s}\right) \leq p(s,x;t,y) \leq \frac{C_2e^{C_2 (t-s)}}{(t-s)^{\frac d2}} \exp \left( -\frac{\gamma_2 |x-y|^2}{t-s}\right)
\]
for $s,t \in [0,\infty)$ such that $s<t$, and $x,y\in {\mathbb R}^d$.
Moreover, for any $R>0$ and sufficiently small $\varepsilon >0$, there exists a constant $C$ depending on $d$, $\varepsilon$, $\gamma_{\rm G}^-$, $\gamma_{\rm G}^+$, $C_{\rm G}^-$, $C_{\rm G}^+$, $m$, $M$, $\theta$, $R$, $\rho _R$, $\Lambda$, $\| b\| _\infty$ and $\| c\| _\infty$ such that
\[
|p(0,x ;t, y) - p(0,z;t,y)| \leq C t^{-d/2-1} e^{Ct} |x-z|^{1-\varepsilon}
\]
for $t\in (0,\infty )$, $x,z \in B(0;R/2)$ and $y \in {\mathbb R}^d$.
\end{thm}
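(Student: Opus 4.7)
The plan is to combine the two ingredients advertised in the abstract: a pinned-diffusion (bridge) representation for $p(s,x;t,y)$, and a coupling of the diffusions started at $x$ and $z$. Let $\sigma^{(n)}$ denote a smooth symmetric square root of $a^{(n)}$, and let $X^{(n),x}$ be the strong solution of $dX^{(n),x}_t=\sigma^{(n)}(t,X^{(n),x}_t)\,dW_t+b(t,X^{(n),x}_t)\,dt$ with $X^{(n),x}_0=x$. Feynman--Kac gives
\[
\int f(y)\,p^{(n)}(s,x;t,y)\,dy = E\!\left[f(X^{(n),x}_t)\exp\!\Bigl(\int_s^t c(u,X^{(n),x}_u)\,du\Bigr)\right],
\]
and this is the object I would analyze, together with the analogous representation for the driftless, potential-free diffusion associated with $p^{X,(n)}$.

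For the Gaussian bounds I would factor $p^{(n)}$ through $p^{X,(n)}$ via the pinned-diffusion measure: writing $p^{(n)}(s,x;t,y) = p^{X,(n)}(s,x;t,y)\,E^{s,x;t,y}[Z]$, where $E^{s,x;t,y}$ is the expectation under the bridge of $X^{X,(n)}$ from $(s,x)$ to $(t,y)$ and $Z$ is the combined Girsanov--Feynman--Kac weight, the boundedness of $b,c$ and the uniform ellipticity (\ref{ass:a1}) bound $E^{s,x;t,y}[Z]$ between $e^{-C(t-s)}$ and $e^{C(t-s)}$, with $C$ depending only on $\Lambda,\|b\|_\infty,\|c\|_\infty$ (via Novikov's condition and Cauchy--Schwarz). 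Multiplying by (\ref{eq:gaussest}) yields the two-sided Gaussian estimate for $p^{(n)}$ uniformly in $n$, and passage to the limit $n\to\infty$ (which is legitimate since the martingale problem is well posed under (\ref{ass:a1}) and (\ref{ass:a2})) transfers the bound to $p$.

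For the Hölder continuity, fix $t>0$ and $s=t/2$. For each $n$ I would construct a coupling $(X^{(n),x},X^{(n),z})$ on a common Brownian probability space and let $\tau_n$ be the coupling time. The Markov property together with the Feynman--Kac representation gives
\[
p^{(n)}(0,x;t,y)-p^{(n)}(0,z;t,y) = E\!\left[\bigl\{p^{(n)}(s,X^{(n),x}_s;t,y)-p^{(n)}(s,X^{(n),z}_s;t,y)\bigr\}\mathbf{1}_{\{\tau_n>s\}}\right].
\]
The Gaussian upper bound from the previous step yields $\sup_\xi p^{(n)}(s,\xi;t,y)\le C(t-s)^{-d/2}e^{C(t-s)}$, so
\[
|p^{(n)}(0,x;t,y)-p^{(n)}(0,z;t,y)| \le 2C(t-s)^{-d/2}e^{C(t-s)}\,P(\tau_n>s),
\]
and it suffices to prove $P(\tau_n>s)\le C|x-z|^{1-\varepsilon}s^{-1/2}$ uniformly in $n$; with $s=t/2$ this produces the claimed prefactor $t^{-d/2-1}e^{Ct}$, after which a final limit $n\to\infty$ completes the argument.

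The principal obstacle is the coupling-time estimate. Since the derivatives of $a^{(n)}$ blow up with the mollification parameter, a reflection-type coupling in the anisotropic metric of $a^{(n)}$ produces an auxiliary drift involving $\nabla a^{(n)}$ which must be absorbed by a Girsanov change of measure. Controlling this Girsanov density uniformly in $n$ requires an $L^\theta$-estimate on $|\nabla a^{(n)}|$ against an exponentially decaying weight, which is exactly what assumption (\ref{ass:a3}) supplies with $\theta>2$; Hölder's inequality applied to this density against the tail of the coupling time of a standard non-degenerate diffusion is what forces the loss of $\varepsilon$ in the exponent, since the $L^p$-norm of the density explodes as $p\to\infty$. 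The localization to $x,z\in B(0;R/2)$, and the dependence on the modulus $\rho_R$, enter through a stopping-time argument that confines the coupled paths to a large ball on which the above Girsanov estimate is uniform. Combining localization, coupling, and passage to the limit yields the stated $(1-\varepsilon)$-Hölder bound.
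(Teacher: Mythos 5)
Your overall skeleton (bridge representation $p=p^X\cdot E^{\text{bridge}}[{\mathcal E}]$, reflection coupling, smooth approximation and a compactness limit) matches the paper, but the two places where you declare the work routine are precisely where the paper's actual effort lies, and your sketches there do not go through. First, the Gaussian bounds: you claim $e^{-C(t-s)}\le E^{s,x;t,y}[Z]\le e^{C(t-s)}$ ``via Novikov's condition and Cauchy--Schwarz''. Under the pinned measure $P^{X_t^x=y}$ the driving process $B$ is no longer a Brownian motion (the bridge carries the singular drift $a\nabla_\xi p^X/p^X$), so the exponential-supermartingale/Novikov argument you invoke applies only under $P$, not under the conditioned measure; this is exactly the difficulty the paper flags (``conditioning generates singularity''). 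The paper's route is Lemma \ref{lem:estE2}: apply It\^o's formula under the bridge, use the backward equation (\ref{eq:genp}) to cancel the $\partial_u p^X+L^X_u p^X$ terms, and control the surviving term involving $\nabla_\xi p^X/p^X$ along the bridge by the analytic weighted estimate of Lemma \ref{lem:estnablap} --- and that is where hypothesis (\ref{ass:a3}) (the weighted $L^\theta$ bound on $\nabla a$) is actually consumed. Your proposal never uses (\ref{ass:a3}) at this step, so the asserted two-sided bound on the bridge expectation is a genuine gap, not a Cauchy--Schwarz exercise.

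Second, the H\"older step. (i) You misattribute the role of (\ref{ass:a3}): the reflection coupling (\ref{SDEz}) produces no auxiliary drift involving $\nabla a^{(n)}$ and needs no Girsanov absorption; the coupling-time bound $E[t\wedge\tau]\le C(1+t^2)|x-z|^{1-\varepsilon}$ (Lemma \ref{lem:estprob1}) comes from It\^o's formula for $|X^x_t-Z^z_t|^{1-\varepsilon}$, where the drift of $|\xi_t|$ is bounded by $\rho_R(|\xi_t|)$ via (\ref{ass:a2})/(\ref{eq:sigma}) and is dominated by the negative $-\varepsilon|\xi_t|^{-1-\varepsilon}$ term after a localization in $B(0;R)$; the loss of $\varepsilon$ comes from this exponent, not from an $L^p$ blow-up of any Girsanov density, and no uniform-in-$n$ control of $\nabla a^{(n)}$ is needed here. (ii) Your difference identity is false as written: you couple diffusions carrying the drift $b$ and keep the weight $e^{\int_0^s c}$, yet claim the bracket vanishes on $\{\tau\le s\}$. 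Even though $X_s=Z_s$ after coupling, the Feynman--Kac (and, in the paper's formulation, Girsanov) weights accumulated on $[0,\tau]$ differ, so the contribution on $\{\tau\le s\}$ does not cancel; the paper handles exactly this via (\ref{eq:lemII01}) and Lemma \ref{lem:II}, showing $E[\sup_s|{\mathcal E}(0,\tau\wedge s)-1|^q]\le Ce^{Ct}|x-z|^{2/(q\vee 2)-\varepsilon}$ because $\tau$ is small. Moreover, by putting the merely measurable $b$ inside the coupled SDE you create problems the paper deliberately avoids (well-posedness of the coupled system, and a drift-difference term $\langle\xi_t/|\xi_t|,b(t,X^x_t)-b(t,Z^z_t)\rangle$ that does not vanish as $\xi_t\to0$); the paper couples the driftless diffusion and keeps $b,c$ in ${\mathcal E}$. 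Until the bridge-integrability lemma and the weight-difference estimate are supplied, the proposal does not prove either assertion of the theorem.
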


The first assertion of Theorem \ref{thm:main} is the Gaussian estimate for $p$.
The advantage of the result is obtaining the Gaussian estimate of the fundamental solution to the parabolic equation of non-divergence form without the continuity of $b$ and $c$.
Such a result seems difficult to be obtained from L\'evi's method.
The second assertion of Theorem \ref{thm:main} implies that $p(0,x;t,y)$ is $(1-\varepsilon)$-H\"older continuous in $x$, and this is a clear lower bound.
The approach in this paper is mainly probabilistic.
The key method to prove Theorem \ref{thm:main} is the coupling method introduced by Lindvall and Rogers \cite{LiRo}.
This method enables us to discuss the H\"older continuity of $p(0,x;t,y)$ in $x$ from the oscillation of the diffusion processes without the regularity of the coefficients.

If $a$ is uniformly continuous in the spatial component, our proof below follows without restriction on $x,z$ and the following corollary holds.
\begin{cor}
Assume (\ref{ass:a1}), (\ref{ass:a3}), (\ref{eq:gaussest}) and there exists a continuous and nondecreasing function $\rho$ on $[0,\infty )$ such that $\rho(0) =0$ and
\[
\sup _{t\in [0,\infty )} \sup _{i,j} \left| a_{ij}(t,x) - a_{ij}(t,y)\right| \leq \rho (|x-y|) ,\quad x,y\in {\mathbb R}^d.
\]
Then, for sufficiently small $\varepsilon >0$, there exists a constant $C$ such that
\[
|p(0,x ;t, y) - p(0,z;s,y)| \leq Ct^{-d/2-1} e^{Ct} |x-z|^{1-\varepsilon}
\]
for $t\in (0,\infty )$ and $x,y,z \in {\mathbb R}^d$.
\end{cor}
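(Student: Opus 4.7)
The plan is to inspect the proof of Theorem~\ref{thm:main} and verify that, once the modulus of continuity of $a$ is available on all of ${\mathbb R}^d$, every step goes through without the restriction $x,z \in B(0;R/2)$. In the main theorem the ball $B(0;R/2)$ is introduced solely because the quantitative control on the oscillation of $a$ is only available inside $B(0;R)$ through $\rho_R$; replacing $\rho_R$ by the global $\rho$ removes this localization.

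More concretely, the coupling method of Lindvall--Rogers represents $p(0,x;t,y) - p(0,z;t,y)$ in terms of the joint law of a coupled pair of diffusions $(X,Z)$ starting at $(x,z)$, and the modulus of continuity of $a$ enters only through estimates of the form $|a(s,X_s) - a(s,Z_s)| \leq \rho_R(|X_s - Z_s|)$, which in the proof of Theorem~\ref{thm:main} are valid only while both processes stay in $B(0;R)$. Under the global modulus $\rho$, the corresponding bound $|a(s,X_s) - a(s,Z_s)| \leq \rho(|X_s - Z_s|)$ holds unconditionally along every trajectory, so there is no longer any need to localize by exit times from $B(0;R)$ or to constrain the initial points. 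I would then rerun the pinned-diffusion / coupling argument with $\rho$ in place of $\rho_R$ throughout. The ellipticity (\ref{ass:a1}), the weak-derivative bound (\ref{ass:a3}) and the Gaussian bounds (\ref{eq:gaussest}) are all posed globally, and the smooth approximations $a^{(n)}$ inherit the global modulus by mollification, so the approximation and limiting procedures transfer verbatim. The resulting constant $C$ is then independent of $R$ and of the initial locations.

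The main (and rather minor) obstacle will be bookkeeping: one has to check that every appearance of $\rho_R$ in the proof of Theorem~\ref{thm:main} is indeed dominated by $\rho$, and that the auxiliary cut-offs used for justifying interchanges of limits or handling the weight $e^{-m|x|}$ in (\ref{ass:a3}) do not secretly rely on the ball restriction for a different reason. Since the proof of the main theorem already operates on all of ${\mathbb R}^d$ except at the single point where spatial oscillation of $a$ is controlled, this verification is straightforward and yields the stated estimate for arbitrary $x,y,z \in {\mathbb R}^d$.
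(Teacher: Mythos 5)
Your proposal is correct and is essentially the paper's own argument: the author disposes of this corollary with the single remark that when $a$ is uniformly continuous in space, the proof of Theorem \ref{thm:main} goes through without the restriction $x,z\in B(0;R/2)$, since the ball enters only through $\rho_R$ (via (\ref{eq:sigma}) and the coupling estimates in Lemma \ref{lem:estprob1}, where the exit-time localization from $B(0;R)$ becomes unnecessary once $\rho$ is global). Your bookkeeping remarks match this; nothing further is needed.
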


The assumption (\ref{eq:gaussest}) may seem strict.
However, as mentioned above, Porper and {\`E}{\u\i}del'man obtained the Gaussian estimate for the parabolic equations with coefficients which satisfy a version of Dini's continuity condition (see Theorem 19 in \cite{PrEi2}).
From this sufficient condition and Theorem \ref{thm:main}, we have the following corollary. 

\begin{cor}\label{cor:main}
Assume (\ref{ass:a1}), (\ref{ass:a3}), and there exists a continuous and nondecreasing function $\rho$ on $[0,\infty )$ such that $\rho(0) =0$,
\begin{align}
&\int _0^1 \frac 1{r_2}\left( \int _0^{r_2} \frac{\rho(r_1)}{r_1}dr_1 \right) dr_2 < \infty , \label{eq:Dini}\\
&\sup _{t\in [0,\infty )} \sup _{i,j} \left| a_{ij}(t,x) - a_{ij}(t,y)\right| \leq \rho (|x-y|) ,\quad x,y\in {\mathbb R}^d.
\end{align}
Then, for sufficiently small $\varepsilon >0$, there exists a constant $C$ such that
\[
|p(0,x ;t, y) - p(0,z;s,y)| \leq Ct^{-d/2-1} e^{Ct} |x-z|^{1-\varepsilon}
\]
for $t\in (0,\infty )$ and $x,y,z \in {\mathbb R}^d$.
\end{cor}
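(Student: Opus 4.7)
The plan is to reduce Corollary \ref{cor:main} to the unnumbered corollary that immediately follows Theorem \ref{thm:main}, which treats the case of globally uniform spatial continuity of $a$. That earlier corollary already gives the desired $(1-\varepsilon)$-H\"older estimate on all of ${\mathbb R}^d$, so it suffices to verify its hypotheses under the present assumptions. The only nontrivial check is the uniform Gaussian estimate (\ref{eq:gaussest}) for an approximating sequence $a^{(n)}$; the global modulus $\rho$ in the statement immediately implies (\ref{ass:a2}) with $\rho _R=\rho$ for every $R$, so that condition requires no extra work.

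To construct the approximating family I would mollify $a$ only in the spatial variable, setting
\[
a^{(n)}(t,x) := \int _{{\mathbb R}^d} a(t,x-z)\, \eta _n(z)\, dz ,
\]
where $\eta _n$ is a standard nonnegative smooth mollifier with $\int \eta _n =1$. Then $a^{(n)} \in C_b^{0,\infty}([0,\infty )\times {\mathbb R}^d)$, $a^{(n)} \to a$ pointwise, and (\ref{ass:a1}) is preserved because the integrand is a convex combination of uniformly positive definite matrices. The spatial modulus is preserved exactly, since
\[
|a^{(n)}(t,x) - a^{(n)}(t,y)| \leq \int _{{\mathbb R}^d} \rho(|x-y|)\, \eta _n(z)\, dz = \rho(|x-y|).
\]
For (\ref{ass:a3}) the weak derivatives satisfy $\partial _j a^{(n)}_{ij} = (\partial _j a_{ij}) * \eta _n$; bounding the convolution against the weight $e^{-m|x|}$ via $e^{-m|x|} \leq e^{m|z|}e^{-m|x-z|}$ and using that $\eta _n$ is compactly supported yields a uniform bound depending only on $m$, $M$, $\theta$ and the fixed mollifier (possibly after enlarging $M$). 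Thus the setup preceding Theorem \ref{thm:main} is matched.

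The central step is to invoke Theorem 19 of \cite{PrEi2} for each $a^{(n)}$. Under the Dini-type condition (\ref{eq:Dini}) it furnishes Gaussian upper and lower bounds for $p^{X,(n)}$ whose constants depend only on $d$, $\Lambda$, and the value of the Dini integral of the spatial modulus of continuity of the coefficients. Since the modulus of $a^{(n)}$ is dominated by $\rho$, that Dini integral is controlled uniformly in $n$ by the finite integral in (\ref{eq:Dini}). Consequently (\ref{eq:gaussest}) holds with constants independent of $n$, every hypothesis of the unnumbered corollary following Theorem \ref{thm:main} is in force, and that corollary produces exactly the claimed H\"older bound.

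The principal obstacle I anticipate is the careful reading of Theorem 19 of \cite{PrEi2}: one must verify that the constants there are genuinely quantitative functions of only $d$, $\Lambda$, and the Dini integral of the modulus of continuity, rather than of finer regularity data of the coefficients. Once this quantitative form is extracted from \cite{PrEi2}, the uniformity of (\ref{eq:gaussest}) along the mollified family is automatic, and Corollary \ref{cor:main} follows from Theorem \ref{thm:main} by the route above.
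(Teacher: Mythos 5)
Your proposal is correct and follows essentially the same route the paper intends: the paper derives Corollary \ref{cor:main} by observing that the Dini condition (\ref{eq:Dini}) lets Theorem 19 of \cite{PrEi2} supply the uniform Gaussian estimate (\ref{eq:gaussest}) for an approximating family, after which Theorem \ref{thm:main} (in its global form, the unnumbered corollary) gives the H\"older bound. Your explicit spatial mollification, with the checks that (\ref{ass:a1}), the modulus $\rho$, and (\ref{ass:a3}) (up to enlarging $M$ by a harmless factor) are preserved, simply fills in the construction of the sequence $a^{(n)}$ that the paper leaves implicit.
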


We remark that; for $\alpha \in (0,1]$ and a positive constant $C$, $\rho (r)= Cr^{\alpha}$ satisfies (\ref{eq:Dini}).
Furthermore, $\rho (r)= C \min\{ 1, (-\log r)^{-\alpha} \}$ satisfies (\ref{eq:Dini}) for $\alpha \in (2,\infty )$.
We also remark that the continuity of $b$ and $c$ are not assumed in Corollary \ref{cor:main}.

The organization of the paper is as follows.

In Section \ref{sec:representation}, we prepare the probabilistic representation of the fundamental solution of (\ref{PDE}).
It should be remarked that we consider the case that $a$ is smooth in Section \ref{sec:representation}-\ref{sec:x}, and the general case is concerned only in Section \ref{sec:proof}.
The representation enable us to consider the H\"older continuity of the fundamental solution by a probabilistic way, and actuary in Sections \ref{sec:x} we prove the constant appeared in the H\"older continuity of $p(0,x;t,y)$ in $x$ depends only on the suitable constants.
The representation is obtained by the Feynman-Kac formula and the Girsanov transformation, and in the end of this section $p(s,x;t,y)$ is represented by the functional of the pinned diffusion process.

In Section \ref{sec:estimate}, we prepare some estimates.
The target of this section is Lemma \ref{lem:estE2}, which is on the integrability of a functional of the pinned diffusion process.
Generally speaking, it is much harder to see the integrability with respect to conditional probability measures than the original probability measure.
In our case, conditioning generates singularity and this fact makes the estimate difficult.
To overcome the difficulty we prepare Lemma \ref{lem:estnablap}, which is an estimate of the derivative of $p(s,x;t,y)$.
The proof of this lemma is analytic, and (\ref{ass:a3}) is assumed for the lemma.
In this section, we also have the Gaussian estimate for $p(s,x;t,y)$.

In Section \ref{sec:x} we prove that the constant appeared in the $(1-\varepsilon )$-H\"older continuity of $p(0,x;t,y)$ in $x$ depends only on the suitable constants.
This section is the main part of our argument.
To show it, we apply the coupling method to diffusion processes.
By virtue of the coupling method, the continuity problem of the fundamental solution is reduced to the problem of the local behavior of the pinned diffusion processes.
To see the local behavior, (\ref{ass:a2}) is needed.
Finally by showing an estimate of the coupling time, we obtain the $(1-\varepsilon )$-H\"older continuity of $p(0,x;t,y)$ in $x$ and the suitable dependence of the constant appeared in the H\"older continuity.

In Section \ref{sec:proof}, we consider the case of general $a$ and prove Theorem \ref{thm:main}.
The approach is only approximating $a$ by smooth ones and using the result obtained in Section \ref{sec:x}.

Throughout this paper, we denote the inner product in the Euclidean space ${\mathbb R}^d$ by $\langle \cdot ,\cdot \rangle$ and all random variables are considered on a probability space $(\Omega ,{\mathscr F}, P)$.
We denote the expectation of random variables by $E[\, \cdot \, ]$ and the expectation on the event $A\in {\mathscr F}$ (i.e. $\int _A\, \cdot\, dP$) by $E[\, \cdot \, ; A]$.
We denote the smooth functions with bounded derivatives on $S$ by $C_b^\infty (S)$ and the smooth functions on $S$ with compact supports by $C_0^\infty (S)$.

\section{Probabilistic representation of the fundamental solution}\label{sec:representation}

In this section, we assume that $a_{ij}(t,x) \in C_b^{0,\infty}([0,\infty)\times {\mathbb R}^d)$.
Define a $d\times d$-matrix-valued function $\sigma (t,x)$ by the square root of $a(t,x)$.
Then, (\ref{ass:a1}) implies that $\sigma_{ij}(t,x) \in C_b^{0,\infty}([0,\infty)\times {\mathbb R}^d)$, $a(t,x)= \sigma (t,x) \sigma (t,x)^T$ and
\begin{equation}\label{eq:sigma}
\sup _{t\in [0,\infty )}\sup _{i,j} |\sigma _{ij}(t,x)-\sigma _{ij}(t,y)| \leq C\rho _R (|x-y|) ,\quad x,y\in B(0;R),
\end{equation}
where $C$ is a constant depending on $\Lambda$.
Note that (\ref{ass:a1}) implies
\[
\Lambda ^{-1/2} I \leq \sigma (t,x) \leq \Lambda ^{1/2} I.
\]
Consider the stochastic differential equation:
\begin{equation}\label{SDEX}
\left\{ \begin{array}{rl}
\displaystyle dX_t^x & \displaystyle = \sigma (t,X_t^x)dB_t \\
\displaystyle X_0^x& \displaystyle =x .
\end{array}\right.
\end{equation}
Lipschitz continuity of $\sigma$ implies that the existence of the solution and the pathwise uniqueness hold for (\ref{SDEX}).
Let $({\mathscr F}_t)$ be the $\sigma$-field generated by $(B_s; s\in [0,t])$.
Then, the pathwise uniqueness implies that the solution $X^x_t$ is ${\mathscr F}_t$-measurable.
All stopping times appearing in this paper are associated with $({\mathscr F}_t)$.
We remark that the generator of $(X_t^x)$ is given by (\ref{eq:defLX}), and therefore the transition probability density of  of $(X_t^x)$ coincides with the fundamental solution $p^X$ of the parabolic equation generated by $(L_t^X)$.
The smoothness of $\sigma$ implies the smoothness of $p^X(s,x;t,y)$ on $(0,\infty ) \times {\mathbb R}^d \times (0,\infty ) \times {\mathbb R}^d$ (see e.g. \cite{KuSt} for the probabilistic proof, or \cite{LM} for the analytic proof).

There is a relation between the fundamental solution and the generator, as follows.
Since $p^X$ is smooth, by the definition of $p^X$ we have
\begin{equation}\label{eq:genp01}
\frac{\partial}{\partial t} p^X(s,x;t,y) = [ L_t^X p^X(s,\cdot \, ;t,y) ](x)
\end{equation}
for $s,t\in [0,\infty)$ such that $s<t$, and $x,y \in {\mathbb R}^d$.
Let $(L_t^X)^*$ be the dual operator of $L_t^X$ on $L^2({\mathbb R}^d)$.
Define $T_{s,t}^X$ and $(T_{s,t}^X)^*$ by the semigroup generated by $L_t^X$ and $(L_t^X)^*$, respectively.
Since
\[
\int _{{\mathbb R}^d} \phi (x) (T_{s,t}^X \psi )(x) dx = \int _{{\mathbb R}^d} \psi (x) \left[ (T_{s,t}^X)^* \phi \right] (x) dx ,
\]
we have
\[
\int _{{\mathbb R}^d} \phi (x) \left( \int _{{\mathbb R}^d} \psi (y)p^X(s,x;t,y) dy \right) dx = \int _{{\mathbb R}^d} \psi (x) \left( \int _{{\mathbb R}^d} \phi (y) (p^X)^* (s,x;t,y) dy \right) dx
\]
where $(p^X)^* (s,x;t,y)$ is the fundamental solution associated with $(L_t^X)^*$.
Hence, it holds that
\[
p^X(s,x;t,y) = (p^X)^* (s,y;t,x)
\]
for $s,t\in (0,\infty )$ such that $s<t$, and $x,y \in {\mathbb R}^d$.
Differentiating both sides of this equation with respect to $t$, we obtain
\begin{equation}\label{eq:genp02}
\left[ L_t^X p^X(s,\cdot \, ;t,y) \right] (x) = \left[ (L_t^X)^* (p^X)^* (s,\cdot \, ;t,x) \right] (y) = \left[ (L_t^X)^* p^X(s,x;t,\cdot \, ) \right] (y)
\end{equation}
for $s,t\in [0,\infty)$ such that $s<t$, and $x,y \in {\mathbb R}^d$.
By the Chapman-Kolmogorov equation, it holds that for $s,t,u \in [0,\infty )$ such that $u<s<t$, and $x,y \in {\mathbb R}^d$
\[
p^X(u,x;t,y) = \int _{{\mathbb R}^d} p^X(u,x;s,\xi ) p^X(s,\xi ;t,y) d\xi.
\]
Differentiating both sides of this equation with respect to $s$, we have
\[
0 = \int _{{\mathbb R}^d} \left( \frac{\partial}{\partial s} p^X(u,x;s,\xi ) \right) p^X(s,\xi ;t,y) d\xi + \int _{{\mathbb R}^d} p^X(u,x;s,\xi ) \left( \frac{\partial}{\partial s} p^X(s,\xi ;t,y) \right) d\xi
\]
for $s,u \in [0,\infty )$ such that $u<s$, and $x,y \in {\mathbb R}^d$.
Since (\ref{eq:genp01}) and (\ref{eq:genp02}) imply
\[
\frac{\partial}{\partial s} p^X(u,x;s,\xi ) = [ L_s^X p^X(u,\cdot \, ;s,\xi ) ](x) = \left[ (L_s^X)^* p^X(u,x;s,\cdot \, ) \right] (\xi ),
\]
we have for $s,t,u \in [0,\infty )$ such that $u<s<t$, and $x,y \in {\mathbb R}^d$
\begin{align*}
\int _{{\mathbb R}^d} p^X(u,x;s,\xi ) \left( \frac{\partial}{\partial s} p^X(s,\xi ;t,y) \right) d\xi
&= - \int _{{\mathbb R}^d} \left[ (L_s^X)^* p^X(u,x;s,\cdot \, ) \right] (\xi ) p^X(s,\xi ;t,y) d\xi \\
&= - \int _{{\mathbb R}^d} p^X(u,x;s,\xi ) \left[ L_s^X p^X(s,\cdot \, ;t,y)\right] (\xi) d\xi.
\end{align*}
Noting that $p^X(u,x;s,\xi )$ converges to $\delta _x (\xi)$ as $u\uparrow s$ in the sense of Schwartz distributions, we obtain
\begin{equation}\label{eq:genp}
\frac{\partial}{\partial s} p^X(s,x ;t,y) = -\left[ L_s^X p^X(s,\cdot \, ;t,y)\right] (x)
\end{equation}
for $s,t\in (0,\infty)$ such that $s<t$, and $x,y \in {\mathbb R}^d$.

Next we study the probabilistic representation of $p(s,x;t,y)$ by $p^X(s,x;t,y)$.
By the Feynman-Kac formula (see e.g. Proposition 3.10 of Chapter VIII in \cite{ReYo}) and the Girsanov transformation (see e.g. Theorem 4.2 of Chapter IV in \cite{IW}), we have the following representation of $u(t,x)$ by $X_t^x$.
\begin{equation}\label{eq:uX}\begin{array}{l}
\displaystyle u(t,x)= E\left[ f(X_t^x)\exp \left( \int _0^t \langle b_\sigma (s,X_s^x), dB_s\rangle \right. \right. \\
\displaystyle \hspace{4.5cm} \left. \left. - \frac 12 \int _0^t |b_\sigma (s,X_s^x)|^2 ds + \int _0^t c(s,X_s^x)ds\right) \right]
\end{array}\end{equation}
where $b_\sigma (t,x):=\sigma (t,x)^{-1}b(t,x)$.
For $s\leq t$ and $x\in {\mathbb R}^d$ let
\[
{\mathcal E}(s,t; X^x) := \exp \left( \int _s^t \langle b_\sigma (u,X_u^x), dB_u\rangle - \frac 12 \int _s^t |b_\sigma (u,X_u^x)|^2 du + \int _s^t c(u,X_u^x)du\right) .
\]
Then, by the definition of the fundamental solution and (\ref{eq:uX}), we obtain the probabilistic representation of the fundamental solution:
\begin{equation}\label{eq:fundamental}
p(0,x;t,y) = p^X(0,x;t,y) E^{X^x_t =y}\left[ {\mathcal E}(0,t; X^x) \right]
\end{equation}
where $P^{X_t^x = y}$ is the conditional probability measure of $P$ on $X_t^x = y$ and $E^{X^x_t=y}[\, \cdot \, ]$ is the expectation with respect to $P^{X_t^x = y}$.
Hence, to see the regularity of $p(0,x;t,y)$ in $x$, it is sufficient to see the regularity of the function $x \mapsto p^X(0,x;t,y)E^{X^x_t =y}\left[ {\mathcal E}(0,t; X^x) \right]$.
We prove Theorem \ref{thm:main} by studying the regularity of the function.
The definition of ${\mathcal E}$ implies
\begin{equation}\label{eq:lemII01}
{\mathcal E}(0,t; X^x) - {\mathcal E}(\tau \wedge t ,t; X^x) = {\mathcal E}(\tau \wedge t,t; X^x)\left(  {\mathcal E}(0,\tau \wedge t; X^x) - 1 \right)
\end{equation}
for any stopping time $\tau$ and $t\in [0,\infty)$, and by It\^o's formula we have
\begin{equation}\label{eq:ItoE}\begin{array}{l}
\displaystyle {\mathcal E}(s,t; X^x) -1 \\
\displaystyle = \int _s^t {\mathcal E}(s,u; X^x) \langle b_\sigma (u,X_u^x), dB_u\rangle + \int _s^t {\mathcal E}(s,u; X^x) c(u,X_u^x)du
\end{array}\end{equation}
 for $s,t\in [0,\infty)$ such that $s\leq t$.
We use these equations in the proof.

Now we consider the diffusion process $X^x$ pinned at $y$ at time $t$.
Let $s,t\in [0,\infty)$ such that $s<t$, $x,y \in {\mathbb R}^d$ and $\varepsilon >0$.
By the Markov property of $X$, we have for $A\in {\mathscr F}_s$
\[
P\left( A\cap\{ X_t^x \in B(y;\varepsilon )\} \right) = \int _{B(y;\varepsilon )} \left( \int _{{\mathbb R}^d} p^X(s,\xi ;t,\xi ') P\left( A\cap \{ X_s^x \in d\xi \}\right) \right) d\xi ' .
\]
Hence, we obtain
\begin{equation}\label{eq:conditional}
P^{X_t^x = y} \left( A \right) = \frac{1}{p^X(0,x;t,y)} \int _{{\mathbb R}^d} p^X(s,\xi ;t,y) P\left( A\cap\{ X_s^x \in d\xi \} \right)
\end{equation}
for $s,t\in (0,\infty )$ such that $s<t$, $A\in {\mathscr F}_s$ and $x,y \in {\mathbb R}^d$.
This formula enables us to see the generator of the pinned diffusion process.
By It\^o's formula, (\ref{eq:genp}) and (\ref{eq:conditional}) we have for $f\in C_b^2({\mathbb R}^d)$, $s,t\in [0,\infty )$ such that $s<t$, and $x,y\in {\mathbb R}^d$, 
\begin{align*}
& p^X(0,x;t,y)E^{X^x_t=y} \left[ f(X^x_s)\right] - p^X(0,x;t,y)f(x)\\
& = E\left[ f(X^x_s)p^X(s,X^x_s; t,y)\right] -E\left[ f(X^x_0)p^X(0,X^x_0; t,y)\right] \\
& = E\left[ \int _0^s (L^X_uf)(X^x_u)p^X(u,X^x_u; t,y) du\right] + E\left[ \int _0^s f(X^x_u)\left. \left( \frac{\partial}{\partial u}p^X(u,\xi; t,y)\right) \right| _{\xi= X^x_u} du\right] \\
& \quad + E\left[ \int _0^s f(X^x_u) \left( L^X_u p^X(u,\cdot\, ; t,y)\right) (X^x_u) du\right] \\
& \quad + \frac 12 E\left[ \int _0^s \left\langle \sigma (u,X^x_u)^T \nabla f(X^x_u), \sigma (u,X^x_u)^T \left( \nabla p^X(u,\cdot\, ; t,y)\right) (X^x_u) \right\rangle du\right] \\
& = p^X(0,x;t,y) \int _0^s E^{X^x_t=y} \left[ (L^X_uf)(X^x_u)\right] du \\
& \quad + \frac 12 p^X(0,x;t,y) \int _0^s E^{X^x_t=y} \left[ \left\langle \nabla f(X^x_u), a(u,X^x_u) \frac{ \left( \nabla p^X(u,\cdot\, ; t,y)\right) (X^x_u)}{p^X(u,X^x_u; t,y)} \right\rangle \right] du .
\end{align*}
Hence, the generator of $X$ pinned at $y$ at time $t$ is
\[
\frac 12 \sum _{i,j=1}^d a_{ij}(s,x)\frac{\partial ^2}{\partial x_i \partial x_j} + \left\langle \frac 12 a(s,x) \frac{\nabla p^X(s,\cdot\, ; t,y)(x)}{p^X(s,x;t,y)} , \nabla \right\rangle
\]
for $s\in [0,t)$ and $x\in {\mathbb R}^d$.
Of course, the pinned Brownian motion is an example of pinned diffusion processes (see Example 8.5 of Chapter IV in \cite{IW}).

\section{Estimates}\label{sec:estimate}

In this section we prepare some estimates for the proof of the main theorem.
Assume that $a$ is smooth and set the notation as in Section \ref{sec:representation}.

\begin{lem}\label{lem:estnablap}
Let $t\in (0,\infty )$ and $\phi$ be a nonnegative continuous function on $(0,t) \times {\mathbb R}^d$ such that $\phi (\cdot ,x) \in W_{\rm loc}^{1,1}((0,t), ds)$ for $x\in {\mathbb R}^d$ and $\phi(s,\cdot ) \in W_{\rm loc}^{1,2}({\mathbb R}^d, dx)$ for $s\in (0,t)$.
Then, for $s_1,s_2 \in (0,t)$ such that $s_1\leq s_2$
\begin{align*}
&\int _{s_1}^{s_2} \int _{{\mathbb R}^d} \frac{\left\langle a(u,\xi)\nabla \! _\xi p^X (u,\xi ;t,y), \nabla \! _\xi p^X (u,\xi ;t,y) \right\rangle}{p^X (u,\xi ;t,y) ^2 } \phi (u,\xi )d\xi du\\
&\leq C(1+ |\log (t-s_1)| ) \int _{{\mathbb R}^d} \phi (s_1,\xi) d\xi + C(t-s_1)^{-1}\int _{{\mathbb R}^d} |y-\xi |^2 \phi (s_1,\xi ) d\xi \\
&\quad + C(1+ |\log (t-s_2)| ) \int _{{\mathbb R}^d} \phi (s_2,\xi ) d\xi + C(t-s_2)^{-1}\int _{{\mathbb R}^d} |y-\xi |^2 \phi (s_2,\xi ) d\xi \\
&\quad + C \int _{s_1}^{s_2} \int _{{\mathbb R}^d} \phi (u,\xi ) d\xi du + C \int _{s_1}^{s_2} \int _{{\rm supp}\phi} \frac{\left| \nabla \! _\xi \phi (u,\xi )\right| ^2}{\phi (u,\xi )}d\xi du\\
&\quad + C\int _{s_1}^{s_2} (1+ |\log (t-u)| ) \int _{{\mathbb R}^d} \left| \frac{\partial}{\partial u} \phi (u,\xi )\right| d\xi du \\
& \quad + C\int _{s_1}^{s_2} (t-u)^{-1}\int _{{\mathbb R}^d} |y-\xi |^2 \left| \frac{\partial}{\partial u} \phi (u,\xi )\right| d\xi du \\
& \quad + C \sum _{i,j=1}^d \int _0^t \int _{{\mathbb R}^d} \left| \frac{\partial}{\partial \xi _j} a_{ij}(u,\xi ) \right| ^2 \phi (u, \xi) d\xi du
\end{align*}
where $C$ is a constant depending on  $d$, $\gamma_{\rm G}^-$, $\gamma_{\rm G}^+$, $C_{\rm G}^-$, $C_{\rm G}^+$, and $\Lambda$, and ${\rm supp}\phi$ is the support of $\phi$.
\end{lem}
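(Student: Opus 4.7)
The plan is to derive the estimate from an integration-by-parts identity applied to the log-density $q(u,\xi):=-\log p^X(u,\xi;t,y)$. The key observation is that the quantity to be bounded is simply
\begin{equation*}
\frac{\langle a(u,\xi)\nabla_\xi p^X,\nabla_\xi p^X\rangle}{p^X(u,\xi;t,y)^2}=\langle a(u,\xi)\nabla_\xi q,\nabla_\xi q\rangle.
\end{equation*}
Combining the backward Kolmogorov equation (\ref{eq:genp}) with the algebraic identity $\partial^2_{ij}p^X/p^X=-\partial^2_{ij}q+\partial_iq\,\partial_jq$, a direct computation yields the pointwise identity
\begin{equation*}
\langle a\nabla q,\nabla q\rangle=2\partial_uq+\sum_{i,j}a_{ij}\partial^2_{ij}q,
\end{equation*}
which converts the Fisher-information-type integrand into a sum of a time derivative and a second-order space term.

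Multiplying by $\phi$ and integrating over $[s_1,s_2]\times\mathbb{R}^d$, I integrate the $\partial_uq$ contribution by parts in $u$ and integrate $\sum a_{ij}\partial^2_{ij}q$ by parts once in $\xi$, using symmetry of $a$, to obtain
\begin{align*}
\int_{s_1}^{s_2}\!\!\int\langle a\nabla q,\nabla q\rangle\phi\,d\xi du
&=2\Bigl[\int q\,\phi\,d\xi\Bigr]_{s_1}^{s_2}-2\int_{s_1}^{s_2}\!\!\int q\,\partial_u\phi\,d\xi du\\
&\quad-\int_{s_1}^{s_2}\!\!\int\sum_{i,j}(\partial_j a_{ij})\phi\,\partial_i q\,d\xi du-\int_{s_1}^{s_2}\!\!\int\sum_{i,j}a_{ij}\partial_j\phi\,\partial_i q\,d\xi du.
\end{align*}
The two-sided Gaussian bound (\ref{eq:gaussest}) furnishes the crucial pointwise estimate
\begin{equation*}
|q(u,\xi)|\leq C\Bigl(1+|\log(t-u)|+\tfrac{|y-\xi|^2}{t-u}\Bigr)
\end{equation*}
with $C$ depending only on $d,\gamma_{\rm G}^\pm,C_{\rm G}^\pm$. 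Inserting this bound into the boundary term $[\int q\phi]_{s_1}^{s_2}$ and into $\int\int q\,\partial_u\phi$ reproduces precisely the first four lines and the two $|\partial_u\phi|$-lines of the right-hand side of the lemma.

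For the two cross terms containing $\partial_iq$, I apply a weighted Cauchy--Schwarz inequality. Writing $\partial_j\phi=\sqrt{\phi}\cdot(\partial_j\phi/\sqrt{\phi})$ on $\mathrm{supp}\,\phi$ and using $|a_{ij}|\leq\Lambda$,
\begin{equation*}
\Bigl|\int\sum_{i,j}a_{ij}\partial_j\phi\,\partial_iq\,d\xi\Bigr|\leq\varepsilon\int\phi|\nabla q|^2\,d\xi+C_\varepsilon\int_{\mathrm{supp}\,\phi}\frac{|\nabla\phi|^2}{\phi}\,d\xi,
\end{equation*}
and similarly
\begin{equation*}
\Bigl|\int\sum_{i,j}(\partial_ja_{ij})\phi\,\partial_iq\,d\xi\Bigr|\leq\varepsilon\int\phi|\nabla q|^2\,d\xi+C_\varepsilon\sum_{i,j}\int\phi|\partial_ja_{ij}|^2\,d\xi.
\end{equation*}
By uniform ellipticity (\ref{ass:a1}), $\int\phi|\nabla q|^2\leq\Lambda\int\phi\langle a\nabla q,\nabla q\rangle$, so choosing $\varepsilon$ small (depending only on $\Lambda$) allows me to absorb both $\varepsilon$-terms into the left-hand side, delivering the last two lines of the lemma's bound.

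The main technical obstacle is making the two integrations by parts rigorous: $q$ blows up as $u\uparrow t$ (requiring $s_2<t$), while $|\nabla q|$ grows linearly in $|\xi|$ at infinity, yet $\phi$ is assumed only $W^{1,2}_{\mathrm{loc}}$ in $\xi$ and $W^{1,1}_{\mathrm{loc}}$ in $u$. I would handle this by first approximating $\phi$ by smooth functions with compact support in $\xi$, performing all computations classically there (which is legitimate since $a$, hence $p^X$, is smooth in this section), and then passing to the limit; the Gaussian decay of $p^X$ controls the spatial boundary contributions, and the local Sobolev hypotheses on $\phi$ ensure convergence of the non-boundary terms.
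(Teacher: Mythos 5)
Your proof is correct and is essentially the same as the paper's: rewriting the Fisher-information integrand via $q=-\log p^X$ and the backward equation (\ref{eq:genp}) yields, after the two integrations by parts, exactly the paper's identity (\ref{eq:a02}), and your subsequent steps (Gaussian bounds on $\log p^X$ for the boundary and $\partial_u\phi$ terms, weighted Cauchy--Schwarz with absorption of the small multiple of $\int\phi\langle a\nabla q,\nabla q\rangle$ into the left-hand side, and reduction to smooth compactly supported $\phi$ by approximation) mirror (\ref{eq:a03})--(\ref{eq:a05}).
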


\begin{rem}
If $\phi$ is a continuous function on ${\mathbb R}^d$, the Lebesgue measure of ${\rm supp} \phi \setminus \{x\in {\mathbb R}^d;\ \phi (x)>0\}$ is zero.
\end{rem}

{\it Proof of Lemma \ref{lem:estnablap}.}
It is sufficient to show the case that $\phi \in C_0^\infty ([0,t]\times {\mathbb R}^d)$, because the general case is obtained by approximation.
Let $u\in (0,t)$.
By the Leibniz rule, we have 
\begin{equation}\label{eq:a01}\begin{array}{l}
\displaystyle \frac{\partial}{\partial u} \int _{{\mathbb R}^d} \left( \log p^X (u,\xi ;t,y)\right) \phi (u,\xi ) d\xi \\[2mm]
\displaystyle = \int _{{\mathbb R}^d} \frac{\frac{\partial}{\partial u} p^X (u,\xi ;t,y)}{p^X (u,\xi ;t,y)} \phi (u,\xi ) d\xi + \int _{{\mathbb R}^d} \left( \log p^X (u,\xi ;t,y)\right) \frac{\partial}{\partial u} \phi (u,\xi ) d\xi .
\end{array}\end{equation}
The equality (\ref{eq:genp}) and the integration by parts imply
\begin{align*}
&\int _{{\mathbb R}^d} \frac{\frac{\partial}{\partial u} p^X (u,\xi ;t,y)}{p^X (u,\xi ;t,y)} \phi (u,\xi ) d\xi \\
&= -\frac 12 \sum _{i,j=1}^d \int _{{\mathbb R}^d} \frac{a_{ij}(u,\xi)\frac{\partial ^2}{\partial \xi_i \partial \xi_j} p^X (u,\xi ;t,y)}{p^X (u,\xi ;t,y)} \phi (u,\xi ) d\xi \\
&= -\frac 12 \sum _{i,j=1}^d \int _{{\mathbb R}^d} \frac{\frac{\partial}{ \partial \xi_i} \left( a_{ij}(u,\xi)\frac{\partial}{ \partial \xi_j}p^X (u,\xi ;t,y)\right) }{p^X (u,\xi ;t,y)} \phi (u,\xi ) d\xi \\
&\hspace{5cm} +  \frac 12 \sum _{i,j=1}^d \int _{{\mathbb R}^d} \frac{\left( \frac{\partial}{ \partial \xi_i} a_{ij}(u,\xi) \right) \frac{\partial}{ \partial \xi_j} p^X (u,\xi ;t,y)}{p^X (u,\xi ;t,y) } \phi (u,\xi ) d\xi \\
&= - \frac 12\int _{{\mathbb R}^d} \frac{\left\langle a(u,\xi )\nabla \! _\xi p^X (u,\xi ;t,y), \nabla \! _\xi p^X (u,\xi ;t,y) \right\rangle}{p^X (u,\xi ;t,y) ^2 } \phi (u,\xi ) d\xi \\
&\hspace{5cm} + \frac 12 \int _{{\mathbb R}^d} \left\langle \frac{a(u,\xi ) \nabla \! _\xi p^X (u,\xi ;t,y)}{p^X (u,\xi ;t,y)}, \nabla \! _\xi \phi (u,\xi ) \right\rangle d\xi \\
&\hspace{5cm} + \frac 12 \sum _{i,j=1}^d \int _{{\mathbb R}^d} \frac{\left( \frac{\partial}{ \partial \xi_i} a_{ij}(u,\xi) \right) \frac{\partial}{ \partial \xi_j} p^X (u,\xi ;t,y)}{p^X (u,\xi ;t,y) } \phi (u,\xi ) d\xi.
\end{align*}
Hence, by (\ref{eq:a01}) we have
\begin{align*}
& \frac{\partial}{\partial u} \int _{{\mathbb R}^d} \left( \log p^X (u,\xi ;t,y)\right) \phi (u,\xi ) d\xi \\
& = -\frac 12 \int _{{\mathbb R}^d} \frac{\left\langle a(u,\xi )\nabla \! _\xi p^X (u,\xi ;t,y), \nabla \! _\xi p^X (u,\xi ;t,y) \right\rangle}{p^X (u,\xi ;t,y) ^2 } \phi (u,\xi ) d\xi \\
& \quad + \frac 12 \int _{{\mathbb R}^d} \left\langle \frac{a(u,\xi ) \nabla \! _\xi p^X (u,\xi ;t,y)}{p^X (u,\xi ;t,y)}, \nabla \! _\xi \phi (u,\xi ) \right\rangle d\xi \\
& \quad + \frac 12 \sum _{i,j=1}^d \int _{{\mathbb R}^d} \frac{\left( \frac{\partial}{ \partial \xi_i} a_{ij}(u,\xi) \right) \frac{\partial}{ \partial \xi_j} p^X (u,\xi ;t,y)}{p^X (u,\xi ;t,y) } \phi (u,\xi ) d\xi \\
& \quad + \int _{{\mathbb R}^d} \left( \log p^X (u,\xi ;t,y)\right) \frac{\partial}{\partial u} \phi (u,\xi ) d\xi .
\end{align*}
Integrating the both sides from $s_1$ to $s_2$ with respect to $u$, we obtain
\begin{equation}\label{eq:a02}\begin{array}{l}
\displaystyle \frac 12 \int _{s_1}^{s_2} \int _{{\mathbb R}^d} \frac{\left\langle a(u,\xi )\nabla \! _\xi p^X (u,\xi ;t,y), \nabla \! _\xi p^X (u,\xi ;t,y) \right\rangle}{p^X (u,\xi ;t,y) ^2 } \phi (u,\xi ) d\xi du\\[4mm]
\displaystyle = \int _{{\mathbb R}^d} \left( \log p^X (s_1,\xi ;t,y)\right) \phi (s_1,\xi ) d\xi - \int _{{\mathbb R}^d} \left( \log p^X (s_2,\xi ;t,y)\right) \phi (s_2,\xi ) d\xi \\[3mm]
\displaystyle \quad + \frac 12 \int _{s_1}^{s_2} \int _{{\mathbb R}^d} \left\langle \frac{a(u,\xi ) \nabla \! _\xi p^X (u,\xi ;t,y)}{p^X (u,\xi ;t,y)}, \nabla \! _\xi \phi (u,\xi ) \right\rangle d\xi du\\[3mm]
\displaystyle \quad +  \frac 12 \sum _{i,j=1}^d  \int _{s_1}^{s_2} \int _{{\mathbb R}^d} \frac{\left( \frac{\partial}{ \partial \xi_i} a_{ij}(u,\xi) \right) \frac{\partial}{ \partial \xi_j} p^X (u,\xi ;t,y)}{p^X (u,\xi ;t,y) } \phi (u,\xi ) d\xi  du \\[5mm]
\displaystyle \quad + \int _{s_1}^{s_2} \int _{{\mathbb R}^d} \left( \log p^X (u,\xi ;t,y)\right) \frac{\partial}{\partial u} \phi (u,\xi ) d\xi du.
\end{array}\end{equation}
Now we consider the estimates of the terms in the right-hand side of this equation.
By (\ref{eq:gaussest}) we have for $s\in (0,t)$
\begin{align*}
& \left| \int _{{\mathbb R}^d} \left( \log p^X (s,\xi ;t,y)\right) \phi (s,\xi ) d\xi \right| \\
& \leq \int _{{\mathbb R}^d} \left( |\log C_{\rm G}^+| +  |\log C_{\rm G}^-| + \frac d2 |\log (t-s)| + \frac{\gamma_{\rm G}^- |y-\xi |^2}{t-s} \right) \phi (s,\xi ) d\xi .
\end{align*}
Hence, there exists a constant $C$ depending on $d$, $\gamma_{\rm G}^-$, $\gamma_{\rm G}^+$, $C_{\rm G}^-$, $C_{\rm G}^+$, and $\Lambda$ such that for $s\in (0,t)$
\begin{equation}\label{eq:a03}\begin{array}{l}
\displaystyle \left| \int _{{\mathbb R}^d} \left( \log p^X (s,\xi ;t,y)\right) \phi (s,\xi ) d\xi \right| \\
\displaystyle \leq C(1+ |\log (t-s)| ) \int _{{\mathbb R}^d} \phi (s,\xi ) d\xi + C(t-s)^{-1}\int _{{\mathbb R}^d} |y-\xi |^2 \phi (s,\xi ) d\xi .
\end{array}\end{equation}
The third term of the right-hand side of (\ref{eq:a02}) is estimated as follows:
\begin{equation}\label{eq:a04}\begin{array}{l}
\displaystyle \frac 12 \left| \int _{s_1}^{s_2} \int _{{\mathbb R}^d} \left\langle \frac{a(u,\xi ) \nabla \! _z p^X (u,\xi ;t,y)}{p^X (u,\xi ;t,y)}, \nabla \! _\xi \phi (u,\xi) \right\rangle d\xi du\right| \\[5mm]
\displaystyle \hspace{1cm} \leq \frac 18 \int _{s_1}^{s_2} \int _{{\mathbb R}^d} \frac{\left\langle a(u,\xi )\nabla \! _\xi p^X (u,\xi ;t,y), \nabla \! _\xi p^X (u,\xi ;t,y) \right\rangle}{p^X (u,\xi ;t,y) ^2 } \phi (u,\xi )d\xi du \\[3mm]
\displaystyle \hspace{1cm} \quad + 8\int _{s_1}^{s_2} \int _{{\rm supp}\phi} \frac{\left\langle a(u,\xi ) \nabla \! _\xi \phi (u,\xi ), \nabla \! _\xi \phi (u,\xi )\right\rangle}{\phi (u,\xi )}d\xi du.
\end{array}\end{equation}
To estimate the fourth term of the right-hand side of (\ref{eq:a02}), deduce
\begin{align*}
&\frac 12\left|  \sum _{i,j=1}^d  \int _{s_1}^{s_2} \int _{{\mathbb R}^d} \frac{\left( \frac{\partial}{ \partial \xi_i} a_{ij}(u,\xi) \right) \frac{\partial}{ \partial \xi_j} p^X (u,\xi ;t,y)}{p^X (u,\xi ;t,y) } \phi (u,\xi ) d\xi  du \right| \\
&\leq \frac 1{8\Lambda} \sum _{j=1}^d  \int _{s_1}^{s_2} \int _{{\mathbb R}^d} \frac{\left| \frac{\partial}{ \partial \xi_j} p^X (u,\xi ;t,y)\right| ^2}{p^X (u,\xi ;t,y)^2} \phi (u,\xi ) d\xi  du \\
&\quad + 8d \Lambda \sum _{i,j=1}^d \int _{s_1}^{s_2} \int _{{\mathbb R}^d} \left| \frac{\partial}{\partial \xi _j} a_{ij}(u,\xi )\right| ^2 \phi (u,\xi ) d\xi du.
\end{align*}
Hence, by (\ref{ass:a1}) we have
\begin{equation}\label{eq:a04-2}\begin{array}{l}
\displaystyle \frac 12\left|  \sum _{i,j=1}^d  \int _{s_1}^{s_2} \int _{{\mathbb R}^d} \frac{\left( \frac{\partial}{ \partial \xi_i} a_{ij}(t,\xi) \right) \frac{\partial}{ \partial \xi_j} p^X (u,\xi ;t,y)}{p^X (u,\xi ;t,y) } \phi (u,\xi ) d\xi  du \right| \\
\displaystyle \leq \frac 18 \int _{s_1}^{s_2} \int _{{\mathbb R}^d} \frac{\left\langle a(u,\xi )\nabla \! _\xi p^X (u,\xi ;t,y), \nabla \! _\xi p^X (u,\xi ;t,y) \right\rangle}{p^X (u,\xi ;t,y) ^2 } \phi (u,\xi ) d\xi  du \\
\displaystyle \quad + C \sum _{i,j=1}^d \int _{s_1}^{s_2} \int _{{\mathbb R}^d} \left| \frac{\partial}{\partial \xi _j} a_{ij}(u,\xi )\right| ^2 \phi (u,\xi ) d\xi du
\end{array}\end{equation}
where  $C$ depending on $d$, $\gamma_{\rm G}^-$, $\gamma_{\rm G}^+$, $C_{\rm G}^-$, $C_{\rm G}^+$ and $\Lambda$.
By using (\ref{eq:gaussest}), we calculate the final term of the right-hand side of (\ref{eq:a02}) as follows:
\begin{align*}
& \left| \int _{s_1}^{s_2} \int _{{\mathbb R}^d} \left( \log p^X (u,\xi ;t,y)\right) \frac{\partial}{\partial u} \phi (u,\xi ) d\xi du\right| \\
& \leq \int _{s_1}^{s_2} \int _{{\mathbb R}^d}  \left( |\log C_{\rm G}^+| +  |\log C_{\rm G}^-| + \frac d2 |\log (t-u)| + \frac{\gamma_{\rm G}^- |y-\xi |^2}{t-u} \right) \left| \frac{\partial}{\partial u} \phi (u,\xi )\right| d\xi du.
\end{align*}
Hence, there exists a constant $C$ depending on $d$, $\gamma_{\rm G}^-$, $\gamma_{\rm G}^+$, $C_{\rm G}^-$, $C_{\rm G}^+$ and $\Lambda$ such that
\begin{equation}\label{eq:a05}\begin{array}{l}
\displaystyle \left| \int _{{\mathbb R}^d} \left( \log p^X (s,\xi ;t,y)\right) \phi (s_1,\xi ) d\xi \right| \\
\displaystyle \leq C\int _{s_1}^{s_2} (1+ |\log (t-u)| ) \int _{{\mathbb R}^d} \left| \frac{\partial}{\partial u} \phi (u,\xi )\right| d\xi du \\
\displaystyle \quad + C\int _{s_1}^{s_2} (t-u)^{-1}\int _{{\mathbb R}^d} |y-\xi |^2 \left| \frac{\partial}{\partial u} \phi (u,\xi )\right| d\xi du.
\end{array}\end{equation}
Therefore, by (\ref{eq:a02}),  (\ref{eq:a03}),  (\ref{eq:a04}),  (\ref{eq:a04-2}) and  (\ref{eq:a05}) we obtain the assertion.
\hfill \fbox{}

\begin{lem}\label{lem:estE0}
Let $\tau _1 , \tau _2$ be stopping times such that $0 \leq \tau _1 \leq \tau _2$ almost surely.
It holds that for any $q\in {\mathbb R}$
\[
E\left[ {\mathcal E}(t\wedge \tau _1 , t\wedge \tau _2 ; X^x) ^q \right] \leq e^{C(1+q^2)t}, \ t>0,\ x,y\in {\mathbb R}^d,
\]
where $C$ is a constant depending on $d$, $\Lambda$, $\| b\| _\infty$ and $\| c\|_\infty$.
\end{lem}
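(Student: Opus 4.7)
The plan is to split the stopped exponential $\mathcal{E}(t\wedge\tau_1,t\wedge\tau_2;X^x)^q$ into the product of a true Dol\'eans--Dade exponential martingale (of mean $1$) and a deterministic factor whose pointwise bound has the required form. Observe first that $b_\sigma = \sigma^{-1}b$ is uniformly bounded: the ellipticity (\ref{ass:a1}) gives $\sigma^{-1}\leq \Lambda^{1/2}I$, so $|b_\sigma|\leq \Lambda^{1/2}\|b\|_\infty$. I would define the continuous local martingale
\[
N_u := q\int_0^u \mathbf{1}_{(t\wedge\tau_1,\,t\wedge\tau_2]}(v)\,\langle b_\sigma(v,X_v^x),dB_v\rangle, \qquad u\in[0,t],
\]
so that $N_t = q\int_{t\wedge\tau_1}^{t\wedge\tau_2}\langle b_\sigma(v,X_v^x),dB_v\rangle$ and $\langle N\rangle_t \leq q^2 \Lambda\|b\|_\infty^2\, t$ uniformly. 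Novikov's criterion then applies and $\mathcal{M}_u := \exp\bigl(N_u - \tfrac12\langle N\rangle_u\bigr)$ is a genuine martingale on $[0,t]$ with $E[\mathcal{M}_t]=1$.

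The second step is the algebraic bookkeeping that exposes the mismatch between the factor $\tfrac{q}{2}$ appearing in $\mathcal{E}$ and the $\tfrac{q^2}{2}$ needed for the Dol\'eans--Dade exponential. Writing
\[
-\tfrac{q}{2}\int_{t\wedge\tau_1}^{t\wedge\tau_2}|b_\sigma|^2\,du \;=\; -\tfrac{q^2}{2}\int_{t\wedge\tau_1}^{t\wedge\tau_2}|b_\sigma|^2\,du \;+\; \tfrac{q^2-q}{2}\int_{t\wedge\tau_1}^{t\wedge\tau_2}|b_\sigma|^2\,du,
\]
one obtains the factorisation
\[
\mathcal{E}(t\wedge\tau_1,t\wedge\tau_2;X^x)^q \;=\; \mathcal{M}_t\cdot\exp\!\Bigl(\tfrac{q^2-q}{2}\!\int_{t\wedge\tau_1}^{t\wedge\tau_2}\!|b_\sigma(u,X^x_u)|^2\,du \;+\; q\!\int_{t\wedge\tau_1}^{t\wedge\tau_2}\! c(u,X^x_u)\,du\Bigr).
\]
Using the elementary inequality $|q|\leq\tfrac12(1+q^2)$ (hence also $\tfrac12|q^2-q|\leq 1+q^2$) together with the $L^\infty$ bounds on $b_\sigma$ and $c$, the second factor is deterministically dominated by $e^{C(1+q^2)t}$ for a constant $C$ depending only on $\Lambda$, $\|b\|_\infty$ and $\|c\|_\infty$. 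Taking expectations then yields $E[\mathcal{E}^q]\leq e^{C(1+q^2)t}\,E[\mathcal{M}_t]=e^{C(1+q^2)t}$.

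There is no genuine obstacle here; the only point requiring some care is the identification of the stopping-time-sandwiched stochastic integral $\int_{t\wedge\tau_1}^{t\wedge\tau_2}\langle b_\sigma,dB\rangle$ with the single integral $N_t$ against the full Brownian motion on $[0,t]$, which is precisely the content on which the applicability of Novikov relies. Since the integrand $\mathbf{1}_{(t\wedge\tau_1,t\wedge\tau_2]}$ is bounded and predictable, this step is standard and requires no additional hypothesis beyond (\ref{ass:a1}) and the boundedness of $b$ and $c$.
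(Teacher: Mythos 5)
Your proof is correct, but it follows a genuinely different route from the paper's. You absorb the stochastic integral over the random interval into a single Dol\'eans--Dade exponential $\mathcal{M}_u=\exp\bigl(N_u-\tfrac12\langle N\rangle_u\bigr)$ built from the bounded predictable integrand $q\,\mathbf{1}_{(t\wedge\tau_1,\,t\wedge\tau_2]}(v)\,b_\sigma(v,X^x_v)$; Novikov (or simply the fact that a nonnegative local martingale started at $1$ is a supermartingale) gives $E[\mathcal{M}_t]\le 1$, and the residual factor $\exp\bigl(\tfrac{q^2-q}{2}\int_{t\wedge\tau_1}^{t\wedge\tau_2}|b_\sigma|^2\,du+q\int_{t\wedge\tau_1}^{t\wedge\tau_2}c\,du\bigr)$ --- which, to be precise, is random rather than deterministic, but is dominated pathwise by a deterministic bound, as you correctly use --- is at most $e^{C(1+q^2)t}$ with $C$ depending only on $\Lambda$, $\|b\|_\infty$, $\|c\|_\infty$, so no H\"older step is needed. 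The paper instead keeps the two stopped integrals $\int_0^{t\wedge\tau_2}$ and $\int_0^{t\wedge\tau_1}$ separate, factors $\mathcal{E}^q$ into a pathwise-bounded term times two exponential processes, applies H\"older's inequality with exponents $(2,2)$, and bounds each of the two expectations by $1$ via Doob's optional sampling theorem and the supermartingale property of exponential (local) martingales; that version avoids integrating against the indicator of a stochastic interval, at the cost of the extra H\"older step and somewhat less transparent constants. Both arguments ultimately rest on the same ingredients --- $|b_\sigma|\le\Lambda^{1/2}\|b\|_\infty$, $|c|\le\|c\|_\infty$, and an exponential-martingale expectation bounded by $1$ --- and the only point in your version requiring care is the one you flag: $\mathbf{1}_{(t\wedge\tau_1,\,t\wedge\tau_2]}$ is predictable (indicator of a stochastic interval between stopping times), so $N_t$ indeed equals the difference of the stopped integrals appearing in $\mathcal{E}$.
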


\begin{proof}
Since for $t\in [0,\infty )$
\begin{align*}
&{\mathcal E}(t\wedge \tau _1 ,t\wedge \tau _2 ; X^x)\\
&= \exp \left( \int _{t\wedge \tau _1}^{t\wedge \tau _2} \langle b_\sigma (u,X_u^x), dB_u\rangle - \frac 12 \int _{t\wedge \tau _1}^{t\wedge \tau _2} |b_\sigma (u,X_u^x)|^2 du + \int _{t\wedge \tau _1}^{t\wedge \tau _2} c(u,X_u^x)du\right)\\
&= \exp \left( \int _0^{t\wedge \tau _2} \langle b_\sigma \left( s,X_u^x\right) ,dB_u\rangle - \frac {q}{16} \int _0^{t\wedge \tau _2} |b_\sigma (u,X_u^x)|^2 du \right) \\
& \quad \times \exp \left( -\int _0^{t\wedge \tau _1} \langle b_\sigma \left( s,X_u^x\right) ,dB_u\rangle - \frac {q}{16} \int _0^{t\wedge \tau _1} |b_\sigma (u,X_u^x)|^2 du \right) \\
& \quad \times \exp \left( \frac{q}{16} \int _0^{t\wedge \tau _2} |b_\sigma (u,X_u^x)|^2 du + \frac{q}{16} \int _0^{t\wedge \tau _1} |b_\sigma (u,X_u^x)|^2 du + \int _{t\wedge \tau _1}^{t\wedge \tau _2} c(u,X_u^x)du \right),
\end{align*}
by H\"older's inequality we have
\begin{equation}\label{eq:estE04}\begin{array}{l}
\displaystyle E\left[{\mathcal E}(s\wedge \tau _1 ,s\wedge \tau _2 ; X^x) ^q \right] \\[2mm]
\displaystyle \leq e^{(\|c\| _\infty + \| b_\sigma \| _\infty ^2 ) (1+q^2) t}\\
\displaystyle \quad \times E\left[ \exp \left( 2q \int _0^{t\wedge \tau _2} \langle b_\sigma \left( u,X_u^x\right) ,dB_u\rangle - \frac {q^2}8 \int _0^{t\wedge \tau _2} |b_\sigma (u,X_u^x )|^2 du \right) \right] ^{1/2}\\[3mm]
\displaystyle \quad \times E\left[ \exp \left( -2q \int _0^{t\wedge \tau _1} \langle b_\sigma \left( u,X_u^x\right) ,dB_u\rangle - \frac {q^2}8 \int _0^{t\wedge \tau _1} |b_\sigma (u,X_u^x )|^2 du \right) \right]  ^{1/2}.
\end{array}\end{equation}
On the other hand, Doob's optimal sampling theorem (see Theorem 6.1 of Chapter I in \cite{IW}) and Theorem 5.2 of Chapter III in \cite{IW} imply that
\[
\exp \left( -2q \int _0^{s\wedge \tau _1} \langle b_\sigma \left( u,X_u^x\right) ,dB_u\rangle - \frac {q^2}8 \int _0^{s\wedge \tau _1} |b_\sigma (u,X_u^x )|^2 du \right)
\]
and
\[
\exp \left( 2q \int _0^{s\wedge \tau _2} \langle b_\sigma \left( u,X_u^x\right) ,dB_u\rangle - \frac {q^2}8 \int _0^{s\wedge \tau _2} |b_\sigma (u,X_u^x )|^2 du \right)
\]
are supermartingales in $s$.
These imply that
\[
E\left[ \exp \left( -2q \int _0^{t\wedge \tau _1} \langle b_\sigma \left( u,X_u^x\right) ,dB_u\rangle - \frac {q^2}8 \int _0^{t\wedge \tau _1} |b_\sigma (u,X_u^x )|^2 du \right) \right] \leq 1,
\]
and
\[
E\left[ \exp \left( 2q \int _0^{t\wedge \tau _2} \langle b_\sigma \left( u,X_u^x\right) ,dB_u\rangle - \frac {q^2}8 \int _0^{t\wedge \tau _2} |b_\sigma (u,X_u^x )|^2 du \right) \right] \leq 1.
\]
Therefore, from (\ref{eq:estE04}) we obtain the desired estimate.
\end{proof}

\begin{lem}\label{lem:estE}
Let $\tau _1 , \tau _2$ be stopping times such that $0 \leq \tau _1 \leq \tau _2 \leq t$ almost surely.
It holds that
\begin{align*}
& p^X(0,x;t,y)E^{X_t^x=y}\left[ \int _0^t {\mathcal E}(u\wedge \tau _1 , u\wedge \tau _2 ; X^x) ^q du \right] \\
& \leq Ct^{-d/2 +1-\varepsilon} e^{C(1+q^2)t} \exp \left( -\frac{\gamma |x-y|^2}{t}\right)
\end{align*}
for $t\in (0,\infty )$, $x,y\in {\mathbb R}^d$, $q\in {\mathbb R}$ and sufficiently small $\varepsilon >0$, where $C$ and $\gamma$ are positive constants depending on  $d$, $\varepsilon$, $\gamma_{\rm G}^+$, $C_{\rm G}^+$, $\Lambda$, $\| b\| _\infty$ and $\| c\|_\infty$.
\end{lem}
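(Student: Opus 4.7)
The plan is to reduce the conditional expectation to an unconditional one via (\ref{eq:conditional}), apply H\"older's inequality to separate the $\mathcal E$-factor from the Gaussian-density factor, bound the former by Lemma \ref{lem:estE0}, and compute the latter as a Gaussian convolution.

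Since $u\wedge\tau_1\le u\wedge\tau_2\le u$, the random variable $\mathcal E(u\wedge\tau_1,u\wedge\tau_2;X^x)^q$ is $\mathscr F_u$-measurable. Hence Fubini's theorem together with (\ref{eq:conditional}) yields
\[
p^X(0,x;t,y)\,E^{X^x_t=y}\!\left[\int_0^t \mathcal E(u\wedge\tau_1,u\wedge\tau_2;X^x)^q\,du\right]
= \int_0^t E\!\left[\mathcal E(u\wedge\tau_1,u\wedge\tau_2;X^x)^q\,p^X(u,X^x_u;t,y)\right]du.
\]
Then I would apply H\"older's inequality with a conjugate pair $(p,p')$, $p'>1$; the $\mathcal E$-part is controlled by Lemma \ref{lem:estE0} as $E[\mathcal E^{qp}]^{1/p}\le e^{C(1+q^2)t}$, the fixed factor $p$ being absorbed into $C$.

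For the remaining factor I would expand $E[p^X(u,X^x_u;t,y)^{p'}]=\int p^X(0,x;u,\xi)\,p^X(u,\xi;t,y)^{p'}\,d\xi$, apply the Gaussian upper bound (\ref{eq:gaussest}) to both density factors, and evaluate the resulting Gaussian integral by completing the square in $\xi$. This produces a prefactor proportional to $(u(t-u)/(p'u+(t-u)))^{d/2}$ and an exponential $\exp(-\gamma_G^+ p'|x-y|^2/(p'u+(t-u)))$; using $t\le p'u+(t-u)\le p't$, after taking $1/p'$-powers one obtains
\[
E[p^X(u,X^x_u;t,y)^{p'}]^{1/p'} \le C(t-u)^{-(p'-1)d/(2p')}\,t^{-d/(2p')}\exp\!\left(-\tfrac{\gamma_G^+}{p't}|x-y|^2\right).
\]
The integral $\int_0^t(t-u)^{-(p'-1)d/(2p')}du$ is finite precisely when $p'<d/(d-2)$ (in $d\ge 3$; any $p'>1$ works if $d\le 2$), and with such a choice the polynomial-in-$t$ part collapses exactly to $t^{1-d/2}$.

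To finish, I use $t^{1-d/2}=t^{1-d/2-\varepsilon}\,t^{\varepsilon}\le t^{1-d/2-\varepsilon}e^{\varepsilon t}$ for any $\varepsilon>0$ (which follows from $\log t\le t$), so the stray $e^{\varepsilon t}$ is absorbed into $e^{C(1+q^2)t}$, producing the claimed bound. The main delicate point is the H\"older exponent: in $d\ge 3$ one is forced to take $p'$ strictly less than $d/(d-2)$, and the constant $1/(1-(p'-1)d/(2p'))$ blows up as $p'\uparrow d/(d-2)$, so one must fix $p'$ appropriately and track how this choice enters the constants; one must also check that after the square-completion a strictly positive coefficient $\gamma=\gamma_G^+/p'$ remains in front of $|x-y|^2/t$. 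The remaining calculations are routine.
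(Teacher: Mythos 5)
Your proposal is correct and follows essentially the same route as the paper: reduce to an unconditional bound via (\ref{eq:conditional}) and Fubini, split off the $\mathcal{E}$-factor by H\"older and control it with Lemma \ref{lem:estE0}, and exploit the Gaussian convolution of $p^X(0,x;u,\cdot)$ against the pinned-density factor to kill the $(t-u)$-singularity before integrating in $u$. The only difference is bookkeeping of the H\"older exponents — the paper raises the Gaussian factor to the power $(d+\varepsilon)/d$ close to $1$ (leaving an integrable $(t-u)^{-\varepsilon/2}$), while you take a fixed conjugate pair with $p'<d/(d-2)$ (leaving $(t-u)^{-(p'-1)d/(2p')}$) — and both choices are valid, with your constants depending on the same list of quantities.
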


\begin{proof}
In view of Fubini's theorem and (\ref{eq:conditional}), it is sufficient to show that there exist positive constants $C$ and $\gamma$ depending on $d$, $\varepsilon$, $\gamma_{\rm G}^+$, $C_{\rm G}^+$, $\Lambda$, $\| b\| _\infty$ and $\| c\|_\infty$, such that
\begin{equation}\label{eq:estE01}\begin{array}{l}
\displaystyle \int _0^t E\left[ {\mathcal E}(u\wedge \tau _1 , u\wedge \tau _2 ; X^x) ^q p^X(u,X_u;t,y) \right] du \\
\displaystyle \leq Ct^{-d/2 +1-\varepsilon} e^{C(1+q^2)t} \exp \left( -\frac{\gamma |x-y|^2}{t}\right)
\end{array}\end{equation}
for $t\in (0,\infty )$ and $x,y\in {\mathbb R}^d$.
By (\ref{eq:gaussest}) and H\"older's inequality we have
\begin{align*}
&\int _0^t E\left[ {\mathcal E}(u\wedge \tau _1 , u\wedge \tau _2 ; X^x) ^q p^X(u,X_u;t,y) \right] du\\
&\leq C_{\rm G}^+ \int _0^t E\left[ {\mathcal E}(u\wedge \tau _1 , u\wedge \tau _2 ; X^x) ^q  (t-u)^{-\frac d2} \exp \left( -\frac{\gamma_{\rm G}^+ |X_u-y|^2}{t-u}\right) \right] du\\
&\leq C_{\rm G}^+ \left( \int _0^t E\left[{\mathcal E}(u\wedge \tau _1 , u\wedge \tau _2 ; X^x) ^{(d+\varepsilon )q/\varepsilon}\right] du\right) ^{\frac{\varepsilon}{d+\varepsilon }} \\
&\quad \times \left( \int _0^t E\left[(t-u)^{-(d+\varepsilon )/2} \exp \left( -\frac{(d+\varepsilon)\gamma_{\rm G}^+ |X_u^x-y|^2}{d(t-u)}\right) \right] du \right) ^{\frac{d}{d+\varepsilon }} .
\end{align*}
Hence, in view of Lemma \ref{lem:estE0}, to show (\ref{eq:estE01}) it is sufficient to prove that 
\begin{equation}\label{eq:estE03}\begin{array}{l}
\displaystyle \int _0^t E\left[(t-u)^{-(d+\varepsilon )/2} \exp \left( -\frac{(d+\varepsilon )\gamma_{\rm G}^+ |X_u^x -y|^2}{d(t-u)}\right) \right] du \\
\displaystyle \leq Ct^{-(d+\varepsilon )/2 + 1} \exp \left( -\gamma \frac{|x-y|^2}{t}\right)
\end{array}\end{equation}
for $t\in (0,\infty)$ and $x,y\in {\mathbb R}^d$, where $C$ and $\gamma$ are constants depending on $d$, $\varepsilon$, $\gamma_{\rm G}^+$, $C_{\rm G}^+$, $\Lambda$, $\| b\| _\infty$ and $\| c\|_\infty$.

Let $\tilde \gamma := (1+\varepsilon /d)\gamma_{\rm G}^+$.
By (\ref{eq:gaussest}) again, we have for $u\in (0,t)$
\begin{align*}
&E\left[(t-u)^{-d/2} \exp \left( -\frac{ \tilde \gamma |X_u^x-y|^2}{t-u}\right) \right]\\
&\leq C_{\rm G}^+ u^{-d/2} (t-u)^{-d/2} \int _{{\mathbb R}^d} \exp \left( -\frac{ \tilde \gamma |\xi -y|^2}{t-u}\right) \exp \left( -\frac{\gamma_{\rm G}^+ |\xi -x|^2}{u}\right) d\xi \\
&= C_{\rm G}^+ u^{-d/2} (t-u)^{-d/2} \exp \left( -\frac{\gamma_{\rm G}^+ \tilde \gamma}{ u\tilde \gamma + (t-u)\gamma_{\rm G}^+}|x-y|^2\right) \\
& \quad \times \int _{{\mathbb R}^d} \exp \left( -\frac{ u\tilde \gamma + (t-u)\gamma_{\rm G}^+}{u(t-u)}\left| \xi - \frac{\gamma_{\rm G}^+ (t-u)x + \tilde \gamma u y}{ u\tilde \gamma + (t-u)\gamma_{\rm G}^+} \right| ^2 \right) d\xi \\
&= (2\pi )^{d/2}C_{\rm G}^+ \left( u\tilde \gamma + (t-u)\gamma_{\rm G}^+ \right) ^{-d/2} \exp \left( -\frac{\gamma_{\rm G}^+ \tilde \gamma}{ u\tilde \gamma + (t-u)\gamma_{\rm G}^+}|x-y|^2\right) \\
&\leq (2\pi )^{d/2}C_{\rm G}^+ {\gamma _{\rm G}^+} ^{-d/2} t^{-d/2} \exp \left( -\frac{\gamma _{\rm G} ^+ |x-y|^2}{t}\right).
\end{align*}
Hence, there exists a positive constant $C$ depending on $d$, $\gamma_{\rm G}^+$, $C_{\rm G}^+$,  $\Lambda$, $\| b\| _\infty$ and $\| c\|_\infty$, such that
\[
E\left[(t-u)^{-d/2} \exp \left( -\frac{ \tilde \gamma |X_u^x -y|^2}{t-u}\right) \right] \leq Ct^{-d/2}\exp \left( -\frac{\gamma _{\rm G}^+ |x-y|^2}{t}\right), \quad u\in (0,t).
\]
Thus, we obtain (\ref{eq:estE03}).
\end{proof}

\begin{lem}\label{lem:estE2}
Let $t\in (0,\infty)$ and $\tau _1 , \tau _2$ be stopping times such that $0 \leq \tau _1 \leq \tau _2 \leq t$ almost surely.
Then, it holds that
\begin{align*}
& p^X(0,x;t,y)E^{X_t^x=y}\left[ {\mathcal E}(s\wedge \tau _1 , s\wedge \tau _2 ; X^x) ^q \right] \\
& \leq Ct^{-d/2} e^{C(1+q^2)t} \exp \left( -\frac{\gamma |x-y|^2}{t}\right)
\end{align*}
for $t\in (0,\infty)$, $x,y\in {\mathbb R}^d$, $q\in {\mathbb R}$ and $s\in [0,t)$, where $C$ and $\gamma$ are positive constants depending on $d$, $\gamma_{\rm G}^-$, $\gamma_{\rm G}^+$, $C_{\rm G}^-$, $C_{\rm G}^+$, $m$, $M$, $\theta$, $\Lambda$, $\| b\| _\infty$ and $\| c\|_\infty$.
\end{lem}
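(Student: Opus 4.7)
The strategy is to reduce the conditional expectation via (\ref{eq:conditional}), separate an exponential martingale from $\mathcal{F}_s^q:={\mathcal E}(s\wedge\tau_1,s\wedge\tau_2;X^x)^q$, derive an integral equation by It\^o's formula, and close a Gronwall inequality using Lemma \ref{lem:estnablap} to control the gradient term. Since $\mathcal{F}_s$ is ${\mathscr F}_s$-measurable, (\ref{eq:conditional}) gives $p^X(0,x;t,y) E^{X_t^x=y}[\mathcal{F}_s^q]=E[\mathcal{F}_s^q M_s]$ with $M_s:=p^X(s,X_s^x;t,y)$. Decompose $\mathcal{F}_s^q=N_s R_s$ where
\[
N_s:=\exp\Bigl(q\int_{s\wedge\tau_1}^{s\wedge\tau_2}\langle b_\sigma,dB\rangle-\tfrac{q^2}{2}\int_{s\wedge\tau_1}^{s\wedge\tau_2}|b_\sigma|^2\,du\Bigr)
\]
is an exponential martingale with $E[N_s]=1$ (Novikov, since $\|b_\sigma\|_\infty<\infty$) and $R_s\leq e^{C(1+q^2)t}$ pathwise. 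It therefore suffices to bound $g(s):=E[N_s M_s]$ by $C\,p^X(0,x;t,y)\,e^{C(1+q^2)t}$.

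By (\ref{eq:genp}), $M_s$ is a local martingale with $dM_s=\langle\nabla p^X(s,X_s;t,y),\sigma\,dB\rangle$, and $d\langle N,M\rangle_s=N_s q\mathbf{1}_{[\tau_1,\tau_2]}(s)\langle b,\nabla p^X\rangle\,ds$ (using $\sigma b_\sigma=b$). Taking expectations (the martingale contributions vanish on $[0,s]\subset[0,t)$ thanks to the Gaussian bounds on $p^X$ and $\nabla p^X$),
\[
g(s)=p^X(0,x;t,y)+q\int_0^s E\bigl[N_u\mathbf{1}_{[\tau_1,\tau_2]}(u)\langle b,\nabla p^X(u,X_u;t,y)\rangle\bigr]du.
\]
Young's inequality $|q\langle b,\nabla p^X\rangle|\le\lambda|\nabla p^X|^2/p^X+\tfrac{q^2\|b\|_\infty^2}{4\lambda}p^X$ and the weight $\phi(u,\xi):=E[N_u\mid X_u^x=\xi]\,p^X(0,x;u,\xi)\,p^X(u,\xi;t,y)$, which satisfies $\int\phi(u,\xi)\,d\xi=g(u)$ and for which $E[N_u|\nabla p^X|^2/p^X(u,X_u;t,y)]=\int\phi(u,\xi)|\nabla_\xi p^X|^2/p^X(u,\xi;t,y)^2\,d\xi$, then yield for any $\lambda>0$,
\[
g(s)\le p^X(0,x;t,y)+\lambda\int_0^s\!\!\int\phi(u,\xi)\frac{|\nabla_\xi p^X|^2}{p^X(u,\xi;t,y)^2}\,d\xi\,du+\frac{q^2\|b\|_\infty^2}{4\lambda}\int_0^s g(u)\,du.
\]

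Apply Lemma \ref{lem:estnablap} with $s_1=0$, $s_2=s$ to the middle integral, after a standard mollification of $\phi$ near $u=0$ that removes the $\delta_x$-singularity and a passage to the limit. The boundary terms $(1+|\log(t-s_j)|)g(s_j)$ appear directly, while the weighted moments $(t-s_j)^{-1}\int|y-\xi|^2\phi(s_j,\xi)\,d\xi$ reduce to $Cg(s_j)$ with a slightly smaller Gaussian constant by absorbing $|y-\xi|^2/(t-s_j)$ into the Gaussian decay of $p^X(u,\xi;t,y)$. The interior terms $\int\!\int|\nabla_\xi\phi|^2/\phi$ and $\int(1+|\log(t-u)|)\int|\partial_u\phi|$ are computed by the product rule using (\ref{eq:genp}) and the Kolmogorov forward equation for the tilted density $E[N_u|X_u=\xi]p^X(0,x;u,\xi)$, and reduce to multiples of $\int_0^s g(u)\,du$ and $p^X(0,x;t,y)$. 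The $\int\!\int|\partial_j a_{ij}|^2\phi$ term is bounded by H\"older's inequality with exponent $\theta$, invoking (\ref{ass:a3}), while the Gaussian decay of $\phi$ supplies the required $e^{-m|\xi|}$-weight.

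Choosing $\lambda$ small enough (depending mildly on $|\log(t-s)|$) to absorb the $\lambda(1+|\log(t-s)|)g(s)$ contribution into the left side, one obtains $g(s)\le C\,p^X(0,x;t,y)+C(1+q^2)\int_0^s g(u)\,du$, and Gronwall yields $g(s)\le C\,p^X(0,x;t,y)\,e^{C(1+q^2)t}$. Combined with the pathwise bound $R_s\le e^{C(1+q^2)t}$ and the Gaussian upper bound on $p^X(0,x;t,y)$ from (\ref{eq:gaussest}), the claim follows. \textbf{The main obstacle} is the application of Lemma \ref{lem:estnablap}: the weight $\phi$ involves the conditional expectation $E[N_u\mid X_u=\xi]$, which a priori is only measurable, so the regularity hypotheses of the lemma force a careful approximation scheme; moreover, uniformly absorbing the logarithmic endpoint factor $(1+|\log(t-s)|)g(s)$ across $s\in[0,t)$ (in particular as $s\uparrow t$) is the delicate step in closing the Gronwall argument.
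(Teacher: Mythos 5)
Your reduction via (\ref{eq:conditional}), the separation of an exponential martingale, and the use of It\^o's formula together with the cancellation (\ref{eq:genp}) all match the spirit of the paper's argument, but the way you close the estimate has a genuine gap. The critical point is your application of Lemma \ref{lem:estnablap} with the weight $\phi(u,\xi)=E[N_u\mid X_u^x=\xi]\,p^X(0,x;u,\xi)\,p^X(u,\xi;t,y)$. The right-hand side of that lemma contains the interior terms $\int\!\!\int |\nabla_\xi\phi|^2/\phi$ and $\int (1+|\log(t-u)|)\int|\partial_u\phi|$, and for your tilted weight these involve $|\nabla_\xi\,[E[N_u\mid X_u=\xi]\,p^X(0,x;u,\xi)]|^2$ divided by the same quantity, i.e.\ exactly the type of ``gradient of an unknown density squared over the density'' expression that is \emph{not} controlled pointwise under the standing assumptions --- it is precisely what Lemma \ref{lem:estnablap} exists to avoid. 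Your claim that these terms ``reduce to multiples of $\int_0^s g(u)\,du$ and $p^X(0,x;t,y)$ by the product rule and the forward equation'' is not substantiated and, as far as I can see, cannot be: the argument becomes circular (one would need a gradient estimate of the same kind one is trying to prove). The paper sidesteps this by first using H\"older's inequality so that the only gradient quantity needed is $p^X(0,x;t,y)\int [u(t-u)]^{\varepsilon}E^{X_t^x=y}[(|\nabla p^X|/p^X)^2]\,du$, then rewriting it via (\ref{eq:conditional}) as a space-time integral against $p^X(0,x;u,\xi)\,p^X(u,\xi;t,y)$ and dominating \emph{both} densities by the Gaussian upper bound (\ref{eq:gaussest}); only after that is Lemma \ref{lem:estnablap} invoked, with the explicit smooth Gaussian weight whose derivative terms are computable in closed form (and whose $\theta$-H\"older pairing with (\ref{ass:a3}) is where $m,M,\theta$ enter).

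A second problem is the closing of your Gronwall loop. The boundary term $\lambda(1+|\log(t-s)|)g(s)$ can only be absorbed into the left side if $\lambda\lesssim(1+|\log(t-s)|)^{-1}$, but then the complementary Young term $q^2\|b\|_\infty^2/(4\lambda)$ produces a Gronwall exponent of order $q^2(1+|\log(t-s)|)t$, which is not uniform in $s\in[0,t)$ and destroys the claimed bound $e^{C(1+q^2)t}$ as $s\uparrow t$. Likewise, the boundary term $(t-s)^{-1}\int|y-\xi|^2\phi(s,\xi)\,d\xi$ does not ``reduce to $Cg(s)$'': absorbing $|y-\xi|^2/(t-s)$ costs a strictly smaller Gaussian exponent than $\gamma_{\rm G}^+$, and since the lower bound constant $\gamma_{\rm G}^-$ is at least as large, the resulting Gaussian cannot be dominated pointwise by $p^X(s,\xi;t,y)$ in the tails, so the comparison back to $g(s)$ fails. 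The paper needs neither step: after the It\^o cancellation, the remaining terms are bounded \emph{directly} (not self-referentially) through Lemma \ref{lem:estE}, which supplies an extra positive power of $t$, so no Gronwall argument and no absorption of endpoint logarithms is required. To repair your proof you would essentially have to replace the tilted weight by the explicit Gaussian majorant and abandon the Gronwall closure in favour of the direct estimates (\ref{eq:lemI1})--(\ref{eq:lemI2}), which is the paper's route.
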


\begin{proof}
Let  $s_1,s_2 \in (0,t)$ such that $s_1\leq s_2$.
In view of (\ref{eq:ItoE}), by (\ref{eq:conditional}) and It\^o's formula we have
\begin{align*}
& p^X(0,x;t,y) E^{X_t^x=y}\left[ {\mathcal E}( (s\wedge \tau _1)\vee s_1 , (s\wedge \tau _2) \wedge s_2 ; X^x) ^q \right] \\
&= E\left[ p^X(s_2,X_{s_2}^x;t,y) {\mathcal E}( (s\wedge \tau _1)\vee s_1 , (s\wedge \tau _2) \wedge s_2 ; X^x) ^q \right] \\
&= p^X(0,x;t,y) + E\left[ \int _{(s\wedge \tau _1)\vee s_1}^{(s\wedge \tau _2) \wedge s_2}\left. \left( \frac{\partial}{\partial u} p^X(u,\xi ;t,y)\right) \right| _{\xi =X_u^x} {\mathcal E}( (s\wedge \tau _1)\vee s_1 ,u ; X^x) ^q du\right]  \\
&\quad + E\left[ \int _{(s\wedge \tau _1)\vee s_1}^{(s\wedge \tau _2) \wedge s_2} (L^X_u p^X(u,\cdot \, ;t,y)) (X_u^x)  {\mathcal E}( (s\wedge \tau _1)\vee s_1 ,u ; X^x) ^q du \right] \\
& \quad + \frac q2 E\left[ \int _{(s\wedge \tau _1)\vee s_1}^{(s\wedge \tau _2) \wedge s_2} {\mathcal E}( (s\wedge \tau _1)\vee s_1 ,u ; X^x) ^q \right. \\
& \left. \phantom{\int _{(s\wedge \tau _1)\vee s_1}^{(s\wedge \tau _2) \wedge s_2}} \hspace{3cm} \times \left\langle \sigma (u,X_u^x)^T \left. \nabla \! _z p^X(u,z;t,y) \right| _{z=X_u^x} , b_\sigma (u,X_u^x) \right\rangle du \right] \\
& \quad + \frac {q^2}2 E\left[ \int _{(s\wedge \tau _1)\vee s_1}^{(s\wedge \tau _2) \wedge s_2} p^X(u,X_{u}^x;t,y) {\mathcal E}( (s\wedge \tau _1)\vee s_1 ,u ; X^x) ^q |b_\sigma (u,X_u^x)|^2 du \right]\\
& \quad + qE\left[ \int _{(s\wedge \tau _1)\vee s_1}^{(s\wedge \tau _2) \wedge s_2} p^X(u,X_{u}^x;t,y) {\mathcal E}( (s\wedge \tau _1)\vee s_1 ,u ; X^x) ^q c(u,X_u^x) du \right] .
\end{align*}
Hence, by (\ref{eq:genp}) we obtain
\begin{align*}
& p^X(0,x;t,y) E^{X_t^x=y}\left[ {\mathcal E}( (s\wedge \tau _1)\vee s_1 , (s\wedge \tau _2) \wedge s_2 ; X^x) ^q \right]\\
& =  p^X(0,x;t,y) \\
& \quad + \frac q2 E\left[ \int _{(s\wedge \tau _1)\vee s_1}^{(s\wedge \tau _2) \wedge s_2} {\mathcal E}( (s\wedge \tau _1)\vee s_1 ,u ; X^x) ^q \right. \\
& \left. \phantom{\int _{(s\wedge \tau _1)\vee s_1}^{(s\wedge \tau _2) \wedge s_2}} \hspace{1cm} \times \left\langle \sigma (u,X_u^x)^T \left. \nabla \! _z p^X(u,z;t,y) \right| _{z=X_u^x} , b_\sigma (u,X_u^x) \right\rangle du \right] \\
& \quad + \frac {q^2}2 E\left[ \int _{(s\wedge \tau _1)\vee s_1}^{(s\wedge \tau _2) \wedge s_2} p^X(u,X_{u}^x;t,y) {\mathcal E}( (s\wedge \tau _1)\vee s_1 ,u ; X^x) ^q |b_\sigma (u,X_u^x)|^2 du \right]\\
& \quad + qE\left[ \int _{(s\wedge \tau _1)\vee s_1}^{(s\wedge \tau _2) \wedge s_2} p^X(u,X_{u}^x;t,y) {\mathcal E}( (s\wedge \tau _1)\vee s_1 ,u ; X^x) ^q c(u,X_u^x) du \right] .
\end{align*}
In view of the boundedness of $\det \sigma $, $b$ and $c$, the desired estimate is obtained, once we show the following estimates 
\begin{align}
\label{eq:lemI1} &\begin{array}{l}\displaystyle E\left[ \int _{s_1}^{s_2} {\mathcal E}( u \wedge [(s\wedge \tau _1)\vee s_1] ,u ; X^x) ^q p^X(u,X_{u}^x;t,y) du \right] \\
\displaystyle \leq  Ct^{-d/2 +1-\varepsilon} e^{C(1+q^2)t} \exp \left( -\gamma \frac{|x-y|^2}{t}\right) \end{array}\\[3mm]
\label{eq:lemI2} &\begin{array}{l}\displaystyle E\left[ \int _{s_1}^{s_2} {\mathcal E}( u \wedge [(s\wedge \tau _1)\vee s_1] ,u ; X^x) ^q \left| \left. \nabla \! _\xi p^X(u,\xi;t,y) \right| _{\xi=X_u^x} \right| du \right] \\
\displaystyle \leq Ct^{-d/2+ (1-\varepsilon)/2} e^{C(1+q^2)t} \exp \left( -\frac{\gamma |x-y|^2}{t}\right) \end{array}
\end{align}
for sufficiently small $\varepsilon >0$, where $C$ and $\gamma$ are positive constants depending on $d$, $\varepsilon$, $\gamma_{\rm G}^-$, $\gamma_{\rm G}^+$, $C_{\rm G}^-$, $C_{\rm G}^+$, $m$, $M$, $\theta$, $\Lambda$, $\| b\| _\infty$ and $\| c\|_\infty$.
The first estimate (\ref{eq:lemI1}) follows, because by (\ref{eq:conditional}) and Lemma \ref{lem:estE} we have
\begin{align*}
&E\left[ \int _{s_1}^{s_2} {\mathcal E}( u \wedge [(s\wedge \tau _1)\vee s_1] ,u ; X^x) ^q p^X(u,X_{u}^x;t,y) du \right]\\
& = p^X(0,x;t,y) E^{X_t^x=y} \left[ \int _{s_1}^{s_2} {\mathcal E}( u \wedge [(s\wedge \tau _1)\vee s_1] ,u ; X^x) ^q du  \right] \\
& \leq Ct^{-d/2 +1-\varepsilon} e^{C(1+q^2)t} \exp \left( -\gamma \frac{|x-y|^2}{t}\right)
\end{align*}
where $C$ and $\gamma$ are positive constants depending on $d$, $\varepsilon$, $\gamma_{\rm G}^+$, $C_{\rm G}^+$, $\Lambda$, $\| b\| _\infty$ and $\| c\|_\infty$.
Now we show (\ref{eq:lemI2}).
By (\ref{eq:conditional}) and H\"older's inequality we have
\begin{align*}
& E\left[ \int _{s_1}^{s_2} {\mathcal E}( u \wedge [(s\wedge \tau _1)\vee s_1] ,u ; X^x) ^q \left| \left. \nabla \! _\xi p^X(u,\xi;t,y) \right| _{\xi=X_u^x} \right| du \right] \\
&= p^X(0,x;t,y) E^{X_t^x=y} \left[ \int _{s_1}^{s_2} {\mathcal E}( u \wedge [(s\wedge \tau _1)\vee s_1] ,u ; X^x) ^q \frac{\left| \left. \nabla \! _\xi p^X(u,\xi;t,y) \right| _{\xi=X_u^x}  \right|}{p^X(u,X_u^x;t,y)} du \right] \\
&\leq p^X(0,x;t,y) \left( \int _{s_1}^{s_2}  E^{X_t^x=y}\left[ {\mathcal E}( u \wedge [(s\wedge \tau _1)\vee s_1] ,u ; X^x) ^{2q/(1-2\varepsilon)}\right] du \right)  ^{1/2-\varepsilon } \\
&\quad \times \left( \int _{s_1}^{s_2}  [u(t-u)]^{-1/2} du \right)  ^\varepsilon \left( \int _{s_1}^{s_2}  [u(t-u)]^{\varepsilon} E^{X_t^x=y}\left[ \left( \frac{\left| \left. \nabla \! _\xi p^X(u,\xi;t,y) \right| _{\xi=X_u^x} \right|}{p^X(u,X_u^x;t,y)} \right) ^2 \right] du \right) ^{1/2} .
\end{align*}
Lemma \ref{lem:estE} and (\ref{eq:gaussest}) imply
\begin{align*}
&E\left[ \int _{s_1}^{s_2} {\mathcal E}( u \wedge [(s\wedge \tau _1)\vee s_1] ,u ; X^x) ^q \left| \left. \nabla \! _\xi p^X(u,\xi;t,y) \right| _{\xi=X_u^x} \right| du \right] \\
&\leq Ct^{-d/4 +1/2 - \varepsilon } e^{C(1+q^2)t} \exp \left( -\gamma \frac{|x-y|^2}{t}\right)\\
&\quad \times \left( p^X(0,x;t,y) \int _{s_1}^{s_2}  [u(t-u)]^{\varepsilon} E^{X_t^x=y}\left[ \left( \frac{\left| \left. \nabla \! _\xi p^X(u,\xi;t,y) \right| _{\xi=X_u^x} \right|}{p^X(u,X_u^x;t,y)} \right) ^2 \right] du \right) ^{1/2}
\end{align*}
where $C$ and $\gamma $ are positive constants depending on $d$, $\varepsilon$, $\gamma_{\rm G}^+$, $C_{\rm G}^+$, $\Lambda$, $\| b\| _\infty$ and $\| c\|_\infty$.
Hence, to show (\ref{eq:lemI2}) it is sufficient to prove
\begin{equation}\label{eq:estI00}\begin{array}{l}
\displaystyle p^X(0,x;t,y) \int _0^t [u(t-u)]^{\varepsilon /2} E^{X_t^x=y}\left[ \left( \frac{\left| \left. \nabla \! _\xi p^X(u,\xi;t,y) \right| _{\xi=X_u^x} \right|}{p^X(u,X_u^x;t,y)} \right) ^2 \right] du\\
\displaystyle \leq C\left( t^{-d/2+\varepsilon} + t^{-d/2+\varepsilon} |\log t| \right)
\end{array}\end{equation}
where $C$ is a constant depending on $d$, $\varepsilon$, $\gamma_{\rm G}^-$, $\gamma_{\rm G}^+$, $C_{\rm G}^-$, $C_{\rm G}^+$, $m$, $M$, $\theta$ and $\Lambda$.
The expression (\ref{eq:conditional}) implies
\begin{align*}
& p^X(0,x;t,y) \int _0^t [u(t-u)]^{\varepsilon /2} E^{X_t^x=y}\left[ \left( \frac{\left| \left. \nabla \! _\xi p^X(u,\xi;t,y) \right| _{\xi=X_u^x} \right|}{p^X(u,X_u^x;t,y)} \right) ^2 \right] du \\
&= \int _0^t [u(t-u)]^{\varepsilon /2} E\left[ \left( \frac{\left| \left. \nabla \! _\xi p^X(u,\xi;t,y) \right| _{\xi=X_u^x} \right|}{p^X(u,X_u^x;t,y)} \right) ^2 p^X(u,X_u^x;t,y) \right] du \\
&= \int _0^t [u(t-u)]^{\varepsilon /2} \int _{{\mathbb R}^d} \left( \frac{\left| \nabla \! _\xi p^X(u,\xi;t,y) \right|}{p^X(u,\xi;t,y)} \right) ^2 p^X(u,\xi ;t,y) p^X(0,x;u,\xi) d\xi du
\end{align*}
By (\ref{eq:gaussest}) we have
\begin{equation}\label{eq:estI01}\begin{array}{l}
\displaystyle p^X(0,x;t,y) \int _0^t [u(t-u)]^{\varepsilon /2} E^{X_t^x=y}\left[ \left( \frac{\left| \left. \nabla \! _\xi p^X(u,\xi;t,y) \right| _{\xi=X_u^x} \right|}{p^X(u,X_u^x;t,y)} \right) ^2 \right] du \\
\displaystyle \leq (C_{\rm G}^+)^2 \int _0^t  \int _{{\mathbb R}^d} \left( \frac{\left| \nabla \! _\xi p^X(u,\xi;t,y) \right|}{p^X(u,\xi;t,y)} \right) ^2\\
\displaystyle \hspace{3cm} \times [u(t-u)]^{-(d-\varepsilon)/2} \exp \left[ -\gamma _{\rm G}^+ \left( \frac{|\xi-x|^2}{u} + \frac{|y-\xi|^2}{t-u} \right) \right] d\xi du .
\end{array}\end{equation}
For fixed $t$, $x$ and $y$, let
\[
\phi(u, \xi ) :=  [u(t-u)]^{-(d-\varepsilon)/2} \exp \left[ -\gamma _{\rm G}^+ \left( \frac{|\xi-x|^2}{u} + \frac{|y-\xi|^2}{t-u} \right) \right] .
\]
Denote the surface area of the unit sphere in ${\mathbb R}^d$ by $\omega _d$ if $d\geq 2$.
In the case $d=1$, let $\omega _d =2$.
Explicit calculation implies
\begin{align*}
& \int _0^t \left( \int _{{\mathbb R}^d} e^{2m|\xi|/(\theta -2)}\phi (u,\xi ) ^{\theta /(\theta -2)} d\xi \right) ^{(\theta -2)/\theta} du \\
& = \int _0^t [u(t-u)]^{-(d-\varepsilon)/2} \\
& \quad \times \left( \int _{{\mathbb R}^d} e^{2m|\xi|/(\theta -2)} \exp \left[ -\frac{\gamma _{\rm G}^+ \theta}{\theta-2} \left( \frac{|\xi-x|^2}{u} + \frac{|y-\xi|^2}{t-u} \right) \right] d\xi \right) ^{(\theta -2)/\theta } du \\
& = \exp\left( -\frac{\gamma _{\rm G}^+|x-y|^2}{t}\right) \int _0^t [u(t-u)]^{-(d-\varepsilon)/2} \\
& \quad \times \left( \int _{{\mathbb R}^d} e^{2m|\xi|/(\theta -2)} \exp \left[ -\frac{\gamma _{\rm G}^+ \theta t}{(\theta-2)u(t-u)} \left| \xi - \frac{(t-u)x + uy}{t} \right| ^2 \right] d\xi \right) ^{(\theta -2)/\theta } du.
\end{align*}
Hence, noting that for $\mu _1 \in [0,\infty)$, $\mu _2 \in (0,\infty)$ and $\nu \in {\mathbb R}^d$
\begin{align*}
& \int _{{\mathbb R}^d} e^{\mu _1 |\xi|} \exp \left( -\mu _2 \left| \xi - \nu \right| ^2 \right) d\xi \\
& =  \int _{{\mathbb R}^d} e^{\mu _1 |\xi + \nu |} \exp \left( -\mu _2 \left| \xi \right| ^2 \right) d\xi \\
& \leq e^{\mu _1 |\nu |} \int _{{\mathbb R}^d} \exp \left( \mu _1 |\xi| -\mu _2 \left| \xi \right| ^2 \right) d\xi \\
& = \omega _d e^{\mu _1 |\nu |} \int _{(0,\infty )} r^{d-1}\exp \left( \mu _1 r -\mu _2 r ^2 \right) dr \\
& = \omega _d \mu _2 ^{-d/2} e^{\mu _1 |\nu |} \int _{(0,\infty )} r^{d-1}\exp \left( \frac{\mu _1}{\sqrt{\mu _2}} r - r ^2 \right) dr \\
& = \omega _d \mu _2 ^{-d/2}\int _0^{1+\mu _1/\sqrt{\mu _2}} r^{d-1}\exp \left( \frac{\mu _1}{\sqrt{\mu _2}} r - r ^2 \right) dr \\
& \hspace{4cm} + \omega _d \mu _2 ^{-d/2} \int _{1+\mu _1/\sqrt{\mu _2}} ^\infty r^{d-1}\exp \left( \frac{\mu _1}{\sqrt{\mu _2}} r - r ^2 \right) dr\\
& \leq  \frac{\omega _d \mu _2 ^{-d/2}}{d} \left( 1+ \frac{\mu _1}{\sqrt{\mu _2}} \right) ^d \exp \left[ \frac{\mu _1}{\sqrt{\mu _2}} \left( 1+ \frac{\mu _1}{\sqrt{\mu _2}} \right) \right] + \omega _d \mu _2 ^{-d/2} \int _{1+\mu _1/\sqrt{\mu _2}} ^\infty r^{d-1}e^{-r} dr \\
& \leq C \mu _2 ^{-d/2} \exp \left[ C \frac{\mu _1}{\sqrt{\mu _2}} \left( 1+ \frac{\mu _1}{\sqrt{\mu _2}} \right) \right]
\end{align*}
where $C$ is a constant depending on $d$, we have
\begin{align*}
& \int _0^t \left( \int _{{\mathbb R}^d} e^{2m|\xi|/(\theta -2)}\phi (u,\xi ) ^{\theta /(\theta -2)} d\xi \right) ^{(\theta -2)/\theta} du \\
& \leq C_1 t^{-d/2+d/\theta} \exp\left( -\frac{\gamma _{\rm G}^+|x-y|^2}{t}\right) \int _0^t [u(t-u)]^{-d/\theta +\varepsilon /2} \\
& \hspace{5cm} \times \exp \left[ C_1 \sqrt{u\left( 1-\frac ut \right)}\left( 1 + \sqrt{u\left( 1-\frac ut \right)}\right) \right] du \\
& \leq C_2 t^{-d/2+d/\theta} e^{C_2t}\int _0^t [u(t-u)]^{-d/\theta +\varepsilon /2}  du \\
& \leq C_3 t^{-d/2+1-d/\theta + \varepsilon /2} e^{C_2t}
\end{align*}
where $C_1 , C_2, C_3$ are constants depending on  $d$, $\varepsilon$, $\gamma_{\rm G}^+$, $m$ and $\theta$.
Hence, by H\"older's inequality and (\ref{ass:a3}) we have
\begin{align*}
& \sum _{i,j=1}^d \int _{s_1}^{s_2} \int _{{\mathbb R}^d} \left| \frac{\partial}{\partial \xi _j} a_{ij}(u,\xi )\right| ^2 \phi (u,\xi) d\xi du \\
& \leq \int _0^t \left( \int _{{\mathbb R}^d} e^{2m|\xi|/(\theta -2)}\phi (u,\xi ) ^{\theta /(\theta -2)} d\xi \right) ^{(\theta -2)/\theta } du \\
&\hspace{4cm} \times \sum _{i,j=1}^d \left( \mathop{\rm sup}_{u\in [0,t]} \int _{{\mathbb R}^d} \left| \frac{\partial}{\partial \xi _j} a_{ij}(u,\xi )\right| ^\theta e^{-m|\xi|} d\xi \right) ^{2/\theta} \\
& \leq C t^{-d/2+1-d/\theta + \varepsilon /2} e^{Ct}
\end{align*}
where $C$ is a constant depending on $d$, $\varepsilon$, $\gamma_{\rm G}^+$, $m$, $M$ and $\theta$.
On the other hand, by explicit calculation we have
\begin{align*}
&\lim _{s\downarrow 0} (1+ |\log (t-s)| ) \int _{{\mathbb R}^d} \phi (s,\xi) d\xi =0,\\
&\lim _{s\downarrow 0}  (t-s)^{-1}\int _{{\mathbb R}^d} |y-\xi|^2 \phi (s,\xi) d\xi=0,\\
&\lim _{s\uparrow t}(1+ |\log (t-s)| ) \int _{{\mathbb R}^d} \phi (s,\xi) d\xi =0,\\
&\lim _{s\uparrow t} (t-s)^{-1}\int _{{\mathbb R}^d} |y-\xi|^2 \phi (s,\xi) d\xi =0,\\
&\int _0^t \int _{{\mathbb R}^d} \frac{\left| \nabla \! _\xi \phi (u,\xi)\right| ^2}{\phi (u,\xi)} d\xi du \leq Ct^{-d/2 +\varepsilon },\\
&\int _0^t (1+ |\log (t-u)| ) \int _{{\mathbb R}^d} \left| \frac{\partial}{\partial u} \phi (u,\xi )\right| d\xi du \leq Ct^{-d/2 +\varepsilon} |\log t|\\
&\int _0^t (t-u)^{-1}\int _{{\mathbb R}^d} |y-\xi |^2 \left| \frac{\partial}{\partial u} \phi (u,\xi )\right| d\xi du \leq Ct^{-d/2+\varepsilon},
\end{align*}
where $C$ is a constant depending on $d$, $\varepsilon$ and $\gamma_{\rm G}^+$.
In view of these results, applying Lemma \ref{lem:estnablap} to (\ref{eq:estI01}), we obtain (\ref{eq:estI00}).
\end{proof}

From Lemma \ref{lem:estE2} we can easily show the Gaussian estimate for $p$ with the constants depending on the suitable constants.

\begin{prop}\label{lem:cpt}
It holds that
\[
\frac{C_1e^{-C_1 (t-s)}}{(t-s)^{\frac d2}} \exp \left( -\frac{\gamma_1 |x-y|^2}{t-s}\right) \leq p(s,x;t,y) \leq \frac{C_2e^{C_2 (t-s)}}{(t-s)^{\frac d2}} \exp \left( -\frac{\gamma_2 |x-y|^2}{t-s}\right)
\]
for $s,t \in [0,\infty)$ such that $s<t$, and $x,y\in {\mathbb R}^d$, where $\gamma_1$, $\gamma_2$, $C_1$ and $C_2$ are positive constants depending on $d$, $\gamma_{\rm G}^-$, $\gamma_{\rm G}^+$, $C_{\rm G}^-$, $C_{\rm G}^+$, $m$, $M$, $\theta$, $\Lambda$, $\| b\| _\infty$ and $\| c\| _\infty$.
\end{prop}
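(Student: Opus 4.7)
My plan is to read off the Gaussian bounds for $p$ directly from the probabilistic representation (\ref{eq:fundamental}) and Lemma \ref{lem:estE2}, by applying the latter in the two extreme cases $q=1$ (for the upper bound) and $q=-1$ combined with Jensen/Cauchy--Schwarz (for the lower bound). Throughout, the general case $s\in[0,\infty)$ reduces to the case $s=0$ by applying the whole argument to the time-shifted operator $L_{s+\cdot}$, since the hypotheses (\ref{ass:a1}), (\ref{ass:a3}), (\ref{ass:a2}) and (\ref{eq:gaussest}) are preserved with the same constants; so I would only write down the $s=0$ case and then replace $t$ by $t-s$ in the conclusion.

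For the upper bound, I would take the stopping times $\tau_1\equiv 0$ and $\tau_2\equiv t$ in Lemma \ref{lem:estE2}, and choose $q=1$. Then $s\wedge\tau_1=0$ and $s\wedge\tau_2=s$ for every $s\in[0,t)$, so the lemma yields
\[
p^X(0,x;t,y)\,E^{X_t^x=y}\!\left[\mathcal{E}(0,s;X^x)\right]\leq Ct^{-d/2}e^{2Ct}\exp\!\left(-\tfrac{\gamma|x-y|^2}{t}\right)
\]
uniformly in $s\in[0,t)$. Since $\mathcal{E}(0,s;X^x)\to\mathcal{E}(0,t;X^x)$ almost surely as $s\uparrow t$ and this random variable is nonnegative, Fatou's lemma applied inside $E^{X_t^x=y}[\,\cdot\,]$ together with the representation (\ref{eq:fundamental}) gives the upper bound with $C_2,\gamma_2$ of the desired form.

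For the lower bound, by Cauchy--Schwarz under the conditional measure $P^{X_t^x=y}$,
\[
1=E^{X_t^x=y}\!\left[\mathcal{E}(0,t;X^x)^{1/2}\mathcal{E}(0,t;X^x)^{-1/2}\right]^{2}\leq E^{X_t^x=y}\!\left[\mathcal{E}(0,t;X^x)\right]\cdot E^{X_t^x=y}\!\left[\mathcal{E}(0,t;X^x)^{-1}\right],
\]
so that
\[
p(0,x;t,y)=p^X(0,x;t,y)\,E^{X_t^x=y}\!\left[\mathcal{E}(0,t;X^x)\right]\geq\frac{p^X(0,x;t,y)^{2}}{p^X(0,x;t,y)\,E^{X_t^x=y}[\mathcal{E}(0,t;X^x)^{-1}]}.
\]
Applying Lemma \ref{lem:estE2} with $\tau_1\equiv 0$, $\tau_2\equiv t$, $q=-1$, and sending $s\uparrow t$ via Fatou exactly as above, the denominator is controlled by $Ct^{-d/2}e^{2Ct}\exp(-\gamma|x-y|^{2}/t)$. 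The numerator is bounded below using the lower Gaussian estimate in (\ref{eq:gaussest}) by $(C_{\rm G}^{-})^{2}t^{-d}\exp(-2\gamma_{\rm G}^{-}|x-y|^{2}/t)$. Taking the ratio yields a lower bound of the form $C_1 e^{-C_1 t}t^{-d/2}\exp(-\gamma_1|x-y|^{2}/t)$ with $\gamma_1$ essentially $2\gamma_{\rm G}^{-}-\gamma$ (if this is nonpositive the bound is only stronger, so we may always enlarge $\gamma_1$ to a positive constant of the desired dependence).

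I do not expect any real obstacle: all of the analytic work has already been done in Lemma \ref{lem:estnablap} and absorbed into Lemma \ref{lem:estE2}. The only steps that need a moment's care are (i) the $s\uparrow t$ limits, which go through by Fatou because both $\mathcal{E}(0,s;X^x)$ and $\mathcal{E}(0,s;X^x)^{-1}$ are nonnegative and the right-hand side of Lemma \ref{lem:estE2} is independent of $s$, and (ii) bookkeeping the dependence of the constants on the parameters listed in the statement, which is immediate from Lemma \ref{lem:estE2} and from $\gamma_{\rm G}^-,C_{\rm G}^-$ entering through the lower Gaussian bound on $p^X$.
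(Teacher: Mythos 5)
Your proposal is correct and follows essentially the same route as the paper: the upper bound comes from the representation (\ref{eq:fundamental}) together with Lemma \ref{lem:estE2} with $q=1$, and the lower bound from the H\"older/Cauchy--Schwarz inversion $1\le E^{X_t^x=y}[\mathcal{E}]\,E^{X_t^x=y}[\mathcal{E}^{-1}]$, Lemma \ref{lem:estE2} with $q=-1$, and the lower Gaussian estimate (\ref{eq:gaussest}) for $p^X$, after the same time-shift reduction to $s=0$. Your explicit Fatou argument for the $s\uparrow t$ limit is a minor technical refinement of a step the paper leaves implicit, not a different method.
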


\begin{proof}
Since all the argument follows even if $a$, $b$ and $c$ are replaced by $a(\cdot -s, \cdot )$, $b(\cdot -s, \cdot )$ and $c(\cdot -s, \cdot )$ respectively, it is sufficient to show that there exist positive constants $\gamma_1$, $\gamma_2$, $C_1$ and $C_2$ depending on $d$, $m$, $M$, $\theta$, $\Lambda$, $\| b\| _\infty$ and $\| c\| _\infty$
\begin{equation}\label{eq:lemcpt}
C_1 t^{-d/2}e^{-C_1 t} \exp \left( -\frac{\gamma_1 |x-y|^2}{t}\right) \leq p(0,x;t,y) \leq C_2 t^{-d/2}e^{C_2 t} \exp \left( -\frac{\gamma_2 |x-y|^2}{t}\right)
\end{equation}
for $t \in (0,\infty)$ and $x,y\in {\mathbb R}^d$.
The upper estimate in (\ref{eq:lemcpt}) follows immediately from (\ref{eq:fundamental}) and Lemma \ref{lem:estE2}.

Now we prove the lower estimate in (\ref{eq:lemcpt}).
From H\"older's inequality, it follows that
\[
1 \leq E^{X_t^x=y}\left[ {\mathcal E}(s\wedge \tau _1 , s\wedge \tau _2 ; X^x) ^{-1} \right] E^{X_t^x=y}\left[ {\mathcal E}(s\wedge \tau _1 , s\wedge \tau _2 ; X^x) \right].
\]
Hence, by Lemma \ref{lem:estE2} we have
\begin{align*}
& p^X(0,x;t,y)E^{X_t^x=y}\left[ {\mathcal E}(s\wedge \tau _1 , s\wedge \tau _2 ; X^x) \right] \\
& \geq \frac{p^X(0,x;t,y) ^2 }{p^X(0,x;t,y) E^{X_t^x=y}\left[ {\mathcal E}(s\wedge \tau _1 , s\wedge \tau _2 ; X^x) ^{-1} \right]}\\
& \geq C t^{d/2} e^{-C' t}p^X(0,x;t,y) ^2,
\end{align*}
where $C$ and $C'$ are positive constants depending on $d$, $m$, $M$, $\theta$, $\Lambda$, $\| b\| _\infty$ and $\| c\| _\infty$.
This inequality, (\ref{eq:fundamental}) and (\ref{eq:gaussest}) imply the lower bound in (\ref{eq:lemcpt}).
\end{proof}

\section{The regularity of $p(0,x;t,y)$ in $x$}\label{sec:x}

Assume that $a$ is smooth and set the notation as in Section \ref{sec:representation}.
In this section, we prove the H\"older continuity of $p(0,x;t,y)$ in $x$, and the constant depends only on suitable ones.
The precise statement is as follows.

\begin{prop}\label{prop1}
For any $R>0$ and sufficiently small $\varepsilon >0$, there exists a constant $C$ depending on $d$, $\varepsilon$, $\gamma_{\rm G}^-$, $\gamma_{\rm G}^+$, $C_{\rm G}^-$, $C_{\rm G}^+$, $m$, $M$, $\theta$, $R$, $\rho _R$, $\Lambda$, $\| b\| _\infty$ and $\| c\| _\infty$ such that
\[
|p(0,x;t,y) - p(0,z;t,y)| \leq C t^{-d/2-1} e^{Ct} |x-z|^{1-\varepsilon}
\]
for $t\in (0,\infty )$, $x,z \in B(0;R/2)$ and $y \in {\mathbb R}^d$.
\end{prop}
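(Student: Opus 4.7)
The starting point is the probabilistic identity (\ref{eq:fundamental}), which represents $p(0,x;t,y) = p^X(0,x;t,y) F(x)$ with $F(x):= E^{X^x_t=y}[{\mathcal E}(0,t;X^x)]$. With this notation,
\[
p(0,x;t,y) - p(0,z;t,y) = [p^X(0,x;t,y) - p^X(0,z;t,y)]\,F(x) + p^X(0,z;t,y)\,[F(x) - F(z)],
\]
so the task splits into proving $(1-\varepsilon)$-H\"older regularity of the driftless density $p^X(0,\cdot\,;t,y)$ in its initial point and an analogous regularity of the functional $F$. The plan is to handle both pieces by the Lindvall--Rogers coupling method: construct $X^x$ and $X^z$ on a single probability space via a mirror coupling of the driving Brownian motions, and denote by $\tau$ the first time the two paths coincide, so that after $\tau$ the two processes (and the remaining Brownian increments) agree.

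The only input about $\sigma$ beyond uniform ellipticity (\ref{ass:a1}) that is available at the present level of generality is the local modulus of continuity (\ref{ass:a2}); this is exactly what is needed to compare the coefficients driving $X^x - X^z$ while both processes remain in $B(0;R)$, and it yields an estimate of the form $P(\tau > \delta) \leq C_R |x-z|/\sqrt{\delta}$ up to a $\rho_R$-modulus correction, provided $|x-z|$ is small. The $(1-\varepsilon)$-H\"older bound on $p^X(0,\cdot\,;t,y)$ then follows by a standard route: run the coupling only on $[0,\delta]$ with $\delta \leq t$, apply the Markov property at time $\delta$ together with the uniform Gaussian bound (\ref{eq:gaussest}) to compare $p^X(\delta,X^x_\delta;t,y)$ and $p^X(\delta,X^z_\delta;t,y)$ in expectation, and optimize $\delta$ against $|x-z|$. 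The loss of $\varepsilon$ in the exponent comes from the H\"older loss one suffers against the Gaussian prefactor $\delta^{-d/2}$.

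For the functional piece $F(x)-F(z)$, I would use the decomposition (\ref{eq:lemII01}): on $\{\tau \leq t\}$ the Girsanov integrands associated with $X^x$ and $X^z$ agree for $u \geq \tau$, so ${\mathcal E}(\tau\wedge t,t;X^x)$ and ${\mathcal E}(\tau\wedge t,t;X^z)$ coincide, and only the pre-coupling remainder ${\mathcal E}(0,\tau\wedge t;X^\bullet)-1$ survives; formula (\ref{eq:ItoE}) represents this remainder as the sum of a stochastic and an ordinary integral up to $\tau\wedge t$, both of which are $O(\sqrt{\tau\wedge t})$ in $L^q(P)$ by Lemma~\ref{lem:estE0}. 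The critical step is to convert these path estimates under $P$ into estimates under the pinned measure $P^{X^\bullet_t=y}$. Here Lemma~\ref{lem:estE2} is essential: it provides a uniform $L^q$-control of ${\mathcal E}(s\wedge\tau_1,s\wedge\tau_2;X^x)$ under the pinned expectation with a Gaussian prefactor that, combined with H\"older's inequality and Proposition~\ref{lem:cpt}, lets one replace pinned bounds by weighted moment bounds at the cost of a $t^{-d/2}$ factor and an $e^{C(1+q^2)t}$ exponent.

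The principal obstacle is the singularity of the pinned drift $\tfrac12 a \nabla \log p^X(\cdot\,;t,y)$ near $u=t$: a naive coupling of the pinned diffusions themselves is not regular enough to analyze directly, so I will build the coupling for the unpinned processes $X^x$, $X^z$ and rely on Lemma~\ref{lem:estE2} to absorb the conditioning at time $t$. The exponent $1-\varepsilon$ then emerges from balancing $P(\tau > \delta) \lesssim |x-z|/\sqrt{\delta}$ against the $\delta^{-d/2}$ blow-up of the Gaussian prefactor and the $\varepsilon$-loss already present in Lemma~\ref{lem:estE2}; reaching $\varepsilon=0$ would require either a sharper coupling estimate than is available under (\ref{ass:a2}) alone or a sharper conditional integrability bound than Lemma~\ref{lem:estE2} provides.
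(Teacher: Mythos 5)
Your toolbox is the right one (the representation (\ref{eq:fundamental}), the mirror coupling, the decomposition (\ref{eq:lemII01})--(\ref{eq:ItoE}), Lemmas \ref{lem:estE0} and \ref{lem:estE2}), but two steps in your outline do not go through as stated. First, the coupling-time bound $P(\tau>\delta)\lesssim |x-z|/\sqrt{\delta}$ ``up to a $\rho_R$-modulus correction'' is not available under (\ref{ass:a2}) alone: by It\^o's formula the radial process $|\xi_t|=|X^x_t-Z^z_t|$ carries a drift of size $\rho_R(|\xi_t|)/|\xi_t|$, which is uncontrolled near $0$ for a general modulus of continuity (no Lipschitz, H\"older or even Dini condition is assumed), so the clean reflection-coupling tail estimate fails. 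The paper's Lemma \ref{lem:estprob1} replaces it by applying It\^o's formula to $|\xi_t|^{1-\varepsilon}$, using the concavity term together with the lower bound on $|\alpha_t^T\xi_t|/|\xi_t|$ to absorb the bad drift on the region where $\rho_R(|\xi_t|)$ is small, and localizing with an auxiliary stopping time and maximal inequalities; this yields only $E[t\wedge\tau]\le C(1+t^2)|x-z|^{1-\varepsilon}$, and this is where the $\varepsilon$-loss actually originates (not from balancing $P(\tau>\delta)$ against the Gaussian prefactor $\delta^{-d/2}$). This It\^o argument is the heart of the proof and is missing from your proposal.

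Second, the product-rule splitting $[p^X(0,x;t,y)-p^X(0,z;t,y)]F(x)+p^X(0,z;t,y)[F(x)-F(z)]$ creates problems the paper deliberately avoids. The post-coupling cancellation you invoke (``only the pre-coupling remainder survives'') is literally true only for the \emph{unnormalized} quantities $E[\mathcal{E}(0,s;X^x)\,p^X(s,X^x_s;t,y)]$ and $E[\mathcal{E}(0,s;Z^z)\,p^X(s,Z^z_s;t,y)]$, because via (\ref{eq:conditional}) both the weights and the post-$\tau$ Girsanov factors coincide on $\{\tau\le t/2\}$ when $s>t/2$. At the level of $F(x)-F(z)$ you are comparing expectations under two \emph{different} pinned measures with different normalizations $p^X(0,x;t,y)$ and $p^X(0,z;t,y)$; unravelling this either forces you to bound $p(0,x;t,y)-p(0,z;t,y)$ directly (circular), or to control the normalization mismatch and a uniform-in-$y$ bound on $F(x)$ --- and the latter is not immediate, since the Gaussian exponent $\gamma$ produced by Lemma \ref{lem:estE2} need not dominate $\gamma_{\rm G}^-$ in (\ref{eq:gaussest}), so the ratio of Gaussian factors can grow in $|x-y|$. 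The paper's proof instead estimates the unnormalized difference directly, splitting on $\{\tau\le t/2\}$ (where (\ref{eq:lemII01}), Lemma \ref{lem:II} and Lemma \ref{lem:estE0} control the pre-coupling remainders) and on $\{\tau>t/2\}$ (handled by Chebyshev together with the \emph{pinned} coupling-time estimate of Lemma \ref{lem:estprob2} and Lemma \ref{lem:estE2}), and never needs H\"older continuity of $p^X(0,\cdot\,;t,y)$ or of $F$ separately. You would also need the restriction $|x-z|\le c_0$ and its removal via (\ref{eq:gaussest}), a small but necessary final step.
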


We use the coupling method (see e.g. \cite{LiRo}, \cite{Cr}).
Let $x,z \in {\mathbb R}^d$.
According to $(X^x, B)$ defined by (\ref{SDEX}), we consider the stochastic process $Z_t^z$ defined by
\begin{equation}\label{SDEz}
\left\{ \begin{array}{rl}
\displaystyle Z_t^z & \displaystyle = z + \int _0 ^{t\wedge \tau} \sigma (s,Z_s^z)d\tilde B_s + \int _{t\wedge \tau} ^t \sigma (s,Z_s^z)dB_s\\[3mm]
\displaystyle \tilde B_t & \displaystyle = \int _0^{t\wedge \tau} \left( I -\frac{2(\sigma (s,Z_s^z)^{-1} (X_s^x-Z_s^z))\otimes (\sigma (s,Z_s^z)^{-1} (X_s^x-Z_s^z))}{|\sigma (s,Z_s^z)^{-1} (X_s^x-Z_s^z)|^2}\right) dB_s
\end{array}\right.
\end{equation}
where $\tau$ is the stopping time defined by $\tau := \inf \{ t\geq 0; X_t^x=Z_t^z\}$.
By the Lipschitz continuity of $\sigma$, $(Z^z(t), \tilde B(t); t\in [0,\tau ))$ are determined almost surely and uniquely (see Theorem 6 of Chapter V in \cite{Pr}).
Let
\[
H_t :=I -\frac{2\left[ \sigma (t,Z_t^z)^{-1} (X_t^x-Z_t^z)\right] \otimes \left[ \sigma (t,Z_t^z)^{-1} (X_t^x-Z_t^z)\right] }{|\sigma (t,Z_t^z)^{-1} (X_t^x-Z_t^z)|^2}
\]
for $t\in [0,\tau )$.
Then, $H_t$ is an orthogonal matrix for all $t\in [0,\tau )$, and hence $\tilde B_t$ is a $d$-dimensional Brownian motion for $t\in [0,\tau )$.
Hence, $(Z^z(t), \tilde B(t); t\in [0,\tau ))$ are extended to $(Z^z(t), \tilde B(t); t\in [0,\tau ])$ almost surely and uniquely.
By the Lipschitz continuity of $\sigma$ again, (\ref{SDEz}) is solved almost surely and uniquely for $t\in [\tau , \infty )$.
Thus, we obtain $(Z^z(t); t\in [0,\infty ))$ almost surely and uniquely.
From this fact we have that $Z^z_t$ is ${\mathscr F}_t$-measurable for $t \in [0,\infty )$.
Hence, if $x=z$, $X^x$ and $Z^z$ has the same law.
Moreover, $X^x_t = Z^z_t$ for $t\in [\tau ,\infty )$ almost surely.

\begin{lem}\label{lem:estprob1}
For $R>0$ and sufficiently small $\varepsilon >0$, there exist positive constants $C$ and $c_0$ depending on $d$, $\varepsilon$, $R$, $\rho _R$ and $\Lambda$ such that
\begin{equation}\label{eq:estprob1-0}
E[t\wedge \tau] \leq C(1+t^2)|x-z|^{1-\varepsilon}
\end{equation}
for $t\in [0, \infty )$ and $x,z \in B(0;R/2)$ such that $|x-z| \leq c_0$.
\end{lem}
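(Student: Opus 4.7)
My plan is to show that $r_t := |X_t^x - Z_t^z|$ satisfies a radial SDE whose drift, thanks to the mirror reflection in \eqref{SDEz}, cancels at leading order and is controlled purely by the modulus of continuity of $\sigma$. From this I can exhibit $r_t^{1-\varepsilon}$ as a local supermartingale up to a small-ball exit, which yields both the coupling probability and the integrated estimate needed for $E[t\wedge\tau]$.

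The first step is the It\^o calculation for $r_t$. Setting $D_t := X_t^x - Z_t^z$, $v_t := \sigma(t,Z_t^z)^{-1}D_t/|\sigma(t,Z_t^z)^{-1}D_t|$ and $\Delta_t := \sigma(t,X_t^x) - \sigma(t,Z_t^z)$, equations \eqref{SDEX}--\eqref{SDEz} give $dD_t = A_t\,dB_t$ with $A_t = 2\sigma(t,Z_t^z)v_t\otimes v_t + \Delta_t$ for $t<\tau$. Applying It\^o to $|D_t|$ (after regularising by $(|D_t|^2+\delta)^{1/2}$ and $\delta\downarrow 0$) and using that $\sigma(t,Z_t^z)v_t$ is parallel to $D_t$, the rank-one mirror part drops out of the radial drift and I obtain
\[
dr_t = v_{D,t}^{\,T}A_t\,dB_t + \frac{1}{2r_t}\bigl(|\Delta_t|_{HS}^2 - |\Delta_t^{\,T}v_{D,t}|^2\bigr)\,dt,\qquad v_{D,t}:=D_t/|D_t|.
\]
By \eqref{eq:sigma}, as long as $X_t^x, Z_t^z \in B(0;R)$, this drift is non-negative and bounded by $C\rho_R(r_t)^2/(2r_t)$, while $|A_t^{\,T}v_{D,t}|^2 \geq 4/\Lambda - C'\rho_R(r_t)$.

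I then introduce $\tau_0 := \inf\{s\geq0 : r_s \geq c_0\}$ and $\sigma_R := \inf\{s\geq0 : X_s^x\notin B(0;R)\text{ or }Z_s^z\notin B(0;R)\}$, and pick $c_0\leq R/2$ small enough (depending on $\varepsilon, R, \rho_R, \Lambda$) that $\rho_R(c_0)$ is negligible compared to $\varepsilon/\Lambda$. Applying It\^o to $r_t^{1-\varepsilon}$ on $[0,\tau\wedge\tau_0\wedge\sigma_R)$, the total drift is
\[
\tfrac{(1-\varepsilon)r_t^{-1-\varepsilon}}{2}\bigl[\,|\Delta_t|_{HS}^2 - |\Delta_t^{\,T}v_{D,t}|^2 - \varepsilon|A_t^{\,T}v_{D,t}|^2\,\bigr] \leq -\kappa\,r_t^{-1-\varepsilon},\qquad \kappa := \tfrac{\varepsilon(1-\varepsilon)}{2\Lambda},
\]
by the choice of $c_0$, so after a standard localisation
\[
E\!\left[r_{t\wedge\tau\wedge\tau_0\wedge\sigma_R}^{1-\varepsilon}\right] + \kappa\,E\!\left[\int_0^{t\wedge\tau\wedge\tau_0\wedge\sigma_R} r_s^{-1-\varepsilon}\,ds\right] \leq |x-z|^{1-\varepsilon}.
\]

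To finish, I decompose
\[
E[t\wedge\tau]\leq E[t\wedge\tau;\,t\wedge\tau<\tau_0\wedge\sigma_R] + t\,P(\tau_0\leq t\wedge\tau\wedge\sigma_R) + t\,P(\sigma_R\leq t\wedge\tau).
\]
On the first event $r_s \leq c_0$ on $[0,t\wedge\tau]$, so $r_s^{-1-\varepsilon} \geq c_0^{-1-\varepsilon}$ and the integrated supermartingale estimate yields $\leq c_0^{1+\varepsilon}\kappa^{-1}|x-z|^{1-\varepsilon}$. Since $r_{\tau_0}=c_0$ on $\{\tau_0\leq t\wedge\tau\wedge\sigma_R\}$, Chebyshev gives $P(\tau_0\leq t\wedge\tau\wedge\sigma_R)\leq(|x-z|/c_0)^{1-\varepsilon}$, contributing at most $t\,c_0^{-(1-\varepsilon)}|x-z|^{1-\varepsilon}$. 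The third term is controlled via a standard exit estimate $P(\sigma_R\leq t)\leq C_R e^{-cR^2/t}$ for the bounded-coefficient martingale diffusions $X^x, Z^z$, which after elementary manipulations (combining the large- and small-$t$ regimes) is absorbed into a $C(1+t^2)|x-z|^{1-\varepsilon}$-type bound. The crux of the proof is the cancellation in the first step: without the mirror coupling's precise annihilation of the $O(1)$ radial drift, the residual $O(\rho_R(r)^2/r)$ term could not be beaten by the $\varepsilon$-concavity of $r\mapsto r^{1-\varepsilon}$ and no choice of $c_0$ would make the latter a supermartingale near zero; reconciling $\sigma_R$ with the $(1+t^2)$ factor is a secondary but non-trivial technical nuisance.
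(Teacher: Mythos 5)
Your radial It\^o computation, the choice of the exponent $1-\varepsilon$, and the supermartingale estimate up to $\tau\wedge\tau_0\wedge\sigma_R$ are exactly the Lindvall--Rogers argument the paper uses, and your treatment of the first two terms of your decomposition (the integrated $r_s^{-1-\varepsilon}$ bound and the Chebyshev bound $P(\tau_0\le t\wedge\tau\wedge\sigma_R)\le(|x-z|/c_0)^{1-\varepsilon}$) is sound. The gap is in the third term, $t\,P(\sigma_R\le t\wedge\tau)$. The exit estimate $P(\sigma_R\le t)\le C_R e^{-cR^2/t}$ is only small for small $t$: for fixed $t$ of order one, $P(\sigma_R\le t)$ is bounded below by a positive constant uniformly in $x,z\in B(0;R/2)$, independently of $|x-z|$, because the nondegenerate diffusions $X^x,Z^z$ leave $B(0;R)$ with non-negligible probability no matter how close the starting points are. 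Hence $t\,P(\sigma_R\le t\wedge\tau)$ cannot be ``absorbed into a $C(1+t^2)|x-z|^{1-\varepsilon}$-type bound'' --- take $t=1$ and let $|x-z|\to0$: your left-hand side stays bounded away from $0$ while the claimed bound vanishes. No manipulation of the large- and small-$t$ regimes of the Gaussian exit bound alone can repair this, since that bound carries no dependence on $|x-z|$.

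What is missing is the two-step structure of the paper's proof. First prove the estimate only for $t\le|x-z|^{\varepsilon}$: on that time horizon the exit probability is genuinely small in $|x-z|$, e.g. by Chebyshev and Burkholder, $P(\sup_{s\le t}|X^x_s-x|>\delta_0\wedge R/2)\le C t^{1/\varepsilon}\le C|x-z|$ (the paper uses $P(\tau\ge t)\le P(\tilde\tau\ge t)+P(\sup_s|X^x_s-x|>\delta_0\wedge\frac R2)+P(\sup_s|Z^z_s-z|>\delta_0\wedge\frac R2)$ with $\delta_0$ tied to $\rho_R^{-1}$), giving $E[t\wedge\tau]\le C(1+t)|x-z|^{1-\varepsilon}$ for $t\le|x-z|^{\varepsilon}$. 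Then bootstrap to all $t$ by Chebyshev applied to $\tau$ itself:
\begin{equation*}
E[t\wedge\tau]\le E[|x-z|^{\varepsilon}\wedge\tau]+t\,P(\tau\ge|x-z|^{\varepsilon})
\le\left(1+t|x-z|^{-\varepsilon}\right)E[|x-z|^{\varepsilon}\wedge\tau],
\end{equation*}
and insert the small-time bound, relabelling $\varepsilon$ to swallow the loss $|x-z|^{-\varepsilon}$. This is the step that converts the only-small-time control of the exit event into an estimate valid for all $t$; without it your decomposition cannot close.
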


\begin{proof}
Let $R>0$ and $x,z \in B(0;R/2)$.
Define
\[
\xi _t := X_t^x-Z_t^z, \quad \alpha _t := \sigma (t,X_t^x) - \sigma (t,Z_t^z)H_t.
\]
Then, by It\^o's formula we have for $t\in [0,\tau)$
\begin{equation}\label{eq:xi}
d(|\xi _t|) = \left\langle \frac{\xi _t}{|\xi _t|}, \alpha _t dB_t \right\rangle + \frac 1{2|\xi _t|} \left( {\rm tr}( \alpha _t \alpha _t ^T) - \frac{|\alpha _t^T \xi _t|^2}{|\xi _t|^2}\right) dt
\end{equation}
where ${\rm tr}(A)$ is the trace of the matrix $A$.
Now we follow the argument in Section 3 of \cite{LiRo}.
Since
\begin{align*}
\alpha _t &= \sigma (t,X_t^x) - \sigma (t,Z_t^z) + \frac{2 \xi _t\otimes (\sigma (t,Z_t^z)^{-1} \xi _t)}{|\sigma (t,Z_t^z)^{-1} \xi _t|^2}\\
&= \sigma (t,X_t^x) - \sigma (t,Z_t^z) + \frac{2 \xi _t \xi _t^T (\sigma (t,Z_t^z)^{-1} )^T}{|\sigma (t,Z_t^z)^{-1} \xi _t|^2},
\end{align*}
it holds that
\begin{align*}
& {\rm tr}( \alpha _t \alpha _t ^T)  - \frac{|\alpha _t^T \xi _t|^2}{|\xi _t|^2} \\
& = {\rm tr}\left( [\sigma (t,X_t^x) - \sigma (t,Z_t^z)] [\sigma (t,X_t^x) - \sigma (t,Z_t^z)]^T \right) - \frac{\left| [\sigma (t,X_t^x) - \sigma (t,Z_t^z)]^T \xi _t \right| ^2}{|\xi _t|^2}.
\end{align*}
Hence, in view of  (\ref{eq:sigma}), there exists a positive constant $\gamma _1$ depending on $d$ and $\Lambda$ such that
\begin{equation}\label{eq:coup01}
\left| {\rm tr}( \alpha _t \alpha _t ^T) - \frac{|\alpha _t^T \xi _t|^2}{|\xi _t|^2}\right| \leq \gamma _1 \rho _R (|\xi _t|) ,\quad t\in [0,\tau)\ \mbox{such that}\ X_t^x, Z_t^z \in B(0;R),
\end{equation}
On the other hand, following the argument in Section 3 of \cite{LiRo}, we have a positive constant $\gamma _2$ depending on $d$ and $\Lambda$ such that
\begin{equation}\label{eq:coup02}
\frac{|\alpha _t ^T \xi _t|}{|\xi_t|} \geq \gamma_2 ^{-1} \ \mbox{for}\ t\in [0,\tau)\ \mbox{such that}\ |\sigma (t,X_t^x)-\sigma (t,Z_t^z)|\leq 2\Lambda ^{-1}.
\end{equation}
Note that if $\rho _R (|\xi _t|) \leq 2\Lambda ^{-1}$ and $X_t^x, Z_t^z\in B(0;R)$, then $|\sigma (t,X_t^x)-\sigma (t,Z_t^z)|\leq 2\Lambda ^{-1}$.
Let $\gamma := \gamma _1 \vee \gamma _2$.
Define stopping times $\tau _n$ by $\tau _n := \inf \{ t>0; |X_t^x -Z_t^z| \leq 1/n \}$ for $n\in {\mathbb N}$.
For given $\varepsilon >0$, let
\begin{align*}
\tilde \tau &:= \tau \wedge \inf \left\{ t\in [0,\infty);\ \rho _R (|\xi _t|) > \frac{\varepsilon}{2\gamma ^3} \wedge 2\Lambda ^{-1},\ X_t^x \not \in B(0;R) \ \mbox{or}\   Z_t^z \not \in B(0;R) \right\} \\
\tilde \tau _n &:= \tau _n \wedge \inf \left\{ t\in [0,\infty);\ \rho _R (|\xi _t|) > \frac{\varepsilon}{2\gamma ^3} \wedge 2\Lambda ^{-1},\ X_t^x \not \in B(0;R) \ \mbox{or}\   Z_t^z \not \in B(0;R) \right\}
\end{align*}
for $n\in {\mathbb N}$.
Then, it holds that $\tilde \tau _n \uparrow \tilde \tau$ almost surely as $n\rightarrow \infty$.
By It\^o's formula, (\ref{eq:xi}), (\ref{eq:coup01}) and (\ref{eq:coup02}), we have for $t\in [0,\infty )$
\begin{align*}
&E\left[ |\xi _{t\wedge \tilde \tau _n}|^{1-\varepsilon}\right] \\
& = |x-z|^{1-\varepsilon} + (1-\varepsilon ) E\left[ \int _0^{t\wedge \tilde \tau _n} |\xi _s|^{-\varepsilon} \frac 1{2|\xi _s|} \left( {\rm tr}( \alpha _s \alpha _s ^T) - \frac{|\alpha _s^T \xi _s|^2}{|\xi _s|^2}\right) ds\right] \\
& \quad - \frac{\varepsilon (1-\varepsilon )}{2} E\left[ \int _0^{t\wedge \tilde \tau _n} |\xi _s|^{ -1-\varepsilon} \frac{| \alpha _s^T \xi _s|^2}{|\xi _s|^2} ds\right] \\
& \leq |x-z|^{1-\varepsilon} + \frac{(1-\varepsilon )\gamma}2 E\left[ \int _0^{t\wedge \tilde \tau _n} |\xi _s|^{-1-\varepsilon} \rho _R(|\xi _s|)ds \right] - \frac{\varepsilon (1-\varepsilon )}{2\gamma ^2} E\left[ \int _0^{t\wedge \tilde \tau _n} |\xi _s|^{ -1-\varepsilon} ds\right] \\
& \leq |x-z|^{1-\varepsilon} + \frac{\varepsilon (1-\varepsilon )}{4\gamma ^2} E\left[ \int _0^{t\wedge \tilde \tau _n} |\xi _s|^{-1-\varepsilon} ds \right] - \frac{\varepsilon (1-\varepsilon )}{2\gamma ^2} E\left[ \int _0^{t\wedge \tilde \tau _n} |\xi _s|^{ -1-\varepsilon} ds\right] \\
& \leq |x-z|^{1-\varepsilon} - \frac{\varepsilon (1-\varepsilon )}{4\gamma ^2} E\left[ \int _0^{t\wedge \tilde \tau _n} |\xi _s|^{-1-\varepsilon} ds \right] \\
& \leq |x-z|^{1-\varepsilon} - \frac{\varepsilon (1-\varepsilon )}{2^{3+\varepsilon}\gamma ^2R^{1+\varepsilon}} E\left[ t\wedge \tilde \tau _n\right].
\end{align*}
Hence, it holds that
\begin{equation}\label{eq:est-nhit}
E\left[ t\wedge \tilde \tau \right] \leq C|x-z|^{1-\varepsilon} \quad \mbox{for}\ t\in [0,\infty )
\end{equation}
where $C$ is a constant depending on $d$, $\varepsilon$, $R$ and $\Lambda$.

Now we consider the estimate of the expectation of $\tau$ by using that of $\tilde \tau$.
To simplify the notation, let
\[
\delta _0 := \frac 13 \rho _R ^{-1} \left( \frac{\varepsilon}{2\gamma ^3} \wedge 2\Lambda ^{-1}\right) .
\]
Since
\begin{align*}
& |\xi _t| > 3\delta _0 \Longrightarrow |X_t^x -x| >\delta _0 ,\ |Z_t^z -z| >\delta _0,\ \mbox{or}\ |x-z|>\delta _0,\\
& X_t^x \not \in B(0;R) \ \mbox{or}\   Z_t^z \not \in B(0;R) \Longrightarrow |X_t^x -x| >\frac R2 \ \mbox{or}\ |Z_t^z -z| >\frac R2
\end{align*}
we have for $x,z \in B(0;R/2)$ such that $|x-z| \leq \delta _0$,
\begin{equation}\label{eq:estprob1-2}\begin{array}{l}
\displaystyle P(\tau \geq t) \leq P(\tilde \tau \geq t) +P\left( \sup _{s\in [0,t]} |X_s^x -x| >\delta _0 \wedge \frac R2 \right) \\
\displaystyle \hspace{6cm} +P\left( \sup _{s\in [0,t]} |Z_s^z -z| >\delta _0 \wedge \frac R2 \right) .
\end{array}\end{equation}
Let $\eta = x$ or $z$.
By Chebyshev's inequality and Burkholder's inequality we have
\begin{align*}
& P\left( \sup _{s\in [0,t]}|X_s^\eta -\eta | > \delta _0 \wedge \frac R2 \right) \\
& \leq \left(\delta _0 \wedge \frac R2\right)  ^{-2/\varepsilon}  E\left[ \sup _{s\in [0,t]}|X_s^\eta -\eta |^{2/\varepsilon} \right] \\
& \leq \left(\delta _0 \wedge \frac R2\right)  ^{-2/\varepsilon} E\left[ \sup _{s\in [0,t]}\left| \int _0^s \sigma (u,X_u^\eta )dB_u \right|^{2/\varepsilon} \right] \\
& \leq \left(\delta _0 \wedge \frac R2\right)  ^{-2/\varepsilon} C E\left[ \left( \sum _{i,j=1}^d \int _0^t \sigma _{ij}(u,X_u^\eta ) \sigma _{ji}(u,X_u^\eta ) du \right) ^{1/\varepsilon} \right] \\
&\leq d^{1/\varepsilon } \left(\delta _0 \wedge \frac R2\right)  ^{-2/\varepsilon}  C \Lambda ^{1/\varepsilon} t^{1/\varepsilon}
\end{align*}
where $C$ is a constant depending on $\varepsilon $.
Hence, there exists a constant $C$ depending on $d$, $\varepsilon$, $R$, $\rho _R$ and $\Lambda$ such that
\begin{equation}\label{eq:estprob1-1}
P\left( \sup _{s\in [0,t]}|X_s^\eta -\eta | >\delta _0 \wedge \frac R2 \right) \leq C|x-z|
\end{equation}
for $\eta = x,z$ and $t\in [0, |x-z|^\varepsilon ]$.
By (\ref{eq:est-nhit}), (\ref{eq:estprob1-2}) and (\ref{eq:estprob1-1}) we have for $x,z \in B(0;R/2)$ such that $|x-z| \leq \delta _0$, and $t\in [0, |x-z|^\varepsilon ]$
\begin{align*}
&E[t\wedge \tau] \\
&\leq \int _0^t P(\tau \geq s) ds \\
&\leq \int _0^t P( \tilde \tau \geq s) ds + t \left[ P\left( \sup _{s\in [0,t]}|X_s^x -x| >\delta _0 \wedge \frac R2 \right) + P\left( \sup _{s\in [0,t]}|X_s^z -z| >\delta _0 \wedge \frac R2 \right) \right] \\
&\leq C(1+t)|x-z|^{1-\varepsilon}
\end{align*}
where $C$ is a constant depending on $d$, $\varepsilon$, $R$, $\rho _R$ and $\Lambda$.
Therefore, we obtain
\begin{equation}\label{eq:estprob1-9}
E[t\wedge \tau] \leq C(1+t)|x-z|^{1-\varepsilon}
\end{equation}
for $x,z \in B(0;R/2)$ such that $|x-z| \leq \delta _0$, and $t\in [0, |x-z|^\varepsilon ]$.
By using Chebyshev's inequality, calculate $E[t\wedge \tau]$ as 
\begin{align*}
E[t\wedge \tau]
& = \int _0^{|x-z|^\varepsilon} P(\tau \geq s) ds +\int _{|x-z|^\varepsilon}^t P(\tau \geq s) ds\\
& \leq E[|x-z|^\varepsilon \wedge \tau] + t P(\tau \geq |x-z|^\varepsilon )\\
& \leq E[|x-z|^\varepsilon \wedge \tau] + \frac{t}{|x-z|^\varepsilon} E[ \tau \wedge |x-z|^\varepsilon] \\
&\leq \left( 1+ t|x-z|^{-\varepsilon} \right) E[|x-z|^\varepsilon \wedge \tau].
\end{align*}
Thus, applying (\ref{eq:estprob1-9}) with $t=|x-z|^\varepsilon$ and choosing another small $\varepsilon$, we obtain (\ref{eq:estprob1-0}) for all $t \in [0,\infty )$.
\end{proof}

\begin{lem}\label{lem:estprob2}
For $R>0$ and sufficiently small $\varepsilon >0$, there exist positive constants $C$ and $c_0$ depending on $d$, $\varepsilon$, $C_{\rm G}^+$, $R$, $\rho _R$ and $\Lambda$ such that
\begin{align*}
& p^X(0,x;t,y)E^{X_t^x = y}[t\wedge \tau] \leq C t^{-d/2}(1+t^2) |x-z|^{1-\varepsilon} \\
& p^X(0,z;t,y)E^{Z_t^z = y}[t\wedge \tau] \leq C t^{-d/2}(1+t^2) |x-z|^{1-\varepsilon}
\end{align*}
for $t \in (0,\infty )$, $x,z\in B(0;R/2)$ such that $|x-z| \leq c_0$, and $y\in {\mathbb R}^d$.
\end{lem}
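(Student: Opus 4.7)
The plan is to combine the conditional density formula (\ref{eq:conditional}) with the martingale structure of $p^X(\cdot,X_\cdot^x;t,y)$ to reduce the estimate to Lemma \ref{lem:estprob1}. Writing $E^{X_t^x=y}[t\wedge \tau] = \int_0^t P^{X_t^x=y}(\tau > s)\,ds$ by Fubini and applying (\ref{eq:conditional}) on the $\mathscr{F}_s$-measurable event $\{\tau > s\}$, one obtains
$$
p^X(0,x;t,y)\, E^{X_t^x=y}[t\wedge \tau] = \int_0^t E\!\left[\mathbf{1}_{\{\tau > s\}}\, p^X(s, X_s^x; t, y)\right]\,ds.
$$
I would split this integral at $s=t/2$. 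On $[0, t/2]$ the Gaussian upper bound (\ref{eq:gaussest}) combined with $t-s \geq t/2$ gives the deterministic estimate $p^X(s,X_s^x;t,y) \leq 2^{d/2}C_{\rm G}^+ t^{-d/2}$, so this range contributes at most $Ct^{-d/2}\int_0^{t/2}P(\tau>s)\,ds \leq Ct^{-d/2} E[t\wedge\tau]$, and Lemma \ref{lem:estprob1} furnishes the desired bound $Ct^{-d/2}(1+t^2)|x-z|^{1-\varepsilon}$.

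The delicate range is $[t/2, t]$, where the pointwise bound $(t-s)^{-d/2}$ fails to be integrable once $d \geq 2$. The key trick is that $M_s := p^X(s, X_s^x; t, y)$ is a non-negative $(\mathscr{F}_s)$-martingale on $[0,t)$ (a direct consequence of Chapman-Kolmogorov together with the Markov property of $X^x$), and that the event $\{\tau > s\}$ for $s \geq t/2$ is contained in the $\mathscr{F}_{t/2}$-measurable set $\{\tau > t/2\}$. Hence the tower property collapses the integrand to its value at $t/2$:
$$
E[\mathbf{1}_{\{\tau > s\}} M_s] \leq E[\mathbf{1}_{\{\tau > t/2\}}\, E[M_s \mid \mathscr{F}_{t/2}]] = E[\mathbf{1}_{\{\tau > t/2\}} M_{t/2}] \leq C t^{-d/2}\, P(\tau > t/2),
$$
the last step using the Gaussian bound for $M_{t/2}$. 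Since Chebyshev's inequality together with Lemma \ref{lem:estprob1} yield $P(\tau > t/2) \leq 2E[\tau \wedge t]/t \leq C(1+t^2)|x-z|^{1-\varepsilon}/t$, integrating over $[t/2, t]$ produces a factor $t/2$ which cancels the $1/t$, giving a contribution of order $Ct^{-d/2}(1+t^2)|x-z|^{1-\varepsilon}$.

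The second inequality (with $Z^z$ in place of $X^x$) is proved by the same argument, once one checks that $Z^z$ is an $(\mathscr{F}_s)$-Markov process with transition density $p^X$. For that I would write $Z^z$ as the strong solution of $dZ = \sigma(s,Z)\,dW$ with $W_s := \tilde B_{s\wedge\tau} + (B_s - B_{s\wedge\tau})$, and invoke Lévy's characterization: $W$ is an $(\mathscr{F}_s)$-Brownian motion because its quadratic variation is $sI$ (with $\tilde B$ contributing up to the stopping time $\tau$ and $B$ taking over afterwards), whence $Z^z$ is an $(\mathscr{F}_s)$-Markov process with kernel $p^X$. Then (\ref{eq:conditional}) applied to $Z^z$ renders the $\mathscr{F}_s$-measurable event $\{\tau > s\}$ admissible, and the same split at $t/2$ carries through verbatim. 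The main obstacle is precisely the near-$t$ singularity of $p^X(s,\cdot;t,y)$; it is the martingale identity $E[M_s \mid \mathscr{F}_{t/2}] = M_{t/2}$ that converts this singularity into a single harmless evaluation at the safer time $t/2$.
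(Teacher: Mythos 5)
Your proof is correct and follows essentially the paper's own argument: split at $t/2$, use the Gaussian upper bound of the kernel evaluated at the midpoint to kill the conditioning singularity, and control the tail by Chebyshev together with Lemma \ref{lem:estprob1}; your survival-function/martingale formulation is just a repackaging of the paper's direct decomposition of $t\wedge\tau$ on $\{\tau\le t/2\}$ and $\{\tau\ge t/2\}$ with (\ref{eq:conditional}) applied at $s=t/2$. Your explicit verification via L\'evy's characterization that $Z^z$ is an $({\mathscr F}_s)$-Markov process with transition density $p^X$ is a welcome elaboration of what the paper dismisses as a "similar argument" for the second inequality.
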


\begin{proof}
It holds that
\begin{equation}\label{eq:estprob2-01}
\displaystyle E^{X^x_t=y}\left[ t\wedge \tau \right] = E^{X^x_t=y}\left[ (t\wedge \tau) {\mathbb I}_{[0,t/2]}(\tau) \right] + E^{X^x_t=y}\left[ (t\wedge \tau) {\mathbb I}_{[t/2,\infty)}(\tau) \right] .
\end{equation}
By (\ref{eq:conditional}) and (\ref{eq:gaussest}) we have
\begin{align*}
& p^X(0,x;t,y)E^{X^x_t=y}\left[ (t\wedge \tau) {\mathbb I}_{[0,t/2]}(\tau) \right] \\
& = E\left[ (t\wedge \tau) {\mathbb I}_{[0,t/2]}(\tau)\ p^X\left( \frac t2,X^x _{t/2};t,y\right) \right]\\
& \leq 2^{d/2}C_{\rm G}^+ t^{-d/2}E\left[ t\wedge \tau \right] .
\end{align*}
Hence, in view of Lemma \ref{lem:estprob1}, there exists positive constants $C$ and $c_0$ depending on $d$, $\varepsilon$, $C_{\rm G}^+$, $R$, $\rho _R$ and $\Lambda$ such that
\begin{equation}\label{eq:estprob2-02}
p^X(0,x;t,y)E^{X^x_t=y}\left[ (t\wedge \tau) {\mathbb I}_{[0,t/2]}(\tau) \right] \leq C t^{-d/2} (1+t^2) |x-z|^{1-\varepsilon}
\end{equation}
for $x,z\in B(0;R/2)$ such that $|x-z| \leq c_0$ and $y\in {\mathbb R}^d$.

On the other hand, by (\ref{eq:conditional}) and (\ref{eq:gaussest}) we have
\begin{align*}
& p^X(0,x;t,y)E^{X^x_t=y}\left[ (t\wedge \tau) {\mathbb I}_{[t/2,\infty)}(\tau) \right] \\
& \leq t p^X(0,x;t,y) P^{X^x_t=y}\left( \tau > \frac t2 \right) \\
& = t \int _{{\mathbb R}^d} p^X\left( \frac t2,z;t,y\right) P\left(\tau > \frac t2 ,\  X_{t/2}^x \in dz \right) \\
& \leq 2^{d/2}C_{\rm G}^+ t^{-d/2+1} P\left( \tau > \frac t2 \right) .
\end{align*}
Hence, by applying Chebyshev's inequality we have
\begin{align*}
p^X(0,x;t,y)E^{X^x_t=y}\left[ (t\wedge \tau) {\mathbb I}_{[t/2,\infty)}(\tau) \right] 
& \leq C t^{-d/2} E\left[ t \wedge \tau \right]
\end{align*}
where $C$ is a constant depending on $d$ and $C_{\rm G}^+$.
Thus, Lemma \ref{lem:estprob1} implies that
\begin{equation}\label{eq:estprob2-03}
p^X(0,x;t,y)E^{X^x_t=y}\left[ (t\wedge \tau) {\mathbb I}_{[t/2,\infty)}(\tau) \right] \leq C t^{-d/2}\left( 1+t^2 \right) |x-z|^{1-\varepsilon}
\end{equation}
for $x,z\in B(0;R/2)$ such that $|x-z| \leq c_0$ where $C$ and $c_0$ are positive constants depending on $d$, $\varepsilon$, $C_{\rm G}^+$, $R$, $\rho _R$ and $\Lambda$.
Therefore, we obtain the assertion for $x$ by (\ref{eq:estprob2-01}), (\ref{eq:estprob2-02}) and (\ref{eq:estprob2-03}).
Similar argument yields the the assertion for $z$.
\end{proof}

\begin{lem}\label{lem:II}
For $q\geq 1$, $R>0$ and sufficiently small $\varepsilon >0$, there exist positive constants $C$ and $c_0$ depending on $q$, $d$, $\varepsilon$, $R$, $\rho _R$, $\Lambda$, $\| b\| _\infty$ and $\| c\| _\infty$, such that
\begin{align*}
& E\left[ \sup _{s\in [0,t]} \left| {\mathcal E}(0,\tau \wedge s; X^x) - 1 \right| ^{q} \right]  \leq C e^{Ct}|x-z|^{2/(q\vee2)-\varepsilon} ,\\
& E\left[ \sup _{s\in [0,t]} \left| {\mathcal E}(0,\tau \wedge s; Z^z) - 1 \right| ^{q} \right]  \leq C e^{Ct}|x-z|^{2/(q\vee2)-\varepsilon}
\end{align*}
for $t\in [0,\infty )$, $x,z\in B(0;R/2)$ such that $|x-z| \leq c_0$, and $y \in {\mathbb R}^d$.
\end{lem}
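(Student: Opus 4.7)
The plan is to apply the It\^o representation \ref{eq:ItoE} at the stopping time $\tau\wedge s$ to decompose
\[
\mathcal{E}(0,\tau\wedge s;X^x)-1 \,=\, M_s + A_s,
\]
where $M_s := \int_0^{\tau\wedge s}\mathcal{E}(0,u;X^x)\langle b_\sigma(u,X_u^x),dB_u\rangle$ is a local martingale with $\langle M\rangle_t = \int_0^{\tau\wedge t}\mathcal{E}^2|b_\sigma|^2du \le \|b_\sigma\|_\infty^2(\tau\wedge t)\sup_{u\le t}\mathcal{E}(0,u;X^x)^2$, and $A_s := \int_0^{\tau\wedge s}\mathcal{E}(0,u;X^x)c(u,X_u^x)du$ satisfies $\sup_s|A_s|^q\le\|c\|_\infty^q(\tau\wedge t)^q\sup_{u\le t}\mathcal{E}(0,u;X^x)^q$. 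The estimate will then follow by combining Burkholder--Davis--Gundy applied to $M$, H\"older's inequality to decouple the factors $(\tau\wedge t)$ and $\sup\mathcal{E}$, Lemma \ref{lem:estprob1} to exploit smallness of $E[\tau\wedge t]$, and a Doob-type moment bound $E[\sup_{u\le t}\mathcal{E}(0,u;X^x)^r]\le C_re^{C_rt}$ for $r\ge 1$, which is obtained by writing $\mathcal{E}^r$ as the true exponential martingale $\exp(rN_u-\tfrac{r^2}{2}\langle N\rangle_u)$ (with $N_u=\int_0^u\langle b_\sigma,dB\rangle$) multiplied by a deterministic factor bounded by $e^{Cr^2t}$, and then invoking Doob's $L^p$ inequality.

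For $q\ge 2$, BDG reduces the problem to estimating $E[(\tau\wedge t)^{q/2}\sup\mathcal{E}^q]$. Apply H\"older with conjugate exponents $p=q/2$ and $p'=q/(q-2)$ when $q>2$ (for $q=2$, take $p=1+\delta$ and $p'=(1+\delta)/\delta$, letting $\delta$ be absorbed into the final $\varepsilon$) to obtain
\[
E[(\tau\wedge t)^{q/2}\sup\mathcal{E}^q]\,\le\, E[(\tau\wedge t)^{q^2/4}]^{2/q}\,E[\sup\mathcal{E}^{q^2/(q-2)}]^{(q-2)/q}.
\]
The first factor, via the trivial bound $(\tau\wedge t)^{q^2/4}\le t^{q^2/4-1}(\tau\wedge t)$ and Lemma \ref{lem:estprob1}, is at most $C(1+t^2)^{2/q}\,t^{q/2-2/q}\,|x-z|^{2(1-\varepsilon)/q}$; the second factor is at most $Ce^{Ct}$ by the Doob estimate. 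The contribution of $A$ is handled identically with $(\tau\wedge t)^q$ in place of $(\tau\wedge t)^{q/2}$, yielding the same or smaller rate. Relabelling $\varepsilon$ gives the claimed $|x-z|^{2/q-\varepsilon}$ for $q>2$ and $|x-z|^{1-\varepsilon}$ for $q=2$.

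For $q\in[1,2)$ the claim follows from the $q=2$ case via the Lyapunov (power-mean) inequality $E[|Y|^q]\le E[|Y|^2]^{q/2}$ applied to $Y=\sup_{s\le t}|\mathcal{E}(0,\tau\wedge s;X^x)-1|$, after relabelling $\varepsilon$. The corresponding bound for $Z^z$ is proved by an identical argument: $Z^z$ satisfies an SDE with the same diffusion coefficient $\sigma$ and an analogous $\mathcal{E}(0,\cdot;Z^z)$ whose moments are controlled by Lemma \ref{lem:estE0}, and Lemma \ref{lem:estprob1} is symmetric in $x$ and $z$. The main technical obstacle is the delicate balancing of H\"older exponents: one needs $p$ large enough (i.e.\ a high power of $\tau\wedge t$) to harvest the full exponent $2/q$ from Lemma \ref{lem:estprob1}, while keeping $p'$ small enough so that the complementary $\mathcal{E}$-moment remains bounded by $e^{Ct}$. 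The choice $p=q/2$ realises exactly the stated exponent for $q>2$; the borderline case $q=2$ requires a limiting argument $p\to 1$, where the auxiliary parameter $\delta$ inflates only the $t$-dependent constant and is absorbed into the overall $e^{Ct}$ factor.
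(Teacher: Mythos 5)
Your argument for $q\ge 2$ is essentially sound and close to the paper's own: the same decomposition via (\ref{eq:ItoE}), Burkholder's inequality, and a H\"older split that pairs a \emph{first} power of $\tau\wedge t$ (fed into Lemma \ref{lem:estprob1}) with a high moment of the exponential functional. Your replacement of the paper's time-integrated moment bound (Lemma \ref{lem:estE0}) by a Doob-type maximal estimate $E[\sup_{u\le t}\mathcal{E}(0,u;X^x)^r]\le Ce^{C(1+r^2)t}$ is a harmless variant (raise to a power larger than $1$ before invoking Doob), and the bookkeeping for $q>2$ and the borderline $q=2$ case does deliver the exponents $2/q-\varepsilon$ and $1-\varepsilon$ respectively.

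The genuine gap is the case $q\in[1,2)$. The statement asserts the exponent $2/(q\vee 2)-\varepsilon=1-\varepsilon$ there, but your reduction via $E[|Y|^q]\le E[|Y|^2]^{q/2}$ only gives $\bigl(Ce^{Ct}|x-z|^{1-\varepsilon}\bigr)^{q/2}=C'e^{C't}|x-z|^{q(1-\varepsilon)/2}$, i.e.\ the exponent $q/2-\varepsilon'$, which for $q<2$ is strictly smaller than $1-\varepsilon$; since the estimate is stated for $|x-z|\le c_0$ small, a smaller exponent is a strictly weaker bound and does not imply the lemma. This is not a cosmetic loss: in the proof of Proposition \ref{prop1} the lemma is invoked with $q=2/(2-\varepsilon)\in(1,2)$, and it is exactly the full exponent $1-\varepsilon$ in that range which produces the $(1-\varepsilon)$-H\"older continuity of $p(0,x;t,y)$; with exponent $q/2$ one would only reach H\"older order about $1/2$. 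The paper avoids this loss by never raising the expectation of the stopped quantity to the power $q/2$ when $q\le 2$: after Burkholder it works with $E\bigl[\int_0^{\tau\wedge t}\mathcal{E}(0,u;X^x)^{2[(q/2)\vee 1]}du\bigr]$ raised to the power $1/[(q/2)\vee 1]$, which equals $1$ for $q\le 2$, and only then performs the $\varepsilon$-H\"older split $E\bigl[\int_0^{\tau\wedge t}\mathcal{E}^{r}du\bigr]\le E[\tau\wedge t]^{1-\varepsilon}\,E\bigl[\int_0^{t}\mathcal{E}^{r/\varepsilon}du\bigr]^{\varepsilon}$, so that Lemma \ref{lem:estprob1} enters with the full power $1-\varepsilon$. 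Any repair of your argument for $q<2$ must likewise keep the first power of $E[\tau\wedge t]$ (equivalently, must not pass through $E[\tau\wedge t]^{q/2}$ or interpolate down from the $q=2$ case), since the first-moment information of Lemma \ref{lem:estprob1} combined with Jensen alone cannot yield more than the exponent $q/2$.
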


\begin{proof}
By (\ref{eq:ItoE}) we have
\begin{align*}
& E \left[ \sup _{v\in [0,\tau \wedge t]} \left| {\mathcal E}(0,v; X^x) - 1 \right| ^{q} \right] \\
& = E \left[ \sup _{v\in [0,\tau \wedge t]} \left| \int _0^v {\mathcal E}(0,u; X^x) \langle b_\sigma (u,X_u^x), dB_u\rangle + \int _0^v {\mathcal E}(0,u; X^x) c(u,X_u^x)du \right| ^q \right] \\
& \leq C E \left[ \sup _{v\in [0,\tau \wedge t]} \left| \int _0^v {\mathcal E}(0,u; X^x) \langle b_\sigma (u,X_u^x), dB_u\rangle  \right| ^q \right] \\
& \hspace{6cm} + C E \left[ \sup _{v\in [0,\tau \wedge t]} \left| \int _0^v {\mathcal E}(0,u; X^x) c(u,X_u^x)du \right| ^q \right]
\end{align*}
where $C$ is a constant depending on $q$.
The terms of the right-hand side of this inequality are dominated as follows.
By the Burkholder's inequality and H\"older's inequality we have
\begin{align*}
& E \left[ \sup _{v\in [0,\tau \wedge t]} \left| \int _0^v {\mathcal E}(0,u; X^x) \langle b_\sigma (u,X_u^x), dB_u\rangle  \right| ^q \right] \\
& \leq C E \left[ \left( \int _0^{\tau \wedge t} {\mathcal E}(0,u; X^x)^2 \left| b_\sigma (u,X_u^x) \right| ^2 du \right) ^{q/2} \right]\\
& \leq C \Lambda ^q \| b \|_\infty ^q t^{1-{1/[(q/2) \vee 1]}}E \left[ \int _0^{\tau \wedge t} {\mathcal E}(0,u; X^x)^{2[(q/2) \vee 1]} du \right]  ^{1/[(q/2) \vee 1]}\\
& \leq C \Lambda ^q \| b \|_\infty ^q t^{1-2/[q \vee 2]} E \left[ \tau \wedge t \right] ^{2(1-\varepsilon )/(q\vee 2)} E \left[ \left( \int _0^t {\mathcal E}(0,u; X^x)^{(q \vee 2)/\varepsilon} du \right) \right]^{2\varepsilon /(q\vee 2)}
\end{align*}
where $C$ is a constant depending on $q$, and by the H\"older's inequality we have
\begin{align*}
& E \left[ \sup _{v\in [0,\tau \wedge t]} \left| \int _0^v {\mathcal E}(0,u; X^x) c(u,X_u^x)du \right|^q \right] \\
& \leq \| c\| _\infty ^q t^{1-1/q} E \left[ \int _0^{\tau \wedge t} {\mathcal E}(0,u; X^x) ^q du \right] \\
& \leq \| c\| _\infty ^q t^{1-1/q} E \left[ \tau \wedge t \right] ^{1-\varepsilon} E \left[ \int _0^t {\mathcal E}(0,u; X^x)^{q/\varepsilon}du \right] ^{\varepsilon } .
\end{align*}
Thus, applying by Lemmas \ref{lem:estE0} and \ref{lem:estprob1} to these inequalities and choosing another small $\varepsilon$, we obtain
\[
E \left[ \sup _{v\in [0,\tau \wedge t]} \left| {\mathcal E}(0,v; X^x) - 1 \right| ^q \right] \leq Ce^{Ct}|x-z|^{2/(q\vee2)-\varepsilon}
\]
where $C$ is a constant depending on $q$, $d$, $\varepsilon$, $R$, $\rho _R$, $\Lambda$, $\| b\| _\infty$ and $\| c\| _\infty$.
Similar argument yields the same estimate for $Z^z$.
\end{proof}

Now we start the proof of Proposition \ref{prop1}.
Let $t\in (0,\infty)$, $x,z\in B(0;R/2)$ such that $x\neq z$, $y \in {\mathbb R}^d$ and $s\in (t/2,t)$.
Recall that $X^z$ and $Z^z$ have the same law.
By (\ref{eq:conditional}) and (\ref{eq:lemII01})  we have
\begin{align*}
& \left| p^X(0,x;t,y)E^{X_t^x=y}\left[ {\mathcal E}(0,s; X^x) ;\ \tau \leq \frac t2 \right] - p^X(0,z;t,y) E^{Z_t^z=y}\left[ {\mathcal E}(0,s; Z^z) ;\ \tau \leq \frac t2\right] \right| \\
& =\left| E\left[ {\mathcal E}(0,s; X^x) p^X(s,X^x_s ;t,y) ;\ \tau \leq \frac t2 \right] - E\left[ {\mathcal E}(0,s; Z^z) p^X(s,Z^z_s ;t,y)  ;\ \tau \leq \frac t2 \right] \right| \\
& \leq E\left[ {\mathcal E}(0,s; Z^z) \left| p^X(s,X^x_s ;t,y) - p^X(s,Z^z_s ;t,y) \right|  ;\ \tau \leq \frac t2 \right] \\
& \quad + E\left[ \left| {\mathcal E}(0,\tau \wedge s; X^x) - {\mathcal E}(0,\tau \wedge s; Z^z) \right| {\mathcal E}(\tau \wedge s ,s; Z^z) p^X(s,X^x_s ;t,y) ;\ \tau \leq \frac t2 \right] \\
& \quad +E\left[ {\mathcal E}(0,\tau \wedge s; X^x) \left|{\mathcal E}(\tau \wedge s ,s; X^x) - {\mathcal E}(\tau \wedge s ,s; Z^z) \right| p^X(s,X^x_s ;t,y)  ;\ \tau \leq \frac t2 \right]
\end{align*}
Noting that
\[ 
X^x_s=Z^z_s\quad \mbox{for}\ s\geq \tau,
\]
we obtain
\begin{equation}\label{eq:inital01}\begin{array}{l}
\displaystyle\left| p^X(0,x;t,y)E^{X_t^x=y}\left[ {\mathcal E}(0,s; X^x) ;\ \tau \leq \frac t2 \right] \right. \\
\displaystyle \hspace{5cm} \left.- p^X(0,z;t,y) E^{Z_t^z=y}\left[ {\mathcal E}(0,s; Z^z) ;\ \tau \leq \frac t2\right] \right| \\
\displaystyle \leq E\left[ \left| {\mathcal E}(0,\tau \wedge s; X^x) - {\mathcal E}(0,\tau \wedge s; Z^z) \right| \phantom{\frac t2} \right. \\
\displaystyle \quad \hspace{5cm} \left. \times {\mathcal E}(\tau \wedge s ,s; Z^z) p^X(s,X^x_s ;t,y)  ;\ \tau \leq \frac t2 \right] .
\end{array}\end{equation}
By the triangle inequality and H\"older's inequality we obtain
\begin{align*}
& E\left[ \left| {\mathcal E}(0,\tau \wedge s; X^x) - {\mathcal E}(0,\tau \wedge s; Z^z) \right| {\mathcal E}(\tau \wedge s ,s; Z^z) p^X(s,X^x_s ;t,y)  ;\ \tau \leq \frac t2 \right] \\
& \leq E\left[ \left| {\mathcal E}(0,\tau \wedge s; X^x) - 1 \right| {\mathcal E}(\tau \wedge s ,s; Z^z) p^X(s,X^x_s ;t,y)  ;\ \tau \leq \frac t2 \right] \\
&\quad + E\left[ \left| {\mathcal E}(0,\tau \wedge s; Z^z) - 1 \right| {\mathcal E}(\tau \wedge s ,s; Z^z) p^X(s,X^x_s ;t,y)  ;\ \tau \leq \frac t2 \right] \\
& \leq \left( E\left[ \left| {\mathcal E}(0,\tau \wedge s; X^x) - 1 \right| ^{2/(2-\varepsilon )} p^X(s,X^x_s ;t,y)  ;\ \tau \leq \frac t2 \right] ^{1-\varepsilon /2} \right.\\
&\hspace{2cm} \left. + E\left[ \left| {\mathcal E}(0,\tau \wedge s; Z^z) - 1 \right| ^{2/(2-\varepsilon )} p^X(s,X^x_s ;t,y)  ;\ \tau \leq \frac t2 \right] ^{1-\varepsilon /2} \right) \\
&\hspace{1cm} \times E\left[ {\mathcal E}(\tau \wedge s ,s; Z^z) ^{2/\varepsilon} p^X(s,X^x_s ;t,y);\ \tau \leq \frac t2 \right] ^{\varepsilon /2}.
\end{align*}
Hence, by (\ref{eq:conditional}) and (\ref{eq:gaussest}) we have
\begin{align*}
& E\left[ \left| {\mathcal E}(0,\tau \wedge s; X^x) - {\mathcal E}(0,\tau \wedge s; Z^z) \right| {\mathcal E}(\tau \wedge s ,s; Z^z) p^X(s,X^x_s ;t,y)  ;\ \tau \leq \frac t2 \right] \\
& \leq \left( E^{X_t^x=y} \left[ \left| {\mathcal E}(0,\tau \wedge s; X^x) - 1 \right| ^{2/(2-\varepsilon )} ;\ \tau \leq \frac t2 \right] ^{1-\varepsilon /2} \right. \\
& \hspace{2cm} \left. + E^{X_t^x=y} \left[ \left| {\mathcal E}(0,\tau \wedge s; Z^z) - 1 \right| ^{2/(2-\varepsilon )}  ;\ \tau \leq \frac t2 \right] ^{1-\varepsilon /2} \right) \\
& \hspace{1cm} \times p^X(0,x;t,y) E^{X_t^x=y} \left[ {\mathcal E}(\tau \wedge s ,s; Z^z) ^{2/\varepsilon} ;\ \tau \leq \frac t2 \right] ^{\varepsilon /2}\\
& \leq \left( E \left[ \left| {\mathcal E}(0,\tau \wedge s; X^x) - 1 \right| ^{2/(2-\varepsilon )} p^X(t/2,X^x_{t/2} ;t,y) ;\ \tau \leq \frac t2 \right] ^{1-\varepsilon /2} \right. \\
& \hspace{2cm} \left. + E \left[ \left| {\mathcal E}(0,\tau \wedge s; Z^z) - 1 \right| ^{2/(2-\varepsilon )} p^X(t/2,X^x_{t/2} ;t,y) ;\ \tau \leq \frac t2 \right] ^{1-\varepsilon /2} \right) \\
& \hspace{1cm} \times \left( p^X(0,x;t,y) E^{X_t^x=y} \left[ {\mathcal E}(\tau \wedge s ,s; Z^z) ^{2/\varepsilon} ;\ \tau \leq \frac t2 \right] \right) ^{\varepsilon /2}\\
& \leq (C_{\rm G}^+ ) ^{1-\varepsilon /2} t^{-d/2 + d\varepsilon /4} \left( p^X(0,x;t,y) E^{X_t^x=y} \left[ {\mathcal E}(\tau \wedge s ,s; Z^z) ^{2/\varepsilon} \right] \right) ^{\varepsilon /2} \\
& \quad \times \left( E \left[ \left| {\mathcal E}(0,\tau \wedge s; X^x) - 1 \right| ^{2/(2-\varepsilon )} \right] ^{1-\varepsilon /2}  + E \left[ \left| {\mathcal E}(0,\tau \wedge s; Z^z) - 1 \right| ^{2/(2-\varepsilon )} \right] ^{1-\varepsilon /2} \right) .
\end{align*}
Applying Lemmas \ref{lem:estE0} and \ref{lem:II} to this inequality, we obtain
\begin{equation}\label{eq:inital03} \begin{array}{l}
\displaystyle E\left[ \left| {\mathcal E}(0,\tau \wedge s; X^x) - {\mathcal E}(0,\tau \wedge s; Z^z) \right| {\mathcal E}(\tau \wedge s ,s; Z^z) p^X(s,X^x_s ;t,y)  ;\ \tau \leq \frac t2 \right] \\
\displaystyle \leq C t^{-d/2} e^{Ct} |x-z|^{1-\varepsilon}
\end{array}\end{equation} 
for $x,z\in B(0;R/2)$ such that $|x-z| \leq c_0$, where $C$ and $c_0$ are constants depending on $d$, $\varepsilon$, $C_{\rm G}^+$,  $R$, $\rho _R$, $\Lambda$, $\| b\| _\infty$ and $\| c\| _\infty$.

H\"older's inequality and Chebyshev's inequality imply
\begin{align*}
E^{X_t^x=y}\left[ {\mathcal E}(0,s; X^x) ;\ \tau \geq \frac t2 \right] &\leq P^{X_t^x=y} \left( \tau \geq \frac t2\right) ^{1-\varepsilon /2} E^{X_t^x=y}\left[ {\mathcal E}(0,s; X^x)^{2/\varepsilon} \right] ^{\varepsilon /2}\\
&\leq \frac{2^{1-\varepsilon /2}}{t^{1-\varepsilon /2}} E^{X_t^x=y} \left[ \tau \wedge t \right] ^{1-\varepsilon /2} E^{X_t^x=y}\left[ {\mathcal E}(0,s; X^x)^{2/\varepsilon} \right] ^{\varepsilon /2}.
\end{align*}
Hence, by Lemmas \ref{lem:estE2} and \ref{lem:estprob2} we obtain
\begin{equation}\label{eq:inital04}
p^X(0,x;t,y) E^{X_t^x=y}\left[ {\mathcal E}(0,s; X^x) ;\ \tau \geq \frac t2 \right] \leq C t^{-d/2-1} e^{Ct} |x-z|^{1-\varepsilon}
\end{equation}
for $x,z\in B(0;R/2)$ such that $|x-z| \leq c_0$, where $C$ and $c_0$ are constants depending on $d$, $\varepsilon$, $m$, $M$, $\theta$, $R$, $\rho _R$, $\Lambda$, $\| b\| _\infty$ and $\| c\| _\infty$.
Similarly we have
\begin{equation}\label{eq:inital04-2}
p^X(0,z;t,y) E^{Z_t^z=y}\left[ {\mathcal E}(0,s; Z^z) ;\ \tau \geq \frac t2 \right] \leq C t^{-d/2-1} e^{Ct} |x-z|^{1-\varepsilon}
\end{equation}
for $x,z\in B(0;R/2)$ such that $|x-z| \leq c_0$, where $C$ and $c_0$ are constants depending on $d$, $\varepsilon$,$\gamma_{\rm G}^-$, $\gamma_{\rm G}^+$, $C_{\rm G}^-$, $C_{\rm G}^+$, $m$, $M$, $\theta$, $R$, $\rho _R$, $\Lambda$, $\| b\| _\infty$ and $\| c\| _\infty$.
Thus, (\ref{eq:fundamental}), (\ref{eq:inital01}), (\ref{eq:inital03}), (\ref{eq:inital04}) and (\ref{eq:inital04-2}) imply
\[
|p(0,x;t,y)-p(0,z;t,y)| \leq C t^{-d/2-1+\varepsilon /2} e^{Ct} |x-z|^{1-\varepsilon}
\]
for $t\in (0,\infty)$, $x,z\in B(0;R/2)$ such that $|x-z| \leq c_0$, and $y \in {\mathbb R}^d$ with constants $C$ and $c_0$ depending on $d$, $\varepsilon$, $\gamma_{\rm G}^-$, $\gamma_{\rm G}^+$, $C_{\rm G}^-$, $C_{\rm G}^+$, $m$, $M$, $\theta$, $R$, $\rho _R$, $\Lambda$, $\| b\| _\infty$ and $\| c\| _\infty$.
By (\ref{eq:gaussest}) we can remove the restriction on $|x-z|$, and therefore, we obtain Proposition \ref{prop1}.

\section{The case of general $a$ (Proof of the main theorem)}\label{sec:proof}

Let $a^{(n)}(t,x)=(a_{ij}^{(n)}(t,x))$ be the symmetric $d\times d$-matrix-valued bounded measurable functions on $[0,\infty ) \times {\mathbb R}^d$ which converge to $a(t,x)$ for each $(t,x)\in [0,\infty ) \times {\mathbb R}^d$ and satisfy (\ref{ass:a1}), (\ref{ass:a3}) and (\ref{ass:a2}).
Consider the following parabolic partial differential equation
\begin{equation}\label{PDEn}
\left\{ \begin{array}{rl}
\displaystyle \frac{\partial}{\partial t} u(t,x) & \displaystyle = \frac 12 \sum _{i,j=1}^d a_{ij}^{(n)}(t,x)\frac{\partial ^2}{\partial x_i \partial x_j} u(t,x) +  \sum _{i=1}^d b_i(t,x) \frac{\partial }{\partial x_i} u(t,x) + c(t,x) u(t,x) \!\!\!\!\!\!\!\!\!\! \\[3mm]
\displaystyle u(0,x)& =f(x)\displaystyle 
\end{array}\right.
\end{equation}
Denote the fundamental solution to (\ref{PDEn}) by $p^{(n)}(s,x;t,y)$.
From (\ref{eq:gaussest}) and Proposition \ref{lem:cpt} we have positive constants $\gamma_1$, $\gamma_2$, $C_1$ and $C_2$ depending on $d$, $\gamma_{\rm G}^-$, $\gamma_{\rm G}^+$, $C_{\rm G}^-$, $C_{\rm G}^+$, $m$, $M$, $\theta$, $\Lambda$, $\| b\| _\infty$ and $\| c\| _\infty$, such that
\begin{equation}\label{eq:gaussestn}
\frac{C_1e^{-C_1 (t-s)}}{(t-s)^{\frac d2}} \exp \left( -\frac{\gamma_1 |x-y|^2}{t-s}\right) \leq p^{(n)}(s,x;t,y) \leq \frac{C_2e^{C_2 (t-s)}}{(t-s)^{\frac d2}} \exp \left( -\frac{\gamma_2 |x-y|^2}{t-s}\right)
\end{equation}
for $s,t \in [0,\infty)$ such that $s<t$, $x,y\in {\mathbb R}^d$ and $n\in {\mathbb N}$.

It is known that the local H\"older continuity of the fundamental solution follows with the index and the constant depending only on the constants appeared in the Gaussian estimate (see \cite{St}).
This fact and (\ref{eq:gaussestn}) imply that the Arzel\'a-Ascoli theorem is applicable to $p^{(n)}$.
Moreover, in view of Proposition \ref{prop1}, there exists a constant $C$ depending on $d$, $\varepsilon$, $\gamma_{\rm G}^-$, $\gamma_{\rm G}^+$, $C_{\rm G}^-$, $C_{\rm G}^+$, $m$, $M$, $\theta$, $R$, $\rho _R$, $\Lambda$, $\| b\| _\infty$ and $\| c\| _\infty$ such that
\[
|p^{(n)}(0,x; t,y) - p^{(n)}(0,z; t,y)| \leq Ct^{-d/2-1} e^{Ct} |x-z|^{1 -\varepsilon} 
\]
for $t\in (0,\infty )$, $y\in {\mathbb R}^d$ and $x,z \in B(0;R/2)$.
Hence, there exists a continuous function $p^{(\infty)}(0,\cdot ;\cdot ,\cdot )$ on ${\mathbb R}^d \times (0,\infty) \times {\mathbb R}^d$ such that
\begin{align}
&\lim _{n\rightarrow \infty} \sup _{|x|\leq R/2} | p^{(n)}(0, x; t,y) -p^{(\infty)}(0,x; t,y)| =0 \\
& |p^{(\infty)}(0,x;t,y) - p^{(\infty)}(0,z;t,y)| \leq Ct^{-d/2-1} e^{Ct} |x-z|^{1 -\varepsilon}, \quad x,z\in B(0;R/2),
\end{align}
for $t\in (0,\infty)$ and $y\in {\mathbb R}^d$ where $C$ is a constant depending on $d$, $\varepsilon$, $\gamma_{\rm G}^-$, $\gamma_{\rm G}^+$, $C_{\rm G}^-$, $C_{\rm G}^+$, $m$, $M$, $\theta$, $R$, $\rho _R$, $\Lambda$, $\| b\| _\infty$ and $\| c\| _\infty$.
Moreover we have positive constants $C_1$, $C_2$, $\gamma _1$ and $\gamma _2$ depending on $d$, $\gamma_{\rm G}^-$, $\gamma_{\rm G}^+$, $C_{\rm G}^-$, $C_{\rm G}^+$, $m$, $M$, $\theta$, $\Lambda$, $\| b\| _\infty$ and $\| c\| _\infty$ such that
\[
\frac{C_1e^{-C_1 (t-s)}}{(t-s)^{\frac d2}} \exp \left( -\frac{\gamma_1 |x-y|^2}{t-s}\right) \leq p^{(\infty)}(s,x;t,y) \leq \frac{C_2e^{C_2 (t-s)}}{(t-s)^{\frac d2}} \exp \left( -\frac{\gamma_2 |x-y|^2}{t-s}\right)
\]
for $s,t \in [0,\infty)$ such that $s<t$, and $x,y\in {\mathbb R}^d$.
To prove Theorem \ref{thm:main} we show that $p^{(\infty)}(0,\cdot ; \cdot ,\cdot)$ coincides with the fundamental solution $p(0,\cdot ; \cdot ,\cdot)$ of the original parabolic partial differential equation (\ref{PDE}).
Let $\phi ,\psi \in C_0^\infty({\mathbb R}^d)$, and denote
\begin{align*}
P_t^{(n)} g(x) &:= \int _{{\mathbb R}^d} g(y)p^{(n)}(0,x;t,y) dy \quad \mbox{for}\ g\in C_b({\mathbb R}^d)\\
L_t^{(n)} &:= \frac 12 \sum _{i,j=1}^d a_{ij}^{(n)}(t,x)\frac{\partial ^2}{\partial x_i \partial x_j} + \sum _{i=1}^d b_i(t,x) \frac{\partial }{\partial x_i} + c(t,x) .
\end{align*}
Noting that $p^{(n)}(s,x;t,y)$ is smooth in $(s,x,t,y)$, we have $P_t^{(n)} L_t^{(n)} \phi (x) = \frac{\partial}{\partial t} P_t^{(n)} \phi (x)$.
Hence, 
\begin{align*}
& \int _{{\mathbb R}^d} \left( \int _{{\mathbb R}^d} \phi (y) p^{(n)}(0,x;t,y) dy\right) \psi (x) dx - \int _{{\mathbb R}^d} \phi (x) \psi (x) dx\\
& = \int _{{\mathbb R}^d} [ P_t^{(n)} \phi (x)] \psi (x) dx - \int _{{\mathbb R}^d} \phi (x) \psi (x) dx \\
& = \int _0^t \int _{{\mathbb R}^d} \left[ \frac{\partial}{\partial s}P_s^{(n)} \phi (x)\right] \psi (x) dx ds\\
& = \int _0^t \int _{{\mathbb R}^d} \left[ P_s^{(n)} L_s^{(n)} \phi (x)\right] \psi (x) dx ds\\
& = \int _0^t \int _{{\mathbb R}^d} \left( \int _{{\mathbb R}^d} \left[ \frac 12 \sum _{i,j=1}^d a_{ij}^{(n)}(s,y)\frac{\partial ^2}{\partial y_i \partial y_j} + \sum _{i=1}^d b_i(s,y) \frac{\partial }{\partial y_i} + c(s,y)\right] \phi (y) \right. \\
& \hspace{6cm}\left. \phantom{\left[ \frac 12 \sum _{i,j=1}^d \right]} \times p^{(n)}(0,x;s,y) dy \right) \psi (x) dx ds.
\end{align*}
Taking the limit as $n \rightarrow \infty$, we obtain
\begin{align*}
& \int _{{\mathbb R}^d} \left( \int _{{\mathbb R}^d} \phi (y) p^{(\infty)}(0,x;t,y) dy\right) \psi (x) dx - \int _{{\mathbb R}^d} \phi (x) \psi (x) dx \\
& = \int _0^t \int _{{\mathbb R}^d} \left( \int _{{\mathbb R}^d} \left[ \frac 12 \sum _{i,j=1}^d a_{ij}(s,y)\frac{\partial ^2}{\partial y_i \partial y_j} + \sum _{i=1}^d b_i(s,y) \frac{\partial }{\partial y_i} + c(s,y) \right] \phi (y) \right. \\
& \hspace{6cm}\left. \phantom{\left[ \frac 12 \sum _{i,j=1}^d \right]} \times p^{(\infty)}(0,x;s,y) dy \right) \psi (x) dx ds.
\end{align*}
This equality implies that $p^{(\infty)}(0,x;t,y)$ is also the fundamental solution to the parabolic partial differential equation (\ref{PDE}).
Since the weak solution to (\ref{PDE}) has the uniqueness, $p^{(\infty)}(0,x;t,y)$ coincides with $p(0,x;t,y)$.
Therefore, we obtain Theorem \ref{thm:main}.

\section*{Acknowledgment}
The author is grateful to Professor Felix Otto for finding a mistake and giving the author the information on Schauder's estimate.
The author also thank to the anonymous referee for careful reading, indicating mistakes of the previous version and giving the author the information of references.
This work was supported by JSPS KAKENHI Grant number 25800054.

\bibliographystyle{plain}
\bibliography{Sei010.bib}

\def\cprime{$'$} \def\cprime{$'$}
\begin{thebibliography}{10}

\bibitem{Ar}
D.~G. Aronson.
\newblock Bounds for the fundamental solution of a parabolic equation.
\newblock {\em Bull. Amer. Math. Soc.}, 73:890--896, 1967.

\bibitem{BoKrRo}
V.~I. Bogachev, N.~V. Krylov, and M.~R{\"o}ckner.
\newblock Elliptic and parabolic equations for measures.
\newblock {\em Russian Math. Surveys}, 64(6):973--1078, 2009.

\bibitem{BoRoSh}
V.~I. Bogach{\"e}v, M.~R{\"o}ckner, and S.~V. Shaposhnikov.
\newblock Global regularity and estimates for solutions of parabolic equations.
\newblock {\em Teor. Veroyatn. Primen.}, 50(4):652--674, 2005.

\bibitem{ChKu}
Zhen-Qing Chen and Takashi Kumagai.
\newblock Heat kernel estimates for stable-like processes on {$d$}-sets.
\newblock {\em Stochastic Process. Appl.}, 108(1):27--62, 2003.

\bibitem{Cr}
M.~Cranston.
\newblock Gradient estimates on manifolds using coupling.
\newblock {\em J. Funct. Anal.}, 99(1):110--124, 1991.

\bibitem{DG}
Ennio De~Giorgi.
\newblock Sulla differenziabilit\`a e l'analiticit\`a delle estremali degli
  integrali multipli regolari.
\newblock {\em Mem. Accad. Sci. Torino. Cl. Sci. Fis. Mat. Nat. (3)}, 3:25--43,
  1957.

\bibitem{Es}
Luis Escauriaza.
\newblock Bounds for the fundamental solution of elliptic and parabolic
  equations in nondivergence form.
\newblock {\em Comm. Partial Differential Equations}, 25(5-6):821--845, 2000.

\bibitem{FaKe}
Eugene~B. Fabes and Carlos~E. Kenig.
\newblock Examples of singular parabolic measures and singular transition
  probability densities.
\newblock {\em Duke Math. J.}, 48(4):845--856, 1981.

\bibitem{Fr}
Avner Friedman.
\newblock {\em Partial differential equations of parabolic type}.
\newblock Prentice-Hall Inc., Englewood Cliffs, N.J., 1964.

\bibitem{IW}
Nobuyuki Ikeda and Shinzo Watanabe.
\newblock {\em Stochastic differential equations and diffusion processes},
  volume~24 of {\em North-Holland Mathematical Library}.
\newblock North-Holland Publishing Co., Amsterdam, second edition, 1989.

\bibitem{KaSh}
Ioannis Karatzas and Steven~E. Shreve.
\newblock {\em Brownian motion and stochastic calculus}, volume 113 of {\em
  Graduate Texts in Mathematics}.
\newblock Springer-Verlag, New York, second edition, 1991.

\bibitem{Kar}
Stefan Karrmann.
\newblock Gaussian estimates for second-order operators with unbounded
  coefficients.
\newblock {\em J. Math. Anal. Appl.}, 258(1):320--348, 2001.

\bibitem{Kr}
N.~V. Krylov.
\newblock {\em Lectures on elliptic and parabolic equations in {H}\"older
  spaces}, volume~12 of {\em Graduate Studies in Mathematics}.
\newblock American Mathematical Society, Providence, RI, 1996.

\bibitem{KuSt}
S.~Kusuoka and D.~Stroock.
\newblock Applications of the {M}alliavin calculus. {II}.
\newblock {\em J. Fac. Sci. Univ. Tokyo Sect. IA Math.}, 32(1):1--76, 1985.

\bibitem{LSU}
O.~A. Lady{\v{z}}enskaja, V.~A. Solonnikov, and N.~N. Ural{\cprime}ceva.
\newblock {\em Linear and quasilinear equations of parabolic type}.
\newblock Translated from the Russian by S. Smith. Translations of Mathematical
  Monographs, Vol. 23. American Mathematical Society, Providence, R.I., 1968.

\bibitem{LM}
P.~D. Lax and A.~N. Milgram.
\newblock Parabolic equations.
\newblock In {\em Contributions to the theory of partial differential
  equations}, Annals of Mathematics Studies, no. 33, pages 167--190. Princeton
  University Press, Princeton, N. J., 1954.

\bibitem{LiRo}
Torgny Lindvall and L.~C.~G. Rogers.
\newblock Coupling of multidimensional diffusions by reflection.
\newblock {\em Ann. Probab.}, 14(3):860--872, 1986.

\bibitem{MePaRh}
G.~Metafune, D.~Pallara, and A.~Rhandi.
\newblock Global properties of transition probabilities of singular diffusions.
\newblock {\em Teor. Veroyatn. Primen.}, 54(1):116--148, 2009.

\bibitem{Na}
J.~Nash.
\newblock Continuity of solutions of parabolic and elliptic equations.
\newblock {\em Amer. J. Math.}, 80:931--954, 1958.

\bibitem{PrEi}
F.~O. Porper and S.~D. {\`E}{\u\i}del{\cprime}man.
\newblock Two-sided estimates of the fundamental solutions of second-order
  parabolic equations and some applications of them.
\newblock {\em Russian Math. Surveys}, 39(3):119--178, 1984.

\bibitem{PrEi2}
F.~O. Porper and S.~D. {\`E}{\u\i}del{\cprime}man.
\newblock Properties of solutions of second-order parabolic equations with
  lower-order terms.
\newblock {\em Trans. Moscow Math. Soc}, pages 101--137, 1993.

\bibitem{Pr}
Philip~E. Protter.
\newblock {\em Stochastic integration and differential equations}, volume~21 of
  {\em Stochastic Modelling and Applied Probability}.
\newblock Springer-Verlag, Berlin, 2005.
\newblock Second edition. Version 2.1, Corrected third printing.

\bibitem{ReYo}
Daniel Revuz and Marc Yor.
\newblock {\em Continuous martingales and {B}rownian motion}, volume 293 of
  {\em Grundlehren der Mathematischen Wissenschaften [Fundamental Principles of
  Mathematical Sciences]}.
\newblock Springer-Verlag, Berlin, third edition, 1999.

\bibitem{St}
Daniel~W. Stroock.
\newblock Diffusion semigroups corresponding to uniformly elliptic divergence
  form operators.
\newblock In {\em S\'eminaire de {P}robabilit\'es, {XXII}}, volume 1321 of {\em
  Lecture Notes in Math.}, pages 316--347. Springer, Berlin, 1988.

\bibitem{StVa}
Daniel~W. Stroock and S.~R.~Srinivasa Varadhan.
\newblock {\em Multidimensional diffusion processes}, volume 233 of {\em
  Grundlehren der Mathematischen Wissenschaften [Fundamental Principles of
  Mathematical Sciences]}.
\newblock Springer-Verlag, Berlin, 1979.

\end{thebibliography}

\end{document}